\begin{document}

\title{A new proof of the density Hales-Jewett theorem}

\author{D. H. J. Polymath}

\newtheorem{theorem}{Theorem}[section]
\newtheorem*{namedtheorem}{\theoremname}
\newcommand{\theoremname}{testing}
\newenvironment{named}[1]{\renewcommand{\theoremname}{#1} \begin{namedtheorem}}

\newtheorem{proposition}[theorem]{Proposition}
\newtheorem{lemma}[theorem]{Lemma}
\newtheorem{claim}[theorem]{Claim}
\newtheorem{corollary}[theorem]{Corollary}
\newtheorem{conjecture}[theorem]{Conjecture}
\newtheorem*{definition}{Definition}
\newtheorem{problem}[theorem]{Problem}
\newtheorem{example}[theorem]{Example}
\newtheorem{question}[theorem]{Question}
\newtheorem{remark}[theorem]{Remark}
\theoremstyle{plain}

\setlength{\textwidth}{6.5 in}
\setlength{\textheight}{9in}
\setlength{\oddsidemargin}{0in}
\setlength{\topmargin}{0in}
\addtolength{\voffset}{-.5in}

\def \url{}

%\bibliographystyle{alpha}
%\bibliography{dhj}

\begin{abstract} The Hales--Jewett theorem asserts that for every $r$
and every $k$ there exists $n$ such that every $r$-colouring of the
$n$-dimensional grid $\{1, \dotsc, k\}^n$ contains a combinatorial
line. This result is a generalization of van der Waerden's theorem,
and it is one of the fundamental results of Ramsey theory. The theorem
of van der Waerden has a famous density version, conjectured by Erd\H
os and Tur\'an in 1936, proved by Szemer\'edi in 1975, and given a
different proof by Furstenberg in 1977. The Hales--Jewett theorem has
a density version as well, proved by Furstenberg and Katznelson in
1991 by means of a significant extension of the ergodic techniques
that had been pioneered by Furstenberg in his proof of Szemer\'edi's
theorem. In this paper, we give the first elementary proof of the
theorem of Furstenberg and Katznelson, and the first to provide a
quantitative bound on how large $n$ needs to be. In particular, we
show that a subset of $\{1,2,3\}^n$ of density $\delta$ contains a
combinatorial line if $n$ is at least as big as a tower of 2s of height
$O(1/\delta^2)$. Our proof is surprisingly simple: indeed, it gives
arguably the simplest known proof of Szemer\'edi's theorem.
\end{abstract}
\maketitle
\section{Introduction}

\subsection{Statement of our main result} \label{sec:basic-statements} 

The purpose of this paper is to give the first elementary proof of the
density Hales--Jewett theorem.  This theorem, first proved by
Furstenberg and Katznelson~\cite{FK89,FK91}, has the same relation to
the Hales--Jewett theorem~\cite{HJ63} as Szemer\'edi's
theorem~\cite{Sze75} has to van der Waerden's
theorem~\cite{vdW27}. Before we go any further, let us state all four
theorems. We shall use the notation $[k]$ to stand for the set
$\{1,2,\dotsc,k\}$. If $X$ is a set and $r$ is a positive integer,
then an $r$-\textit{colouring} of $X$ will mean a function
$\kappa\colon X\rightarrow [r]$. A subset $Y$ of $X$ is called
\textit{monochromatic} if $\kappa(y)$ is the same for every $y\in Y$.

We begin with van der Waerden's theorem.

\begin{theorem} \label{thm:vdw} 
For every pair of positive integers $k$ and $r$ there exists $N$ such that for
every $r$-colouring of $[N]$ there is a monochromatic arithmetic
progression of length $k$. 
\end{theorem}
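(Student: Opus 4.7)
\emph{Proof plan.} The plan is to prove Theorem~\ref{thm:vdw} by the classical double induction of van der Waerden. Fix the number of colours $r$ and induct on $k$. The base case $k=1$ is trivial, and $k=2$ follows from pigeonhole with $N=r+1$. Assume the theorem holds for progressions of length $k-1$ with any number of colours; the aim is to prove it for length $k$ with $r$ colours.

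The central notion is that of \emph{colour-focused} progressions. Call $m$ arithmetic progressions of length $k-1$ colour-focused at a common point $f$ if each of them extends to an AP of length $k$ whose final term is $f$, each is monochromatic, and the $m$ progressions use $m$ distinct colours. The main lemma is: for every $m \le r$ there exists $N(k,r,m)$ such that every $r$-colouring of $[N(k,r,m)]$ either contains a monochromatic AP of length $k$, or contains $m$ such colour-focused APs of length $k-1$.

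I would prove this lemma by induction on $m$. The base case $m=1$ uses the outer inductive hypothesis applied to length $k-1$, together with a small amount of extra room at the end of the interval so that the chosen monochromatic AP can be extended by one further step. For the inductive step from $m$ to $m+1$, partition $[N]$ into consecutive blocks of length $L := N(k,r,m)$ and view each block as a point coloured by one of $r^L$ possible colourings. Applying the outer hypothesis (van der Waerden for length $k-1$ with $r^L$ colours) to the sequence of blocks yields an AP of blocks that are coloured identically. Inside the first of these blocks there are already $m$ colour-focused APs of length $k-1$ pointing to some focus $f$; translating them along the AP of blocks produces a new colour-focused AP of length $k-1$ whose colour must differ from all $m$ previous colours, for otherwise one immediately obtains a monochromatic AP of length $k$.

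Taking $m=r$ then finishes the theorem: among $r$ colour-focused APs of length $k-1$ (using all $r$ colours) together with their common focus $f$, the colour of $f$ matches one of the $r$ previously used colours, and the corresponding AP extends to a monochromatic AP of length $k$. The main obstacle is the book-keeping in the iterated pigeonhole argument, and in particular choosing $N(k,r,m)$ large enough at each step to accommodate both the pigeonhole on block colourings and the geometric requirement that all focused progressions fit inside $[N]$; this is what produces the notorious Ackermann-type bounds inherent in this approach.
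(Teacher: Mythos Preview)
Your sketch of the classical colour-focused double induction is correct and is the standard direct proof of van der Waerden's theorem. However, the paper does not prove Theorem~\ref{thm:vdw} at all: it is stated in the introduction purely as background, with a citation to van der Waerden's original paper. The only argument in the paper touching on this result is the remark, just after the statement of Theorem~\ref{thm:dhj}, that the Hales--Jewett theorem implies van der Waerden's theorem via the base-$k$ encoding $[k]^n \to [k^n]$, under which combinatorial lines map to arithmetic progressions of length $k$.

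So there is nothing to compare at the level of proofs: you have supplied a genuine self-contained argument where the paper simply quotes the result. If one insists on extracting a ``proof'' from the paper, it would be the indirect route through Hales--Jewett (or indeed through the density Hales--Jewett theorem proved in the paper), which is conceptually quite different from your direct combinatorial induction and yields far worse bounds than even the Ackermann-type bounds your approach gives.
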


Szemer\'edi's theorem is the \textit{density version} of van der
Waerden's theorem. That is, it says that in van der Waerden's theorem
one can always find an arithmetic progression in any colour class that 
is used reasonably often.

\begin{theorem} \label{thm:szem} 
For every
positive integer $k$ and every $\delta>0$ there exists $N$ such that
every subset $A\subseteq[N]$ of size at least $\delta N$ contains an
arithmetic progression of length $k$.  
\end{theorem}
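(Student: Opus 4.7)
The plan is to derive Szemer\'edi's theorem as a consequence of the \emph{density Hales--Jewett theorem} (henceforth DHJ), which will be stated and proved in the body of the paper: for every $k$ and every $\eta>0$, there exists $n$ such that every subset of $[k]^n$ of density at least $\eta$ contains a combinatorial line. Granting DHJ, the deduction of Szemer\'edi is a short change-of-variables argument along the summation map $\phi\colon[k]^n\to\mathbb{Z}$ defined by $\phi(x_1,\dots,x_n)=x_1+\cdots+x_n$. The crucial observation is that if $L\subseteq[k]^n$ is a combinatorial line whose wildcard set has cardinality $s\ge 1$, then $\phi(L)$ is an arithmetic progression of length $k$ and common difference $s$; hence any combinatorial line contained in $\phi^{-1}(A)$ pushes forward to a $k$-term AP contained in $A$.

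Concretely, given $A\subseteq[N]$ of density $\delta$, I would fix $n$ (to be chosen) and set $N=kn$, so that $\phi$ maps $[k]^n$ into $[N]$. The fibres of $\phi$ are not of equal size---their cardinalities follow an approximately Gaussian profile peaked near $n(k+1)/2$---so the naive pullback $\phi^{-1}(A)$ need not have density close to $\delta$. To remedy this, I would consider the two-parameter family $\phi_{a,d}(x)=a+d(x_1+\cdots+x_n)\pmod{N}$ with $a\in[N]$ and $d$ in a suitable range, which still sends combinatorial lines to $k$-APs. An averaging argument over $(a,d)$ shows that for a positive proportion of parameter pairs the pullback of $A$ has density at least $\delta/2$ in $[k]^n$; fixing such a pair and invoking DHJ with parameter $\eta=\delta/2$ then produces a combinatorial line in the pullback, which pushes forward to an arithmetic progression of length $k$ in $A$.

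The main obstacle is clearly DHJ itself: the reduction above is routine, but DHJ is substantially deeper and will require the bulk of the paper. A minor technicality is to ensure that the resulting AP genuinely lies in $[N]$ rather than wrapping around modulo $N$, and that its common difference $d\cdot s$ is nonzero; both are handled by restricting the parameter ranges appropriately and do not affect the final quantitative bound, which is essentially $N=k\cdot n_{\mathrm{DHJ}}(k,\delta/2)$. Thus any quantitative control over $n_{\mathrm{DHJ}}$ (such as the tower-type bound promised in the abstract for $k=3$) transfers directly to an effective bound in Szemer\'edi's theorem.
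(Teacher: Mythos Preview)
Your reduction of Szemer\'edi's theorem to the density Hales--Jewett theorem is correct, but the paper takes a different and simpler route. Rather than using the sum map $\phi(x)=x_1+\cdots+x_n$ and then averaging over affine reparametrisations $\phi_{a,d}$ to repair the non-uniformity of the fibres, the paper uses the base-$k$ representation map: temporarily reading $[m]$ as $\{0,1,\dots,m-1\}$, one sets $N=k^n$ and identifies each integer in $[N]$ with its string of base-$k$ digits in $[k]^n$. This is a bijection, so density is preserved exactly and no averaging is needed; a combinatorial line with wildcard set $S$ then maps to a genuine arithmetic progression of length $k$ with common difference $\sum_{i\in S}k^{n-i}$, with no wrap-around issues to address. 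Your approach still works---in fact averaging over the translation parameter $a$ alone already yields a pullback of density at least $\delta$ for some $a$, so the second parameter $d$ is superfluous---but it introduces the mod-$N$ technicalities you acknowledge without gaining anything over the base-$k$ encoding. The one apparent advantage of your map is that $N$ is linear rather than exponential in $n$; since the bounds on $n$ coming out of DHJ are already of tower type or worse, this improvement is invisible in the final quantitative statement.
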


The reason it is called a density version is that we think of $|A|/N$
as the density of $A$ inside $[N]$, so the condition on $A$ is that 
it has density at least $\delta$.

To state the Hales--Jewett theorem, we need a little more terminology.
The theorem is concerned with subsets of $[k]^n$, elements of which we
refer to as \emph{points} (or \emph{strings}). Instead of looking for
arithmetic progressions, the Hales--Jewett theorem looks for structures
known as \emph{combinatorial lines}. There are many equivalent
ways of defining these, of which one is the following. Let $[n]$ be
partitioned into sets $X_1,\dots,X_k,W$ in such a way that
$W$ is non-empty. Then take the set of all points $x$ such
that $x_i=j$ whenever $j\leq k$ and $i\in X_j$, and $x_i$ takes
the same value for every $i\in W$. The only choice we have
in specifying such an $x$ is the value we assign to the coordinates
$x_i$ with $i\in W$, so each line contains $k$ points.

Here is a simple example of a combinatorial line when $k=3$ and $n=8$:
\begin{equation*}
\{(\mathbf{1},3,\mathbf{1},2,2,\mathbf{1},1,2),(\mathbf{2},3,\mathbf{2},2,2,\mathbf{2},1,2),(\mathbf{3},3,\mathbf{3},2,2,\mathbf{3},1,2)\}
\end{equation*}
In this case the sets $X_1,X_2,X_3$ and $W$ are $\{7\}$, $\{4,5,8\}$,
$\{2\}$, and $\{1,3,6\}$, respectively.

The coordinates in $X_1\cup\dots\cup X_k$ are called the \textit{fixed
coordinates} of the line, and the coordinates in $W$ are the
\textit{variable coordinates} or \textit{wildcards}.

Another way of thinking of a line is as an element of the set $([k]\cup\{*\})^n$, 
where at least one coordinate takes the wildcard value $*$. To obtain the 
$k$ points in the line, one lets $j$ run from $1$ to $k$ and sets all
the wildcards equal to $j$. For instance, in this notation the line above is 
\begin{equation*}
(*,3,*,2,2,*,1,2).
\end{equation*}

With both these ways of thinking of combinatorial lines, it is clear that
there is a close relationship between lines in $[k]^n$ and points in 
$[k+1]^n$. Indeed, if one allows ``degenerate lines" in which the wildcard
sets are empty then there is an obvious one-to-one correspondence between
the two sets. This will be very important to us later.
\medskip

We are now ready to state the Hales--Jewett theorem.

\begin{theorem} \label{thm:hj} 
For every pair of
positive integers $k$ and $r$ there exists a positive number
HJ$(k,r)$ such that for every $n\geq$HJ$(k,r)$ and every 
$r$-colouring of the set $[k]^n$ there is a monochromatic 
combinatorial line.
\end{theorem}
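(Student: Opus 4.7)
The plan is a double induction. The outer induction is on $k$; the base case $k=1$ is trivial, since $[1]^n$ contains only the point $(1,\dots,1)$, which forms a (degenerate) monochromatic line for any non-empty wildcard set, so HJ$(1,r)=1$. Assume as outer inductive hypothesis that HJ$(k-1,r')$ is finite for every $r'$.

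For the inner induction I would prove, by induction on $m$, that there exists $n=n(m)$ such that every $r$-colouring $\kappa$ of $[k]^n$ either contains a monochromatic combinatorial line or else contains a \emph{focused family} of $m$ lines $L_1,\dots,L_m$ with pairwise disjoint wildcard sets $W_1,\dots,W_m$, all sharing a common endpoint $F$ (obtained by setting the wildcards of any $L_i$ to the value $k$), such that for each $i$ the other $k-1$ points of $L_i$ are monochromatic of a single colour $c_i$, and the colours $c_1,\dots,c_m$ are all distinct. Taking $m=r+1$, pigeonhole forces $\kappa(F)=c_i$ for some $i$, so $L_i$ together with $F$ is fully monochromatic, and one obtains HJ$(k,r)\leq n(r+1)$.

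The core step is $m-1\to m$. Set $n(m)=M+n(m-1)$ with $M=$ HJ$(k-1,\,r^{k^{n(m-1)}})$, finite by the outer hypothesis, and split the coordinates of $[k]^n$ into blocks of sizes $M$ and $n(m-1)$. Each $y\in[k-1]^M$ determines, via the first block, an $r$-colouring of $[k]^{n(m-1)}$, and we view the map $y\mapsto(\text{induced colouring})$ as a colouring of $[k-1]^M$ with at most $r^{k^{n(m-1)}}$ colours. The outer hypothesis supplies a monochromatic combinatorial line in $[k-1]^M$, with wildcard set $W\subseteq[M]$ and fixed values $z$ on $[M]\setminus W$, yielding a single induced colouring $\chi$ of $[k]^{n(m-1)}$ that is independent of which element of $[k-1]$ is assigned to $W$. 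Apply the inner hypothesis to $\chi$: either $\chi$ contains a monochromatic line (which lifts at once to one for $\kappa$), or it yields $m-1$ focused lines with focal point $F$ and distinct base colours $c_1,\dots,c_{m-1}$. Lift each $L_i$ by enlarging its wildcard set from $W_i$ to $W_i\cup W$; since $\chi$ is unchanged by any assignment in $[k-1]$ on $W$, the first $k-1$ points of the lifted line remain monochromatic of colour $c_i$. Finally adjoin one further line $L'$ whose wildcard set is $W$, positioned so that its $k$-endpoint coincides with the lifted focal point; its first $k-1$ points all share the colour $\chi(F)$. If $\chi(F)\notin\{c_1,\dots,c_{m-1}\}$, we have produced $m$ focused lines. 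Otherwise $\chi(F)=c_i$ for some $i$, and then $L_i$ together with $F$ is already monochromatic under $\chi$, giving the required monochromatic line directly.

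The main obstacle is parameter bookkeeping: the inner induction invokes the outer hypothesis at each of its $r+1$ steps with a palette exponentially larger than at the previous step, so $n(m)$ grows enormously in $m$ and the resulting dependence of HJ$(k,r)$ on $k$ is of Ackermann type. A secondary care point is verifying that the lifted families genuinely share one common $k$-endpoint and that the extra line $L'$ slots in compatibly; both of these rest on the disjointness of the wildcard sets and on the fact that $\chi$ is independent of the value placed on $W$.
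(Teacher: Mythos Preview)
The paper does not give its own proof of Theorem~\ref{thm:hj}: the Hales--Jewett theorem is stated as a classical result (with a reference to~\cite{HJ63}) and serves only as background for the density version. Within the paper's logic, Theorem~\ref{thm:hj} is an immediate corollary of Theorem~\ref{thm:dhj}: given an $r$-colouring of $[k]^n$, some colour class has density at least $1/r$, and the density Hales--Jewett theorem with $\delta=1/r$ then furnishes a line inside that class.

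Your argument is the classical colour-focusing proof, essentially that of Hales and Jewett themselves, and it is correct in outline. It is therefore a genuinely different route from the one implicit in the paper: yours is direct, elementary, and self-contained for the colouring statement, whereas the paper obtains it only as a trivial by-product of the much harder density theorem. One small slip to repair: you stipulate that the focused lines have \emph{pairwise disjoint} wildcard sets, but your lifting produces wildcard sets $W_1\cup W,\dots,W_{m-1}\cup W,\,W$, all of which contain $W$. Fortunately the disjointness is never actually used---only the common $k$-endpoint matters---so simply delete that clause from the inductive hypothesis and the argument goes through unchanged. A second minor point: $n(r)$ already suffices in place of $n(r+1)$, since once you have $r$ focused lines with $r$ distinct colours those colours exhaust the palette and $\kappa(F)$ must coincide with some $c_i$.
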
 

As with van der Waerden's theorem, we may consider the
density version of the Hales--Jewett theorem, where the density of $A
\subseteq [k]^n$ is $|A|/k^n$.  The following theorem was first
proved by Furstenberg and Katznelson~\cite{FK91}.

\begin{theorem} \label{thm:dhj} For every
positive integer $k$ and every real number $\delta>0$ there exists a positive
integer DHJ$(k,\delta)$ such that if $n\geq$DHJ$(k,\delta)$ and $A$ is
any subset of $[k]^n$ of density at least $\delta$, then $A$ contains a 
combinatorial line.  
\end{theorem}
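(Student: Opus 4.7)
The plan is to prove the theorem by induction on $k$. The case $k=1$ is trivial, and the case $k=2$ is essentially Sperner's theorem, since a combinatorial line in $[2]^n$ is precisely a comparable pair $x < y$ in the Boolean lattice $\{1,2\}^n$. For the inductive step, I would assume DHJ$(k-1,\delta)$ has been established for every $\delta>0$ and consider a putative counterexample: a set $A \subseteq [k]^n$ of density at least $\delta$ that contains no combinatorial line.

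The overall strategy is a density-increment argument, modelled on the Roth-type proof of Szemer\'edi's theorem. I would show that any such line-free $A$ must, on some combinatorial subspace $S \subseteq [k]^n$ (itself an isomorphic copy of $[k]^m$ for some large $m$), attain density at least $\delta + c(\delta)$ for some positive $c(\delta)$. A line in $S$ is in particular a line in $[k]^n$, so iterating this step at most $O(1/c(\delta))$ times must eventually force a line, provided $n$ is taken large enough at the outset to accommodate all the dimension losses along the way. This reduces everything to proving one iteration of the increment.

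The heart of the argument is the increment step, which I would approach for $k=3$ first and then extend. The target is a structural statement: any line-free $A \subseteq [3]^n$ of density $\delta$ correlates with an intersection $B \cap C$, where $B$ is \emph{$12$-insensitive} (membership is unchanged when any coordinate is swapped between the symbols $1$ and $2$) and $C$ is \emph{$23$-insensitive}. The intuition is that the three vertices of a combinatorial line differ only on the wildcard coordinates, so the triple-count $\sum 1_A(x) 1_A(y) 1_A(z)$ over lines can be opened up by Cauchy--Schwarz (or a van-der-Corput style manipulation) and, when it vanishes, yields correlation with functions that are insensitive to swapping symbols. One then argues that any intersection $B \cap C$ of a $12$-insensitive and a $23$-insensitive set can be partitioned into combinatorial subspaces, so the correlation with $B \cap C$ translates into a density increment on one of the pieces; this is where the inductive hypothesis DHJ$(k-1,\cdot)$ enters, because the partition is constructed by passing to a subspace where the two insensitive sets become controlled by lower-$k$ data. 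To make the symmetry work cleanly, it is essential to do the entire argument in the \emph{equal-slices} measure rather than the uniform measure, since under equal-slices the three vertices of a random line have the same marginal distribution.

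The main obstacle will be the correlation step itself: converting the combinatorial hypothesis ``no line in $A$'' into a genuine structural correlation with insensitive sets. Unlike in Szemer\'edi-type arguments, one cannot invoke Fourier analysis, and one has to rely on purely combinatorial manipulations together with a careful choice of measure. The extension from $k=3$ to general $k$ is a second delicate point: one must use several insensitive structures (one for each adjacent pair of symbols), and the partitioning-into-subspaces step becomes an iterative application of the inductive hypothesis that has to be engineered so that the dimension and density losses compound to a tower-type, rather than Ackermann-type, bound.
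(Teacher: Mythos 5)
Your high-level plan is the same as the paper's: induction on $k$ with Sperner as the base, a density-increment iteration, correlation of a line-free set with an intersection of "insensitive" sets, a partitioning of such intersections into combinatorial subspaces via MDHJ$_{k-1}$, and the equal-slices measure as the right measure for the symmetry. That much is correct and matches the paper. But there are three concrete gaps in the way you propose to execute the key correlation step. First, the mechanism you suggest---opening the triple count $\sum 1_A(x)1_A(y)1_A(z)$ by Cauchy--Schwarz or a van-der-Corput shift and reading off correlation with insensitive functions---is not what the paper does and has no clear route to working: there is no Fourier dual and no inverse theorem available in the DHJ setting, which is exactly why the paper instead runs a direct Ajtai--Szemer\'edi construction. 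The paper finds a "dense diagonal" (a subspace $S_{J,y}$ inside which $A$ keeps density close to $\delta$ \emph{and} is still dense on the subgrid $[k-1]^m$), sets $B=A\cap[k-1]^m$, defines $C_j=\{x: x^{k\to j}\in B\}$, and uses line-freeness plus PDHJ$_{k-1}$ to exhibit a set $C=\bigcap_j C_j$ that is dense yet essentially disjoint from $A$; a short averaging over the partition $[k]^m=\bigsqcup_i D^{(i)}$ then yields the increment. Second, and related: the correlation the paper gets is only \emph{local}, inside a carefully chosen subspace, not global. The paper explicitly points out that it does not know how to obtain a global correlation with a $12$-set from line-freeness, and your proposal as stated asserts exactly such a global correlation. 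This is a genuine missing idea, and the mechanism for handling it (Lemma \ref{densediagonal}, which trades between "density increment somewhere" and "a dense diagonal in a subspace where $A$'s density barely drops") is a key technical invention of the paper. Third, your generalization to $k\geq 4$ via "one $ij$-insensitive set for each adjacent pair of symbols" is wrong: a combinatorial line with wildcard set $\{i:y_i=k\}$ has vertices $y^{k\to 1},\dots,y^{k\to k-1},y$, so the sets that naturally arise are $jk$-insensitive for $j=1,\dots,k-1$, all sharing the symbol $k$, not adjacent pairs. With adjacent pairs there is no single string whose symbol-changes sweep out a line, so Lemma~\ref{disjointsimpleset}'s argument would not apply. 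A smaller point: the bound you hope for (tower-type for all $k$) is not what this argument gives---the iterated use of MDHJ$_{k-1}$ with rapidly shrinking densities pushes the bound up a level of the Ackermann hierarchy each time $k$ increases, and only the $k=3$ case is tower-type.
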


We sometimes write ``DHJ$_k$'' to mean the $k$ case of this theorem.
The first nontrivial case, DHJ$_2$, is a weak version of Sperner's
theorem~\cite{Spe28}; we discuss this further in
Section~\ref{sec:sperner}.  We also remark that the Hales--Jewett
theorem easily implies van der Waerden's theorem, and likewise
for the density versions.  To see this, temporarily interpret $[m]$ as
$\{0, 1, \dotsc, m-1\}$ rather than $\{1, 2, \dotsc, m\}$, and
identify integers in $[N]$ with their base-$k$ representation in
$[k]^n$. It is then easy to see that a combinatorial line in $[k]^n$ 
corresponds to an arithmetic progression of length $k$ in $[N]$: 
if the wildcard set of the line is $S$, then the common difference 
of the progression is $\sum_{i \in S} k^{n-i}$. However, only 
very few arithmetic progressions of length $k$ in $[N]$ arise
in this way, so finding combinatorial lines is strictly harder than
finding arithmetic progressions. (Further evidence for this is
that several other results are easy consequences of the 
Hales-Jewett theorem and its density version: in particular,
it is an exercise to deduce the multidimensional Szemer\'edi
theorem from DHJ.)

In this paper, we give a new, elementary proof of the density
Hales--Jewett theorem, very different from that of Furstenberg
and Katznelson (though the discovery of one part of the argument,
sketched in \S \ref{partitioning}, was in part inspired by ergodic methods).
Our proof gives rise to the first known quantitative bounds for the theorem. 
Define the \textit{tower function} $T(n)$ inductively by taking
$T(1)=2$ and $T(n)=2^{T(n-1)}$ (so for instance $T(4)=2^{2^{2^2}}=65536$).
More generally, define (not quite standardly) the $k$th function $A_k$ 
in the Ackermann hierarchy by setting $A_k(1)=2$ and 
$A_k(n)=A_{k-1}(A_k(n-1))$, with $A_1(n)=2n$. Thus, the
$k$th function is obtained by iterating the $(k-1)$st function,
so $A_2(n)=2^n$ and $A_3(n)=T(n)$.

\begin{theorem} \label{thm:our-dhj} 
In the density Hales--Jewett theorem, one may take
DHJ$_{3}{\delta} = T(O(1/\delta^2))$.  For $k \geq 4$, the
bound DHJ$_{k}{\delta}$ we achieve is broadly comparable to
the function $A_k(1/\delta)$.
\end{theorem}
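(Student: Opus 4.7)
The plan is to prove the theorem by induction on $k$, combined with a quantitative density-increment argument at each level. For the base case $k=2$, as already noted, DHJ$_2$ reduces to (a weak form of) Sperner's theorem, since a combinatorial line in $[2]^n$ is just a comparable pair under the coordinatewise partial order; a direct counting argument gives a bound polynomial in $1/\delta$. This forms the base of the Ackermann-style induction.

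For the inductive step, fix $k\geq 3$ and assume DHJ$_{k-1}$ has been established with some quantitative bound. Let $A\subseteq[k]^n$ have density at least $\delta$, and suppose for contradiction that it contains no combinatorial line. The heart of the argument is a dichotomy: either $A$ is pseudorandom enough (in a sense tailored to counting lines) that an averaging identity directly produces a line in $A$; or $A$ correlates, on a combinatorial subspace, with an \emph{$ij$-insensitive} set --- a set whose membership is invariant under swapping the symbols $i$ and $j$ in any coordinate, for some pair $i\neq j$ in $[k]$. Such insensitive sets are essentially pullbacks of subsets of $[k-1]^m$, so the inductive hypothesis DHJ$_{k-1}$ applies inside them and partitions them into combinatorial subspaces of dimension still growing in $n$. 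Averaging over this partition then locates a combinatorial subspace of $[k]^n$ on which $A$ has density at least $\delta + c\delta^{O(1)}$.

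Iterating this density-increment step polynomially many times in $1/\delta$ forces the density past $1$, yielding the required line. Each iteration replaces $n$ by roughly an iterated logarithm of $n$ (owing to the cost of invoking DHJ$_{k-1}$ inside the insensitive set), and each level of the outer induction stacks one additional iterated-logarithm onto the previous bound, producing a function of Ackermann level $k$ comparable to $A_k(1/\delta)$ for $k\geq 4$. For $k=3$, the base case is only Sperner (polynomial in $1/\delta$), so the iteration yields a tower of height $O(1/\delta^2)$, the exponent $2$ reflecting both the $\Omega(\delta^2)$ strength of the density increment and the number of iterations required.

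I expect the main obstacle to lie in the dichotomy step: quantitatively deducing correlation with an insensitive set from the failure of a pseudorandomness-style averaging count for combinatorial lines. The natural route is to expand the count of lines in $A$ as a multilinear average indexed by the wildcard set, and to identify a deficit in this count with a non-trivial correlation between $A$ and a set whose indicator factors through an $ij$-insensitive one. Achieving this with loss only polynomial in $\delta$ --- rather than exponentially small --- is what dictates the polynomial tower height for DHJ$_3$; weaker losses would inflate the bound significantly. A secondary technical issue is the choice of underlying measure on $[k]^n$: an ``equal-slices'' measure tends to make the multilinear identities cleanest, and any density increment obtained under it must then be transferred back to the uniform measure without essential loss in the iteration.
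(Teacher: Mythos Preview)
Your overall architecture---density increment, induction on $k$, correlation with insensitive sets, partitioning those sets into subspaces via the $(k-1)$-level theorem---matches the paper. But the mechanism you propose for \emph{obtaining} the correlation is not the one the paper uses, and this is the crux.

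You suggest a pseudorandomness/structure dichotomy: expand the line count as a multilinear average, and deduce from a deficit that $A$ correlates with a single $ij$-insensitive set. The paper does \emph{not} do this, and indeed you flag this step as your ``main obstacle''. The paper sidesteps it entirely by following the Ajtai--Szemer\'edi corners template. Concretely: restrict $A$ to a random $m$-dimensional subspace and look at $B=A\cap[k-1]^m$ (the ``dense diagonal''). Apply the \emph{probabilistic} DHJ$_{k-1}$ to $B$ to get a positive-density set of combinatorial lines in $B$. Each such line, viewed as a point of $[k]^m$ with a nonempty $k$-wildcard, is \emph{forbidden} from $A$ (it would complete a line). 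The set of forbidden points is exactly $C_1\cap\dots\cap C_{k-1}$ where $C_j=\{x:x^{k\to j}\in B\}$ is $jk$-insensitive. So one obtains, directly and without any counting identity, a dense \emph{intersection} of $k-1$ insensitive sets nearly disjoint from $A$; complementation then gives the density increment on another such intersection. No pseudorandomness notion is needed, and the loss is polynomial in $\delta$ automatically.

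Two further points of divergence. First, the structured object is an \emph{intersection} of $k-1$ many $jk$-insensitive sets, not a single $ij$-insensitive set; the partitioning lemma must handle this, and the paper does so by iterating the single-set partitioning $k-1$ times. Second, the partitioning of a $jk$-insensitive set into subspaces uses the \emph{multidimensional} DHJ$_{k-1}$ (MDHJ$_{k-1}$), not plain DHJ$_{k-1}$; this is where the Ackermann-type cost enters, since MDHJ$_{k-1}(d,\delta)$ is obtained by iterating DHJ$_{k-1}$ roughly $d$ times. For $k=3$ one has explicit polynomial bounds for both PDHJ$_2$ (via probabilistic Sperner, giving $\theta\approx\delta^2$) and MDHJ$_2$ (via the Gunderson--R\"odl--Sidorenko-type bound $n\gtrsim\delta^{-2^d}$), and it is the latter that produces a fixed number of exponentials per iteration; combined with $O(1/\delta^2)$ iterations (the increment is actually $\Omega(\delta^3)$, but the iteration count telescopes to $O(1/\delta^2)$), this yields the tower of height $O(1/\delta^2)$.
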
 

By ``broadly comparable" we mean something like that it is 
much nearer to $A_k(1/\delta)$ than to $A_{k+1}(1/\delta)$. In
fact, the bound we obtain is something like $A_k(A_{k-1}(1/\delta))$.
(To give an idea, if we were to apply a composition of this kind to the 
function $A_{k-1}(n)=2^n$, then $A_k(n)$ would be a tower
of height $n$, whereas $A_k(A_{k-1}(n))$ would be a tower of 
height $2^n$.)

Another way of phrasing our result is in terms of the number
$c_{n,3}$, the cardinality of the largest subset of $[3]^n$ without a
combinatorial line.  Theorem~\ref{thm:our-dhj} states that
$c_{n,3}/3^n \leq O(1/\sqrt{\log^* n})$.  The only known lower bounds
appear in a parallel paper to this one that is by an overlapping set of
authors~\cite{Pol09}: in that paper it is shown that $c_{n,3} =
2,6,18,52,150,450$ for $n = 1,2,3,4,5,6$, and for large $n$ that
$c_{n,3}/3^n \geq \exp(-O(\sqrt{\log n}))$.  Generalizing to DHJ$_k$,
the authors show that $c_{n,k}/k^n \geq \exp(-O(\log n)^{1/\lceil
\log_2 k \rceil})$, using ideas from recent work on the construction
of Behrend~\cite{Beh46}.\\

\subsection{The motivation for finding a new proof.} 

Why is it interesting to give a new proof
of the density Hales--Jewett theorem? There are two main reasons. The
first is connected with the history of results and techniques in this
area. One of the main benefits of Furstenberg's proof of Szemer\'edi's
theorem was that it introduced a technique---ergodic methods---that
could be developed in many directions, which did not seem to be the
case with Szemer\'edi's proof. As a result, several far-reaching
generalizations of Szemer\'edi's theorem were
proved~\cite{BL96,FK78,FK85,FK91}, and for a long time nobody could
prove them in any other way than by using Furstenberg's methods. In
the last few years that has changed, and a programme has developed to
find new and finitary proofs of the results that were previously known
only by infinitary ergodic methods; see,
e.g.,~\cite{RS04,NRS06,RS06,RS07a,RS07b,Gow06,Gow07,Tao06,Tao07}. Giving
a non-ergodic proof of the density Hales--Jewett theorem was seen as a
key goal for this programme, especially since Furstenberg and
Katznelson's ergodic proof seemed significantly harder than the
ergodic proof of Szemer\'edi's theorem.  Having given a purely
finitary proof, we are able to obtain explicit bounds for how large
$n$ needs to be as a function of $\delta$ and $k$ in the density
Hales--Jewett theorem.  Such bounds could not be obtained via the
ergodic methods even in principle, since these proofs rely on the
Axiom of Choice.  Admittedly, our explicit bounds are not particularly
good: we start with a tower-type dependence for $k = 3$ and go up
a level of the Ackermann hierarchy each time we go from $k$ to 
$k+1$. However, they are in line with several other bounds in 
the area. For example, the best known bounds for the multidimensional 
Szemer\'edi theorem~\cite{Gow07,NRS06} (which is an easy consequence
of DHJ) are also of this type. 

A second reason that a new proof of the density Hales--Jewett theorem
is interesting is that it immediately implies Szemer\'edi's theorem,
and finding a new proof of Szemer\'edi's theorem seems always to be
illuminating---or at least this has been the case for the four main
approaches discovered so far (combinatorial~\cite{Sze75},
ergodic~\cite{Fur77,FKO82}, Fourier~\cite{Gow01}, hypergraph
removal~\cite{Gow06,Gow07,RS04,NRS06}). Surprisingly, in view
of the fact that DHJ is considerably more general than Szemer\'edi's 
theorem and the ergodic-theory proof of DHJ is considerably more
complicated than the ergodic-theory proof of Szemer\'edi's theorem,
the new proof we have discovered gives arguably the simplest proof
yet known of Szemer\'edi's theorem. It seems that by looking at a more general
problem we have removed some of the difficulty. Related to this is
another surprise. We started out by trying to prove the first
difficult case of the theorem, DHJ$_3$. The experience of all four of
the earlier proofs of Szemer\'edi's theorem has been that interesting
ideas are needed to prove results about progressions of length $3$,
but significant extra difficulties arise when one tries to generalize
an argument from the length-$3$ case to the general
case. Unexpectedly, it turned out that once we had proved the case
$k=3$ of the density Hales--Jewett theorem, it was straightforward to
generalize the argument to the $k \geq 4$ cases. We do not fully
understand why our proof should be different in this respect, but
it is perhaps a sign that the density Hales-Jewett theorem is at
a ``natural level of generality".

One might ask, if this is the case, why the proof of Furstenberg and
Katznelson seems to be \textit{more} complicated than the 
ergodic-theoretic proofs of Szemer\'edi's theorem and its
multidimensional version. An explanation for this discrepancy
is that our proof appears to be genuinely different from theirs (that
is, not just a translation of their proof into a more elementary language).
The clearest sign of this is that they use Carlson's theorem,
a powerful result in Ramsey theory, in an essential way, whereas
we have no need of any colouring results in our argument (unless
you count the occasional use of the pigeonhole principle).

Before we start working towards the proof of the theorem, we would
like briefly to mention that it was proved in a rather unusual ``open
source" way, which is why it is being published under a pseudonym. The
work was carried out by several researchers, who wrote their thoughts,
as they had them, in the form of blog comments at
\url{http://gowers.wordpress.com}.  Anybody who wanted to could
participate, and at all stages of the process the comments were fully
open to anybody who was interested.  (Indeed, taking some inspiration
from a few of these blog comments, Austin provided \emph{another} new
(ergodic) proof of the density Hales--Jewett theorem~\cite{Aus09}.)
This open process was in complete contrast to the usual way that
results are proved in private and presented in a finished form. The
blog comments are still available, so although this paper is a
polished account of the DHJ$_k$ argument, it is possible to read a
record of the entire thought process that led to the proof. The
constructions of new \emph{lower} bounds for the DHJ$_k$ problem,
mentioned in Section~\ref{sec:basic-statements}, are being published
by a partially overlapping set of researchers~\cite{Pol09}. The
participants in the project also created a wiki,
http://michaelnielsen.org/polymath1/, which contains sketches of
the arguments, links to the blog comments, and a great deal of related
material.

\subsection{Combinatorial subspaces and multidimensional DHJ}

We know from the density Hales-Jewett theorem that dense subsets of 
$[k]^n$ contain combinatorial lines. It is natural to wonder whether
there is a higher-dimensional version of this result, in which one
finds $d$-dimensional subspaces. Such a result does indeed exist,
and is a straightforward consequence of DHJ, as was observed
by Furstenberg and Katznelson. Since we shall need this extension,
we briefly define the relevant concepts and give the proof.

A $d$-dimensional combinatorial subspace is just like a combinatorial 
line except that there are $d$ wildcard sets instead of just one. In
other words we partition the ground set $[n]$ into $k+d$ sets
$X_1,\dots,X_k,W_1,\dots,W_d$ such that $W_1,\dots,W_d$ are
non-empty, and the subspace consists of all sequences $x$ such 
that $x_i=j$ whenever $i\in X_j$ and $x$ is constant on each set 
$W_r$. There is an obvious isomorphism between $[k]^d$ and any 
$d$-dimensional combinatorial subspace: the sequence
$z=(z_1,\dots,z_d)$ is sent to the sequence $x$ such that $x_i=j$ 
whenever $i\in X_j$ and $x_i=z_r$ whenever $x\in W_r$. 

Note that there is an obvious injection from the set of all $d$-dimensional
combinatorial subspaces of $[k]^n$ to $[k+d]^n$ (which becomes a
bijection if one allows the subspaces to be degenerate).

The multidimensional density Hales-Jewett theorem is the following.

\begin{theorem} \label{mdhj}
For every $\delta>0$ and every pair of integers $k$ and $d$ there exists
a positive integer MDHJ$(k,d,\delta)$ such that, for every $n\geq$MDHJ$(k,d,\delta)$
and every subset $A\subset[k]^n$, $A$ contains a $d$-dimensional
combinatorial subspace of $[k]^n$.
\end{theorem}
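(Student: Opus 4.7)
The plan is to deduce MDHJ from ordinary DHJ (Theorem~\ref{thm:dhj}) by passing to a larger alphabet. The central observation is that when $d \mid n$ there is a natural density-preserving bijection $[k]^n \cong [k^d]^{n/d}$, obtained by grouping coordinates into blocks of length $d$ and identifying a $d$-tuple over $[k]$ with a single symbol from $[k^d]$. Under this bijection, a combinatorial line over the alphabet $[k^d]$ maps to a $d$-dimensional combinatorial subspace over $[k]$, so the whole theorem will follow by invoking Theorem~\ref{thm:dhj} with alphabet size $k^d$.

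To verify the correspondence I would just unwind the definitions. A line in $[k^d]^m$ is given by a partition $X_1, \ldots, X_{k^d}, W$ of $[m]$ with $W$ non-empty; its $k^d$ points agree off the block-coordinates indexed by $W$, and on each such block all $k^d$ tuples in $[k]^d$ appear as the line parameter varies. Translating to $[k]^{md}$, the variable block-coordinates split into $d$ wildcard sets $W_1, \ldots, W_d$ (one for each position within a block), and as the line parameter $j \in [k^d]$ is identified with $(z_1,\ldots,z_d) \in [k]^d$, the coordinates on $W_r$ take the common value $z_r$ while everything else is fixed. This is precisely a $d$-dimensional combinatorial subspace, and the non-degeneracy condition transfers correctly because $W \neq \emptyset$ is equivalent to every $W_r \neq \emptyset$.

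The proof then finishes as follows. Given $A \subseteq [k]^n$ of density $\delta$, set $m = \lfloor n/d \rfloor$. If $d \nmid n$, a routine averaging argument over fixings of the last $n - md$ coordinates yields a residual set $A' \subseteq [k]^{md}$ of density at least $\delta$; view $A'$ as a subset of $[k^d]^m$ and apply Theorem~\ref{thm:dhj} with alphabet $[k^d]$ and density $\delta$. The resulting combinatorial line in $A'$ corresponds via the bijection to a $d$-dimensional combinatorial subspace contained in $A$, giving the bound $\mathrm{MDHJ}(k,d,\delta) \leq d \cdot \mathrm{DHJ}(k^d, \delta) + d$. No serious obstacle arises: the whole argument is the block-identification bijection together with the elementary averaging step to handle divisibility.
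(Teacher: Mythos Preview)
Your argument is correct and rather elegant: the block-coding bijection $[k]^{md}\cong[k^d]^m$ does carry a combinatorial line over the alphabet $[k^d]$ to a $d$-dimensional combinatorial subspace over $[k]$, exactly as you describe, and the averaging step to dispose of the residue $n-md$ coordinates is routine.

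However, your route is genuinely different from the paper's, and the difference matters for the rest of the argument. The paper proves MDHJ$_k$ by induction on $d$, using only DHJ$_k$ for the \emph{same} alphabet size $k$ at every step (split $[k]^n=[k]^m\times[k]^{n-m}$, find $(d-1)$-dimensional subspaces in fibres by induction, pigeonhole to a common one, then apply DHJ$_k$ in the second factor to extend by one more dimension). Your proof instead invokes DHJ for the much larger alphabet $k^d$. As a freestanding deduction of Theorem~\ref{mdhj} from the full density Hales--Jewett theorem this is fine and arguably cleaner. But the paper needs the sharper statement ``MDHJ$_k$ follows from DHJ$_k$'' (Proposition~\ref{prop:mdhj-from-dhj}), because its main inductive proof of DHJ$_k$ uses MDHJ$_{k-1}$ as an ingredient. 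With your reduction, proving DHJ$_k$ would require DHJ$_{(k-1)^d}$ for unboundedly large $d$, and the induction on $k$ would not close. So: correct proof of the theorem as stated, but it does not supply what the paper actually needs downstream, and the resulting quantitative bound $\mathrm{MDHJ}(k,d,\delta)\le d\cdot\mathrm{DHJ}(k^d,\delta)+d$ lives much higher in the Ackermann hierarchy than the paper's iterated-DHJ$_k$ bound.
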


We shall refer to this theorem as MDHJ, and for each $k$ we shall refer
to the result for that $k$ as MDHJ$_k$.

\begin{proposition} \label{prop:mdhj-from-dhj} 
For every $k$, MDHJ$_k$ follows from DHJ$_k$. 
\end{proposition}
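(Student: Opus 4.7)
The plan is to prove MDHJ$_k$ by induction on the dimension $d$, with base case $d=1$ being exactly DHJ$_k$. The inductive step from $d$ to $d+1$ is obtained by splitting the coordinate set into two disjoint blocks, applying the inductive hypothesis on one block while gluing together the resulting $d$-dimensional subspaces across the other block by an averaging and pigeonhole argument, and finally extracting a line there via DHJ$_k$.

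For the inductive step, suppose $A\subseteq[k]^n$ has density $\delta$. Partition $[n]$ into two disjoint blocks $B_1$ of size $N$ and $B_2$ of size $n-N$, where $N:=\mathrm{MDHJ}(k,d,\delta/2)$. For each $y\in[k]^{B_2}$, let $A_y:=\{x\in[k]^{B_1}:(x,y)\in A\}$. A simple averaging argument shows that the set of ``good'' $y$'s, namely those for which $A_y$ has density at least $\delta/2$ in $[k]^{B_1}$, itself has density at least $\delta/2$ in $[k]^{B_2}$. The inductive hypothesis then supplies, for each good $y$, a $d$-dimensional combinatorial subspace $S_y\subseteq A_y$. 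Since (by the injection into $[k+d]^N$ mentioned earlier in this subsection) the number of $d$-dimensional combinatorial subspaces of $[k]^N$ is at most $(k+d)^N$, pigeonhole produces a single $d$-dimensional subspace $S\subseteq[k]^{B_1}$ for which
\begin{equation*}
Y:=\{y\in[k]^{B_2}:S_y=S\}
\end{equation*}
has density at least $\delta':=(\delta/2)(k+d)^{-N}$ in $[k]^{B_2}$.

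Choose $n$ so that $n-N\geq \mathrm{DHJ}(k,\delta')$. Then DHJ$_k$ applied to $Y$ yields a combinatorial line $\ell\subseteq Y$ whose wildcard set lies in $B_2$. Since the wildcard sets of $S$ lie in $B_1$ and are therefore disjoint from the wildcard set of $\ell$, the set
\begin{equation*}
\{(S(z),\ell(w)):z\in[k]^d,\ w\in[k]\},
\end{equation*}
where $S(z)$ and $\ell(w)$ denote the points of $S$ and $\ell$ indexed by $z$ and $w$ respectively, is a genuine $(d+1)$-dimensional combinatorial subspace of $[k]^n$. By the definitions of good $y$, of $Y$, and of $\ell$, every one of its points lies in $A$; this completes the induction and yields the recursion $\mathrm{MDHJ}(k,d+1,\delta)\leq N + \mathrm{DHJ}(k,\delta')$.

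The main subtlety to watch is the density bookkeeping through the pigeonhole step, since $\delta'$ is exponentially small in $N$; but the bound is only used as an input to DHJ$_k$, and raising $n$ accordingly costs nothing conceptually. The only other thing to verify is that the amalgamation of $S$ and $\ell$ is really a combinatorial subspace, which is immediate once one observes that the wildcards live in the disjoint coordinate blocks $B_1$ and $B_2$. Everything else in the argument reduces to a direct appeal either to the inductive hypothesis or to DHJ$_k$ itself.
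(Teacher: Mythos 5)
Your proof is correct and follows essentially the same route as the paper: induction on dimension, splitting coordinates into a small block and a large tail, an averaging argument to find a dense set of good fibres, pigeonhole over the (at most $(k+d)^N$) candidate $d$-dimensional subspaces, and a final application of DHJ$_k$ to extract a line to pair with the fixed subspace. The only cosmetic difference is that the paper takes $G_\sigma$ to be the set of all $y$ for which $\sigma\subseteq A_y$ rather than those with $S_y=\sigma$, but this changes nothing.
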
 

\begin{proof} 
We prove the result by induction on $d$. Suppose we know MDHJ$_k$ for dimension 
$d-1$, and let $A \subseteq [k]^n$ have density at least $\delta$. Let 
$m = $MDHJ$(k,d-1,\delta/2)$, and write a typical string $z \in [k]^n$ as 
$(x,y)$, where $x \in [k]^m$ and $y \in [k]^{n-m}$. Call a string $y \in [k]^{n-m}$ 
``good if $A_y = \{x \in [k]^m : (x,y) \in A\}$ has density at least $\delta/2$ within 
$[k]^m$. Let $G \subseteq [k]^{n-m}$ be the set of good $y$'s. Then the density 
of $G$ within $[k]^{n-m}$ must be at least $\delta/2$, or $A$ could not have 
density at least $\delta$ in $[k]^{n-m}$.

By induction, for any good $y$ the set $A_y$ contains a $(d-1)$-dimensional 
combinatorial subspace. There are at most $M = (k+d-1)^m$ such subspaces,
because of the injection mentioned above. Therefore, there must be some
subspace $\sigma \subseteq [k]^m$ such that the set 
\[ G_\sigma = \{y \in [k]^{n-m} : (x,y) \in A\ \forall x \in \sigma\} \] 
has density at least $(\delta/2)/M$ within $[k]^{n-m}$. Provided that
$n \geq m + $DHJ$(k,\delta/2M)$, we may conclude from DHJ$_k$ that 
$G_\sigma$ contains a combinatorial line, $\lambda$. Then 
$\sigma \times \lambda$ is the desired $d$-dimensional subspace of
$[k]^n$ that is contained in $A$. 
\end{proof}

Because we have to iterate DHJ$_k$ with rapidly decreasing 
densities in order to obtain this result, the bound that we get from it
is very bad indeed: it is this that causes the Ackermann-type dependence
on $k$ in our main theorem.

\subsection{Density-increment strategies} \label{densityincrement}

Very briefly, our proof of DHJ$_k$
follows a \emph{density-increment strategy}, a technique that was pioneered by
Roth~\cite{Rot53} in his proof of the $k = 3$ case of Szemer\'edi's
theorem. There are now many such proofs in the literature, of which
most have the following form. One would like to prove that every dense
subset $A$ of a mathematical structure $S$ (such as an arithmetic progression
or the set $[k]^n$) contains a subset $X$ of a certain type (such
as a subprogression of length $k$ or a combinatorial line). It is usually hard
to show this in one step, so instead one proves that if $A$ has density
$\delta$ in $S$ and does \textit{not} contain a subset of the desired kind, 
then $S$ has a substructure $S'$ such that the density of $A$ inside
$S'$ is at least $\delta+c$, where $c$ is some positive constant that
depends only on $\delta$. This is the density increment. If $S'$ is 
of a similar nature to $S$, then one
can iterate this argument, and if $S$ is large enough, then one can
continue iterating until the density exceeds 1 and one has a contradiction,
from which one deduces that $A$ must after all contain a subset $X$ of
the desired kind.

Even getting directly from $S$ to a density increment on a substructure
$S'$ in one step is usually too hard, so typically there is an intermediate
stage. First, one finds a set $T$ that is in some sense ``simple" such
that the density of $A$ inside $T$ is at least $\delta+c$. Then one
proves that ``simple" sets $T$ can be partitioned into substructures 
$S_1,\dots,S_N$ and uses an averaging argument to show that
the density of $A$ inside some $S_i$ is also at least $\delta+c$.
There are also variants of this: for instance, it is enough to find 
subsets $S_1,\dots,S_N$ of $T$ such that every element of $T$ is 
in the same number of $S_i$, or even in approximately the same
number of $S_i$. 

A few proofs that have this basic structure are Roth's proof itself
(where the intermediate structure is a mod-$N$ arithmetic
progression, which can be partitioned into genuine arithmetic
progressions), Gowers's proof of Szemer\'edi's theorem~\cite{Gow01},
and an argument of Shkredov~\cite{Shk06b,Shk06a} that gives 
strong bounds for the ``corners problem", a result that we shall
discuss in detail in Section \ref{corners}.

\section{Sperner's theorem and its multidimensional version} \label{sec:sperner}

The case $k=2$ of the density Hales-Jewett theorem is equivalent
to the following statement: for every $\delta>0$ there exists $n$ 
such that if $\mathcal{A}$ is a collection of at least $\delta 2^n$
subsets of $[n]$ then there exist distinct sets $A,B\in\mathcal{A}$
such that $A\subset B$. The equivalence is easily seen if one
looks at the characteristic functions of the sets, in which case one
sees that a pair $(A,B)$ with $A\subset B$ corresponds to a
combinatorial line in $\{0,1\}^n$.

Exact bounds are known for this theorem. The nicest proof is the 
following one, which will have a considerable influence on our later 
proofs. Recall that an \textit{antichain} is a collection of sets such
that no set in the collection is a proper subset of any other.

\begin{theorem} \label{sperner}
Let $n$ be a positive integer and let $\mathcal{A}$ be an antichain
of subsets of $[n]$. Then $|\mathcal{A}|\leq\binom n{\lfloor n/2\rfloor}$.
\end{theorem}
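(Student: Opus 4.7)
The plan is to use the elegant chain-counting argument of Lubell. The overall strategy is to establish the stronger LYM inequality $\sum_{A \in \mathcal{A}} 1/\binom{n}{|A|} \leq 1$, from which the stated bound follows immediately: replacing each denominator by the maximum value $\binom{n}{\lfloor n/2\rfloor}$ gives $|\mathcal{A}| \leq \binom{n}{\lfloor n/2\rfloor}$.

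To prove LYM, I would pick a uniformly random permutation $\pi$ of $[n]$ and look at the associated maximal chain $C_0 \subsetneq C_1 \subsetneq \cdots \subsetneq C_n$, where $C_i = \{\pi(1), \ldots, \pi(i)\}$. For any fixed set $A \subseteq [n]$ of size $k$, the event that $C_k = A$ occurs precisely when $\pi$ lists the elements of $A$ in some order followed by the elements of $[n] \setminus A$ in some order, so $\Pr[C_k = A] = k!(n-k)!/n! = 1/\binom{n}{k}$. Summing over $A \in \mathcal{A}$, the expected number of members of $\mathcal{A}$ that lie on the random chain is exactly $\sum_{A \in \mathcal{A}} 1/\binom{n}{|A|}$.

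The antichain hypothesis now supplies the key combinatorial constraint: any maximal chain in the Boolean lattice is totally ordered by inclusion, so it cannot contain two distinct members of $\mathcal{A}$ (one would be a proper subset of the other). Hence the random chain meets $\mathcal{A}$ in at most one set, the expectation is at most $1$, and we are done.

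There is no genuinely hard step here; the only conceptual subtlety is noticing that the antichain property translates exactly into the "at most one hit" event that allows the expectation to be bounded by $1$. I would favour this probabilistic chain-counting argument over alternatives such as symmetric chain decomposition or a shifting/compression argument because it foreshadows a motif that appears repeatedly later in the paper: sample a random substructure, take an expectation over a dense set, and use a simple combinatorial obstruction to turn an averaging statement into a sharp bound.
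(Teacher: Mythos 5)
Your proof is correct and is essentially the same random-permutation (Lubell/LYM) argument the paper uses: the paper picks a uniformly random $m\in\{0,\dots,n\}$ and looks at the single set $\{\pi(1),\dots,\pi(m)\}$, whereas you take the expectation of the number of members of $\mathcal{A}$ along the whole chain, but both reduce to the observation that an antichain meets a maximal chain at most once and that $\Pr[C_k=A]=1/\binom{n}{k}$. The paper phrases its version deliberately in terms of the ``random permutation plus random level'' sampling because this is exactly the equal-slices measure that drives the rest of the argument, which matches the motif you identified.
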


\begin{proof}
Consider the following way of choosing a random subset of $[n]$.
One chooses a random permutation $\pi$ of $[n]$ and a random
integer $m\in\{0,1,\dots,n\}$ and takes the set $A=\{\pi(1),\dots,\pi(m)\}$.
Since $\mathcal{A}$ is an antichain, for each $\pi$ there is at most one 
$m$ such that the resulting set belongs to $\mathcal{A}$. Thus, the
probability of choosing a set in $\mathcal{A}$ is at most $1/(n+1)$.

Now the probability of choosing a particular set $A$ of size $m$ is
$(n+1)^{-1}\binom nm^{-1}$. Therefore, if we want $\mathcal{A}$
to be as large as possible but for the probability of choosing a
set in $\mathcal{A}$ to be at most $(n+1)^{-1}$, then we must 
choose $\mathcal{A}$ to consist of sets of size $m$ such that
$\binom nm$ is maximized. It follows that we cannot choose more
than $\binom n{\lfloor n/2\rfloor}$ sets, as claimed.
\end{proof}

We shall also need a multidimensional version of Sperner's
theorem. This time we are trying to maximize the size of $\mathcal{A}$
subject to the condition that it is not possible to find a $d$-dimensional
combinatorial subspace, which in set-theoretic terms means a 
collection of disjoint non-empty sets $A,A_1,\dots,A_d$ such that
$A\cup\bigcup_{i\in E}A_i\in\mathcal{A}$ for every 
$E\subset\{1,2,\dots,d\}$. The result we need was proved by
Gunderson, R\"odl and Sidorenko. However, for the convenience
of the reader we give a proof here, which is somewhat simpler
than theirs and gives a slightly better bound. (This improvement
has an imperceptible effect on our bound for DHJ$_3$ though.)

We begin with an easy and standard lemma. As usual, if $X$ is
a finite set and $Y$ is a subset of $X$, we write $\mu(Y)$ for
$|Y|/|X|$.

\begin{lemma} \label{intersections}
Let $X$ be a finite set and let $X_\gamma$ be a random subset
of $X$, where $\gamma$ is an element of a probability space
$\Gamma$. Suppose that $\mathbb{E}_\gamma\mu(X_\gamma)=\delta$.
Now let $\gamma$ and $\gamma'$ be chosen independently from
$\Gamma$. Then $\mathbb{E}_{\gamma,\gamma'}
\mu(X_\gamma\cap X_{\gamma'})\geq\delta^2$.
\end{lemma}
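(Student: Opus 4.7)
The plan is to expand both $\mu(X_\gamma)$ and $\mu(X_\gamma \cap X_{\gamma'})$ as expectations over a uniformly chosen element $x \in X$, swap the order of expectations, and finish with Cauchy--Schwarz (equivalently, convexity of $t \mapsto t^2$).

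Concretely, let $x$ denote a uniformly random element of $X$, independent of $\gamma,\gamma'$. I would first write
$$\mu(X_\gamma) = \mathbb{E}_x \mathbf{1}[x \in X_\gamma], \qquad \mu(X_\gamma \cap X_{\gamma'}) = \mathbb{E}_x \mathbf{1}[x \in X_\gamma]\mathbf{1}[x \in X_{\gamma'}].$$
Define $f(x) := \mathbb{E}_\gamma \mathbf{1}[x \in X_\gamma]$, which is just the probability that a random $\gamma$ hits $x$. The first identity, combined with Fubini, gives $\mathbb{E}_x f(x) = \mathbb{E}_\gamma \mu(X_\gamma) = \delta$.

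Now applying Fubini to the double expectation over $\gamma$ and $\gamma'$, and using that these are independent so the two indicators factorize after fixing $x$,
$$\mathbb{E}_{\gamma,\gamma'}\mu(X_\gamma \cap X_{\gamma'}) = \mathbb{E}_x \mathbb{E}_{\gamma,\gamma'} \mathbf{1}[x \in X_\gamma]\mathbf{1}[x \in X_{\gamma'}] = \mathbb{E}_x f(x)^2.$$
By Cauchy--Schwarz (or the fact that variance is nonnegative), $\mathbb{E}_x f(x)^2 \geq (\mathbb{E}_x f(x))^2 = \delta^2$, which is exactly the desired bound.

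There is no real obstacle here; this is a routine second-moment / Cauchy--Schwarz manipulation. The only point of care is to keep the two sources of randomness --- the element $x \in X$ versus the parameter $\gamma \in \Gamma$ --- cleanly separated, so that one can legitimately apply Fubini and exploit the independence of $\gamma$ and $\gamma'$ when they are conditioned on a common $x$.
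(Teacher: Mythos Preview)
Your proof is correct and essentially identical to the paper's: both write $\mu(X_\gamma)$ and $\mu(X_\gamma\cap X_{\gamma'})$ as averages of indicator functions over $x\in X$, swap the order of expectation, and apply Cauchy--Schwarz to $\mathbb{E}_x(\mathbb{E}_\gamma\xi_\gamma(x))^2\ge(\mathbb{E}_x\mathbb{E}_\gamma\xi_\gamma(x))^2$. The paper's $\xi_\gamma(x)$ is your $\mathbf{1}[x\in X_\gamma]$, and the chain of equalities is the same.
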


\begin{proof}
Let $\xi_\gamma$ be the characteristic function of $X_\gamma$. Then
\begin{align*}
\delta^2&=(\mathbb{E}_\gamma\mu(X_\gamma))^2\\
&=(\mathbb{E}_\gamma\mathbb{E}_x\xi_\gamma(x))^2\\
&\leq\mathbb{E}_x(\mathbb{E}_\gamma\xi_\gamma(x))^2\\
&=\mathbb{E}_x\mathbb{E}_{\gamma,\gamma'}\xi_\gamma(x)\xi_{\gamma'}(x)\\
&=\mathbb{E}_{\gamma,\gamma'}\mu(X_\gamma\cap X_{\gamma'}).\\
\end{align*}
The inequality above is Cauchy-Schwarz. The result follows.
\end{proof}

\begin{theorem} \label{multidimsperner}
Let $\mathcal{A}$ be a collection of subsets of $[n]$ that contains
no $d$-dimensional combinatorial subspace. Then the density of 
$\mathcal{A}$ is at most $(25/n)^{1/2^d}$.
\end{theorem}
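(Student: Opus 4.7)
The plan is by induction on $d$. The base case $d = 1$ follows from Sperner's theorem (Theorem~\ref{sperner}): a family with no $1$-dimensional combinatorial subspace is an antichain, so has size at most $\binom{n}{\lfloor n/2 \rfloor}$, giving density at most $\sqrt{2/(\pi n)} \leq \sqrt{25/n}$.

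For the inductive step, assume the theorem for $d-1$ and let $\mathcal{A} \subseteq 2^{[n]}$ have density $\delta$ and contain no $d$-dimensional combinatorial subspace. I would split $[n] = S \sqcup T$, with $|S|$ to be chosen, and for each $R \subseteq T$ form the slice $\mathcal{A}_R := \{L \subseteq S : L \cup R \in \mathcal{A}\} \subseteq 2^S$, so that $\mathbb{E}_{R \sim 2^T} \mu(\mathcal{A}_R) = \delta$. The key structural observation is: whenever $R_1 \subsetneq R_2 \subseteq T$, the set $\mathcal{A}_{R_1} \cap \mathcal{A}_{R_2}$ contains no $(d-1)$-dimensional combinatorial subspace, since any such subspace $\sigma \subseteq 2^S$, glued to the $1$-dimensional line $(R_1, R_2)$ with the new wildcard $R_2 \setminus R_1 \subseteq T$, would yield a $d$-dimensional subspace contained in $\mathcal{A}$. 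By the inductive hypothesis, $\mu(\mathcal{A}_{R_1} \cap \mathcal{A}_{R_2}) \leq (25/|S|)^{1/2^{d-1}}$ for every such chain pair.

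To turn this structural constraint into a density bound, I would combine Lemma~\ref{intersections} with a pigeonhole and Sperner step. Cauchy-Schwarz gives $\mathbb{E}_{R, R' \text{ i.i.d.~uniform}} \mu(\mathcal{A}_R \cap \mathcal{A}_{R'}) \geq \delta^2$, ensuring that a sizable fraction of $R$'s have reasonably dense slices. For each such $R$, the inductive hypothesis supplies a $(d-1)$-dimensional subspace $\sigma_R \subseteq \mathcal{A}_R$; since $2^S$ admits at most $(d+1)^{|S|}$ distinct $(d-1)$-dimensional subspaces (each coordinate of $S$ is labelled either ``fixed $0$'', ``fixed $1$'', or belongs to one of the $d-1$ wildcards), pigeonhole produces many $R$'s sharing a common $\sigma$. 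If this common family exceeds the Sperner antichain bound $\binom{|T|}{\lfloor |T|/2 \rfloor}$ in $2^T$, it must contain a chain $R_1 \subsetneq R_2$, which together with $\sigma$ yields a $d$-dimensional subspace in $\mathcal{A}$, the desired contradiction. The main obstacle is choosing $|S|$ to satisfy three competing demands: it must be large enough for the inductive hypothesis to apply to typical slices, small enough that the pigeonhole loss $(d+1)^{|S|}$ does not overwhelm the Sperner savings $\sim 2^{|T|}/\sqrt{|T|}$ in the $T$-direction, and such that the combined bound reconstructs the target density $(25/n)^{1/2^d}$. Tracking constants carefully through this optimization, and using Lemma~\ref{intersections} to sharpen the step that identifies the dense slices, is what produces the final factor $25$.
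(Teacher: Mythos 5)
Your structural observation is exactly right and is the heart of the matter: if $R_1\subsetneq R_2\subseteq T$ and $\sigma\subseteq\mathcal{A}_{R_1}\cap\mathcal{A}_{R_2}$ is a $(d-1)$-dimensional subspace of $2^S$, then adjoining the new wildcard $R_2\setminus R_1$ produces a $d$-dimensional subspace inside $\mathcal{A}$. The difficulty is that the quantitative route you then take---find a $(d-1)$-subspace in each dense slice $\mathcal{A}_R$, pigeonhole over the $(d+1)^{|S|}$ possible subspaces, and finish with Sperner in the $T$-direction---cannot yield a bound of the form $(25/n)^{1/2^d}$; the tension you flag at the end is a genuine obstruction, not a bookkeeping task. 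To invoke the inductive hypothesis on a slice of density roughly $\delta=(25/n)^{1/2^d}$ you need $\delta>(25/|S|)^{1/2^{d-1}}$, which forces $|S|>25(n/25)^{1/2}=5\sqrt{n}$. But then the pigeonhole denominator $(d+1)^{|S|}$ is at least $2^{5\sqrt{n}}$, while the Sperner gain in $T$ is only $2^{|T|}/\binom{|T|}{\lfloor|T|/2\rfloor}\approx\sqrt{|T|}\le\sqrt{n}$. An exponential-in-$\sqrt{n}$ loss can never be paid for by a polynomial-in-$n$ gain, regardless of how $|S|$ is tuned; your argument does prove that $\mathcal{A}$ contains a $d$-dimensional subspace once $n$ is huge, but only with a tower-type (or worse) dependence on $n$.

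The paper sidesteps the pigeonhole over subspaces entirely: rather than extracting a complete $(d-1)$-subspace inside a single slice and then matching subspaces across slices, it builds the subspace one wildcard at a time. At each of $d-1$ rounds it chooses a random permutation $\pi$ and two random thresholds $s,t$, obtains a chain pair $A_{\pi,s}\subsetneq A_{\pi,t}$, and uses Lemma~\ref{intersections} to show that the family of tails $B$ with both $A_{\pi,s}\cup B$ and $A_{\pi,t}\cup B$ in $\mathcal{A}$ has density at least $\delta^2$ minus a small additive error. This squares the density once per round with no exponential loss; a final application of Sperner's theorem gives $\delta^{2^{d-1}}\gtrsim n^{-1/2}$, i.e.\ $\delta\gtrsim n^{-1/2^d}$. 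To salvage your sketch, replace the ``find a subspace in each slice and pigeonhole'' step with this Cauchy--Schwarz / random-threshold iteration, which is where Lemma~\ref{intersections} is actually meant to do its work.
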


\begin{proof}
Let $\delta$ be the density of $\mathcal{A}$. (That is, $\mathcal{A}$
has cardinality $\delta 2^n$.) For $i=1,2,\dots,d-1$ let $n_i=\lfloor n/4^{d-i}\rfloor$
and let $n_d=n-(n_1+\dots+n_{d-1})$. Note that $n_d\geq(2/3)n$.

Let us partition $[n]$ into sets
$J_1\cup\dots\cup J_{d-1}\cup E$ with $|J_i|=\lfloor n/4^{d-i}\rfloor$.
Note that $|E|\geq(2/3)n$.

Now consider the following way of choosing a random subset $A$ of 
$[n]$. First we choose a random permutation $\pi$ of $[n]$. Then 
we choose a random integer $s$ according to the binomial distribution
with parameters $n_1$ and $1/2$. Next, we let $B$ be a random
subset of $\{\pi(n_1+1),\dots,\pi(n)\}$. Finally, we let $A$ be the set
$\{\pi(1),\dots,\pi(s)\}\cup B$. The resulting distribution on $A$ is
uniform, as can be seen by conditioning on the set $\{\pi(1),\dots,\pi(n_1)\}$.

Let us write $A_{\pi,s}$ for the set $\{\pi(1),\dots,\pi(s)\}$ and $X_{\pi,s}$
for the set of all $B\subset\{\pi(n_1+1),\dots,\pi(n)\}$ such that 
$A_{\pi,s}\cup B\in\mathcal{A}$. Then the average density of 
$X_{\pi,s}$ (in the set of all subsets of $\{\pi(n_1+1),\dots,\pi(n)\}$)
is $\delta$. Therefore, by Lemma \ref{intersections}, if we first choose
$\pi$ randomly and then choose $s$ and $t$ independently at 
random from the binomial distribution as we did for $s$ above, then 
the average density of $X_{\pi,s}\cap X_{\pi,t}$ is at least $\delta^2$.

We would like $s$ and $t$ to be distinct. The probability that $s=t$
is equal to $2^{-n_1}\binom{2n_1}{n_1}$ (since it is the same as the probability
that $s+t=n_1$), which is well known to be at most $n_1^{-1/2}$, which
in turn is at most $2^{d-1}n^{-1/2}$. Therefore, the expected density
of $X_{\pi,s}\cap X_{\pi,t}$ conditional on $s\ne t$ is at least $\delta^2-2^{d-1}n^{-1/2}$.

Let us choose $s<t$ such that 
$\mu(X_{\pi,s}\cap X_{\pi,t})\geq\delta^2-2^{d-1}n^{-1/2}$, and let us
write $A_0^{(1)}$ and $A_1^{(1)}$ for $A_{\pi,s}$ and $A_{\pi,t}$. Note that 
$A_0^{(1)}$ is a proper subset of $A_1^{(1)}$, that both are disjoint from
the set $\{\pi(n_1+1),\dots,\pi(n)\}$ and that  $A_0^{(1)}\cup B$ and $A_1^{(1)}\cup B$
both belong to $\mathcal{A}$ for every $B\in X_{\pi,s}\cap X_{\pi,t}$.

Now let us run the argument again, with $n$ replaced by $n-n_1$, $n_1$
replaced by $n_2$ and $\mathcal{A}$ replaced by the set 
$\mathcal{A}_1=X_{\pi,s}\cap X_{\pi,t}$. It gives us sets 
$A_0^{(2)}$ and $A_1^{(2)}$ and a set $\mathcal{A}_2$ of subsets
of $\{\pi(n_2+1),\dots,\pi(n)\}$ such that $A_0^{(2)}$ is a proper subset 
of $A_1^{(2)}$, both $A_0^{(2)}$ and $A_1^{(2)}$ are disjoint from 
$\{\pi(n_2+1),\dots,\pi(n)\}$, both $A_0^{(2)}\cup B$ and $A_1^{(2)}\cup B$ belong
to $\mathcal{A}_1$ for every $B\in\mathcal{A}_2$, and the density of
$\mathcal{A}_2$ is at least
\begin{equation*}
(\delta^2-2^{d-1}n^{-1/2})^2-2^{d-2}n^{-1/2}\geq\delta^4-2^{d-1}n^{-1/2},
\end{equation*}
where for the last inequality we used the fact that $\delta<1/2$.

If we continue this process and have shown that $\mathcal{A}_r$ has
density at least $\delta^{2^r}-2^{d-r+1}n^{-1/2}$, then at the next stage
we obtain $\mathcal{A}_{r+1}$ with density at least
\begin{equation*}
(\delta^{2^r}-2^{d-r+1}n^{-1/2})^2-2^{d-r-1}n^{-1/2}\geq\delta^{2^{r+1}}-2^{d-r}n^{-1/2}.
\end{equation*}
Therefore, as long as $\delta^{2^{d-1}}-4n^{-1/2}\geq 1/2\sqrt{2n/3}$, then
by Sperner's theorem $\mathcal{A}_{d-1}$ contains two sets $A_d^{(0)}$ and
$A_d^{(1)}$, with $A_d^{(0)}$ a proper subset of $A_d^{(1)}$. This gives us the 
desired combinatorial subspace (which consists of all sets of the form
$A_1^{(\epsilon_1)}\cup\dots\cup A_d^{(\epsilon_d)}$ such that
each $\epsilon_i$ is either 0 or 1. 

The inequality we need is true if $n\geq 5^2/\delta^{2^d}$, so the theorem
is proved.
\end{proof}

\section{Equal-slices measure and probabilistic DHJ}

The proof of Sperner's theorem can be regarded as follows. First,
one chooses a different measure on the power set of $[n]$, where
to choose a set you first choose its cardinality $m$ uniformly at random
from $\{0,1,2,\dots,n\}$ and you then choose a random set of size $m$.
The set of all subsets of $[n]$ of size $m$ is sometimes denoted
by $[n]^{(m)}$ and called a \textit{layer} or \textit{slice} of the cube.
We therefore call the resulting probability measure on the power
set of $[n]$, or equivalently on $[2]^n$, the \textit{equal-slices
measure}.

This measure arises so naturally in the averaging argument that
we used to prove Sperner's theorem that it is tempting to say that
the ``real" theorem is that the maximum possible equal-slices 
measure of an antichain is $1/(n+1)$. One then converts that
into a slightly artificial (and weaker) statement about the uniform 
measure.

The advantage of equal-slices measure is not just cosmetic,
however: it and its obvious generalization to $[k]^n$ will play
a crucial role in our proof. Rather than saying straight away why
this should be, we shall prove a result using equal-slices measure 
and explain why it would be problematic to give a uniform version.

But before we do that, let us give a formal definition of the equal-slices
measure on $[k]^n$. This time we choose, uniformly at random from
all possibilities, a $k$-tuple $(a_1,\dots,a_k)$ of non-negative integers
that add up to $n$, and then we choose a sequence $x\in[k]^n$ such that 
for each $j$ the set $X_j=\{i:x_i=j\}$ has cardinality $a_j$, again 
uniformly from all possibilities (of which there are $\binom n{a_1,\dots,a_k}$).  

The number of slices can be worked out by a ``holes and pegs"
argument: given any subset $B=\{b_1,\dots,b_{k-1}\}$ of 
$\{1,2,\dots,n+k-1\}$ of size $k-1$, let $a_i$ be the number
of integers strictly between $b_{i-1}$ and $b_i$, where we
treat $b_0$ as $0$ and $b_k$ as $n+k$. This gives us all
possible sequences $(a_1,\dots,a_k)$ exactly once each,
so the number of slices is $\binom{n+k-1}{k-1}$. 

For use in the proof of the next theorem,
we note that if $k=3$ then the number of slices with $a_2=0$
is $n+1$, so the probability that $a_2=0$ is $(n+1)/\binom{n+2}2$,
which equals $2/(n+2)$. 

We can easily define equal-slices measure 
for combinatorial lines as well. Indeed, there is a one-to-one correspondence
between lines in $[k]^n$ and points in $[k+1]^n$, at least if one
allows the lines to be degenerate. If $y\in[k+1]^n$, then the corresponding
line consists of all points of the form $y^{k+1\rightarrow j}$ with
$j\in[k]$; in other words, the set of $i$ such that $y_i=k+1$ is
treated as a wildcard set.

\subsection{A probabilistic version of Sperner's theorem}

As mentioned in the introduction, our proof of DHJ uses a density-increment
strategy: that is, we assume that $A$ does not contain a line and deduce
that $A$ has increased density inside some subspace. In almost all known 
proofs of this kind, one can in fact get away with a weaker hypothesis. If
$A$ is a dense set inside which one wishes to find some structure, then one 
can find a density increment on the assumption that $A$ has ``too few" subsets
of the kind one is looking for, or more generally ``the wrong number" of such 
subsets, where ``the right number" is the number you would expect if $A$ is
a random subset of density $\delta$. Similarly, it is also possible to find equivalent 
versions of the theorems that say that a set $A$ of density $\delta$ contains not 
just one subset of the desired kind, but ``many" such subsets, where this means 
that if you choose a random such subset then with probability at least $c=c(\delta)>0$
it will lie in $A$. A statement like this is called a ``probabilistic version" of the 
density theorem.

This is a sufficiently important feature of previously known arguments that
it is initially unsettling to observe that it is false for DHJ even when $k=2$.
The reason is a simple one. By standard measure-concentration results,
almost all points in $[2]^n$ have roughly $n/2$ 1s and $n/2$ 2s. By the
same results, almost all combinatorial lines have roughly $n/3$ fixed 1s, 
$n/3$ fixed 2s and $n/3$ variable coordinates. (A precise statement 
expressing this can be found in Lemma \ref{balanced} below.) It follows 
that there is a set of density almost 1 (the set of sequences with roughly 
equal numbers of 1s and 2s) that contains only a tiny fraction of all lines 
(ones with roughly $n/2$ fixed 1s, roughly $n/2$ fixed 2s and a very small
wildcard set). 

However, this does not mean that there is no probabilistic version of
DHJ, which is fortunate as we shall need one later. It merely means
that the uniform measure is the wrong measure in which to express it.
To illustrate this point, we now prove a ``probabilistic" version of DHJ$_2$. 
It tells us that an equal-slices-dense subset of $[2]^n$ must contain an 
equal-slices-dense set of lines. 

\begin{theorem} \label{probabilisticsperner}
Let $A$ be a subset of $[2]^n$ of equal-slices density $\delta$. Then
the set of (possibly degenerate) combinatorial lines in $A$ has equal-slices 
density at least $\delta^2(n+1)/(n+2)$.
\end{theorem}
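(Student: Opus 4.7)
The plan is to generate a random combinatorial line (equivalently, a random point in $[3]^n$ under the equal-slices measure) by the same Sperner-style procedure used in the proof of Theorem~\ref{sperner}. Specifically, I pick a uniformly random permutation $\pi \in S_n$ together with an independent uniformly random pair $(s,t)$ with $0 \leq s \leq t \leq n$ (there are $\binom{n+2}{2}$ such pairs), and form the line whose fixed $1$-coordinates are $\{\pi(1),\dots,\pi(s)\}$, whose wildcard set is $\{\pi(s+1),\dots,\pi(t)\}$, and whose fixed $2$-coordinates are $\{\pi(t+1),\dots,\pi(n)\}$. A multinomial count shows that each specific line with partition sizes $(a_1,a_3,a_2)$ arises with probability $a_1!\,a_3!\,a_2!/(n!\binom{n+2}{2})$, which matches the equal-slices measure on $[3]^n$.

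Writing $\ell_\pi(m) \in [2]^n$ for the point with $1$'s exactly on $\{\pi(1),\dots,\pi(m)\}$ (and $2$'s elsewhere), the two endpoints of the random line are $\ell_\pi(t)$ (collapsing wildcards to $1$) and $\ell_\pi(s)$ (collapsing wildcards to $2$). Letting $f_\pi(m) = \mathbf{1}_A(\ell_\pi(m))$, the equal-slices probability that the whole line lies in $A$ equals
\[ \mathbb{E}_\pi \, \frac{1}{\binom{n+2}{2}} \sum_{0 \leq s \leq t \leq n} f_\pi(s)\, f_\pi(t). \]
Since $f_\pi(m) \in \{0,1\}$, the inner sum equals $\tfrac{1}{2}(S_\pi^2 + S_\pi)$, where $S_\pi := \sum_{m=0}^n f_\pi(m)$.

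For each fixed $m$, $\ell_\pi(m)$ is uniformly distributed over the $m$th slice of $[2]^n$, so $\mathbb{E}_\pi f_\pi(m)$ is the relative density of $A$ in that slice; summing over the $n+1$ slices gives $\mathbb{E}_\pi S_\pi = (n+1)\delta$. Jensen's inequality then yields $\mathbb{E}_\pi S_\pi^2 \geq \delta^2(n+1)^2$. Substituting and using $\binom{n+2}{2} = (n+1)(n+2)/2$, the displayed expression is at least $\delta(\delta(n+1)+1)/(n+2) \geq \delta^2(n+1)/(n+2)$, as required. The only non-routine ingredient is the initial identification of equal-slices sampling of lines with this $\pi$-plus-$(s,t)$ coupling; once that is in place, the bound follows from a single application of Cauchy--Schwarz, with the extra $+\,\delta/(n+2)$ term coming from the ``diagonal'' $s=t$ contribution (i.e.\ the degenerate lines).
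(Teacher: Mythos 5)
Your argument is correct and follows essentially the same Sperner-style random-chain, second-moment approach as the paper's proof (random permutation $\pi$, indicators $f_\pi(m)$, Jensen/Cauchy--Schwarz on $S_\pi$). The one nice difference is the coupling: by drawing the \emph{unordered} pair $(s,t)$ with $0\le s\le t\le n$ uniformly from the $\binom{n+2}{2}$ possibilities, you land exactly on the equal-slices distribution over lines, which lets you skip the paper's somewhat fiddly final correction step where it draws $s,t$ independently and then compares $p(\ell)$ to $q(\ell)$ after conditioning on $s\ne t$.
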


\begin{proof}
Let $\pi$ be a random permutation of $[n]$ and let $s$ and $t$ be elements
of $\{0,1,2,\dots,n\}$ chosen independently and uniformly at random. Let
us write $x_{\pi,m}$ for the sequence that takes the value $1$ at $\pi(1),\dots,\pi(m)$
and $0$ everywhere else, and let $X_\pi$ be the
number of the sequences $x_{\pi,s}$ that belong to $A$. Then 
$\mathbb{E}X_\pi=\delta n$, by the definition of equal-slices measure.

From this it follows that $\mathbb{E}X_\pi^2$ is
at least $\delta^2 n^2$. But $X_\pi^2$ is the number of pairs $(s,t)$ such that
both $x_{\pi,s}$ and $x_{\pi,t}$ belong to $A$. Therefore, if we choose 
a random pair $\{x_{\pi,s},x_{\pi,t}\}$ then the probability that both its
constituent sequences belong to $A$ is at least $\delta^2$. 

Now each such pair forms a combinatorial line. 
If $s\leq t$, then this line consists of all sequences $x$ such that $x_i=1$ if
$i\in\{\pi(1),\dots,\pi(s)\}$, $x_i=0$ if $i\in\{\pi(t+1),\dots,\pi(n)\}$, 
and $x$ is constant on the set $\{\pi(s+1),\dots,\pi(t)\}$. (Thus, the
set $\{\pi(s+1),\dots,\pi(t)\}$ is the wildcard set.) If $t\leq s$ then 
we simply interchange the roles of $s$ and $t$ in the above. (If
$t=s$ then we have a degenerate line and interchanging the roles
of $s$ and $t$ makes no difference.) 

There is one technical detail that we need to address, which is that
the probability $p(\ell)$ that we choose a particular combinatorial line 
$\ell$ is not quite the equal-slices probability $q(\ell)$. In particular, the 
probability that the line is degenerate is $(n+1)^{-1}$ instead of $2(n+2)^{-1}$.
However, if we condition on the event that $s\ne t$, then 
we are choosing a random subset of $\{0,1,2,\dots,n\}$ of size 2,
and such pairs are in one-to-one correspondence with triples
$(a_1,a_2,a_3)$ such that $a_1+a_2+a_3=n$ and $a_2\ne 0$.
Thus, $p(\ell)=(n+2)q(\ell)/2(n+1)$ if $\ell$ is degenerate, and
$p(\ell)=(1-(n+1)^{-1})q(\ell)/(1-2(n+2)^{-1})=(n+2)q(\ell)/(n+1)$
if $\ell$ is non-degenerate.

From the above calculation it follows that the set of lines in $A$ has 
equal-slices density at least $\delta^2(n+1)/(n+2)$, as claimed.
\end{proof}

The equal-slices density of the set of degenerate lines is $O(n^{-1})$, so
this result implies that there is a dense set of non-degenerate
combinatorial lines in $A$ as well.

\subsection{Non-degenerate equal-slices measure}

For technical reasons, it is sometimes convenient, when talking
about equal-slices measure, to condition on the event that every
$j\in[k]$ is equal to $x_i$ for some $i$. Indeed, we have already
seen in the proof of Theorem \ref{probabilisticsperner} that 
degenerate slices---that is, slices for which this condition does
not hold---can be slightly problematic. It turns out that if we
condition on the slices not being degenerate, then we can prove
a useful lemma that would hold only approximately, and after
tedious consideration of the degenerate cases, if we used the
equal-slices measure itself.

Let us therefore define the \textit{non-degenerate equal-slices
measure} on $[k]^n$ as follows. One first chooses a random
$k$-tuple of positive (rather than non-negative) integers $(a_1,\dots,a_k)$
that add up to $n$ and then a random sequence $x\in[k]^n$ such
that $|X_j|=a_j$ for each $j$, where as before $X_j$ is the set
$\{i\in[n]:x_i=j\}$. 

A helpful equivalent way of defining this measure is as follows. To select
a random point $x\in[k]^n$, one places $n$ points $q_1,\dots,q_n$
around a circle in a random order. That creates $n$ gaps between
consecutive points. One chooses a random set of $k$ of these
gaps and places further points $r_1,\dots,r_k$ into the gaps,
again in a random order. Finally, one sets $x_i$ to be $j$ if
and only if $r_j$ is the first point out of $r_1,\dots,r_k$ that
you come to if you go round clockwise starting at $q_i$. 

Note that since the $q_i$ are in a random order, precisely 
the same distribution will arise if the $r_j$ are placed in some
fixed order rather than their order too being randomized. 
However, it is more convenient to randomize everything.
Note also that since we do not allow two different $r_j$
to occupy the same gap, for each $j$ there exists $i$
such that $x_i=j$. Finally, note that apart from this constraint,
all slices are equally likely. Therefore, we really do have
the equal-slices measure conditioned on the event that
the slices are non-degenerate.

To see the effect that this conditioning has, let us give an upper
bound for the probability is that a slice \textit{is} degenerate.

\begin{lemma}\label{degenerate}
Let $x$ be an equal-slices random point of $[k]^n$. Then the probability
that no coordinate of $x$ is equal to $k$ is $\frac{k-1}{n+k-1}$. In particular,
it is at most $k/n$.
\end{lemma}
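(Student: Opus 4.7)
The plan is to observe that the event ``no coordinate of $x$ equals $k$'' depends only on the slice $(a_1,\dots,a_k)$ that is chosen in the first step of the equal-slices construction, and not on the uniform choice of $x$ within that slice. Indeed, the event is exactly $\{a_k=0\}$: if $a_k=0$ then every point on the slice avoids the value $k$, while if $a_k\geq 1$ then every point on the slice has at least one coordinate equal to $k$. So the probability we want equals the uniform probability of $a_k=0$ among all $k$-tuples of non-negative integers summing to $n$.

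Next I would count these slices. The total number is $\binom{n+k-1}{k-1}$, by the holes-and-pegs bijection already given in the paper. The slices with $a_k=0$ are in bijection with $(k-1)$-tuples of non-negative integers summing to $n$, of which there are $\binom{n+k-2}{k-2}$. (Equivalently, in the holes-and-pegs picture these correspond to the $(k-1)$-subsets $\{b_1,\dots,b_{k-1}\}$ of $\{1,\dots,n+k-1\}$ whose largest element $b_{k-1}$ is $n+k-1$, which amounts to choosing the remaining $k-2$ elements from $\{1,\dots,n+k-2\}$.)

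Taking the ratio and simplifying,
\begin{equation*}
\frac{\binom{n+k-2}{k-2}}{\binom{n+k-1}{k-1}}=\frac{(n+k-2)!\,(k-1)!\,n!}{(k-2)!\,n!\,(n+k-1)!}=\frac{k-1}{n+k-1},
\end{equation*}
which is the stated probability. The bound $\frac{k-1}{n+k-1}\leq\frac{k}{n}$ follows immediately from $n(k-1)\leq k(n+k-1)$, which rearranges to $0\leq k(k-1)$. Since the whole argument is a one-line probability-calculus reduction followed by a binomial coefficient simplification, there is no real obstacle; the only thing worth emphasising is the observation that the event is determined by the slice, so the uniform choice within a slice drops out cleanly.
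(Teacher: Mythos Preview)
Your proof is correct and follows essentially the same approach as the paper: both reduce to the probability that $a_k=0$ under the uniform measure on slices, and both use the holes-and-pegs bijection to compute this. The only cosmetic difference is that the paper phrases the computation as ``the probability that $n+k-1$ belongs to the random $(k-1)$-subset $P$'', which immediately gives $\frac{k-1}{n+k-1}$, whereas you write out and simplify the ratio $\binom{n+k-2}{k-2}/\binom{n+k-1}{k-1}$ --- the same calculation in slightly different dress.
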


\begin{proof}
To choose $k$ non-negative integers $a_1,\dots,a_k$ that add up to $n$, 
and to do so uniformly from all possibilities, one can choose a random subset 
$P=\{p_1<\dots<p_{k-1}\}\subset\{1,2,\dots,n+k-1\}$ of size $k-1$ (of ``pegs") 
and let $a_i$ be the number of integers strictly between $p_{i-1}$ and $p_i$,
where we set $p_0=0$ and $p_k=n+k$. The probability that no coordinate of
$x$ is equal to $k$ is the probability that $a_k=0$, which is the
probability that $n+k-1\in P$, which is $\frac{k-1}{n+k-1}$, as claimed.
\end{proof}

\begin{corollary} \label{nondegeneracy}
Let $\nu$ and $\tilde\nu$ be the equal-slices and non-degenerate equal-slices
measures on $[k]^n$, respectively. Then for any set $A\subset[k]^n$ we
have $|\nu(A)-\tilde\nu(A)|\leq k^2/n$
\end{corollary}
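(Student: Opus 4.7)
The plan is to recognize $\tilde\nu$ as $\nu$ conditioned on the non-degeneracy event, then bound the probability of degeneracy via Lemma \ref{degenerate} and a union bound, and finally do a short calculation relating conditional to unconditional measure.

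First I would let $D \subset [k]^n$ be the degenerate event: the set of $x$ for which some value $j \in [k]$ fails to appear as a coordinate. By the symmetry of the equal-slices measure under permutations of $[k]$, Lemma \ref{degenerate} applies to each $j$, so the probability that $j$ is missing equals $(k-1)/(n+k-1)$. A union bound over $j \in [k]$ gives
\[
p := \nu(D) \leq \frac{k(k-1)}{n+k-1} \leq \frac{k^2}{n},
\]
where the last step is the elementary check $k(k-1)n \leq k^2(n+k-1)$.

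Next, from the equivalent ``points on a circle'' description of $\tilde\nu$ given just before the lemma, one sees directly that $\tilde\nu$ is exactly $\nu$ conditioned on $D^c$: within the non-degenerate slices both measures weight slices uniformly, and within a fixed slice both distribute uniformly. Thus $\tilde\nu(A) = \nu(A \cap D^c)/(1-p)$ for any $A$.

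Finally I would do the routine estimate: writing $\nu(A) = \nu(A\cap D^c) + \nu(A \cap D)$,
\[
\tilde\nu(A) - \nu(A) = \nu(A\cap D^c)\Bigl(\frac{1}{1-p}-1\Bigr) - \nu(A \cap D) = \frac{p}{1-p}\,\nu(A\cap D^c) - \nu(A \cap D).
\]
Since $\nu(A \cap D^c) \leq 1-p$ and $\nu(A \cap D) \leq p$, each of the two terms has absolute value at most $p$, and they have opposite signs, so $|\tilde\nu(A) - \nu(A)| \leq p \leq k^2/n$, as required.

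There is no real obstacle here; the only things to be careful about are invoking the symmetry of $\nu$ under permutations of $[k]$ to apply Lemma \ref{degenerate} for each value (not just $j=k$), and noting that the two error terms in the last display carry opposite signs so that one obtains $p$ rather than $2p$ or $p/(1-p)$ on the right-hand side.
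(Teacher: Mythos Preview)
Your proof is correct and follows essentially the same approach as the paper's: both bound the degeneracy probability $p$ by $k^2/n$ via Lemma~\ref{degenerate} and a union bound over the $k$ values, identify $\tilde\nu$ as $\nu$ conditioned on non-degeneracy, and then observe that the contributions from $A\cap D^c$ and $A\cap D$ to $\tilde\nu(A)-\nu(A)$ have opposite signs, so the bound is $p$ rather than $2p$. The paper phrases this last step by treating the non-degenerate and degenerate parts of $A$ separately and noting the differences ``cancel out rather than reinforcing each other,'' while you write out the single algebraic identity; the content is the same.
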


\begin{proof}
It follows from Lemma \ref{degenerate} that the probability that a slice is 
degenerate is at most $k^2/n$. Therefore, if $A$ is a set that consists only 
of non-degenerate sequences, then its non-degenerate equal-slices 
measure is $(1-c)^{-1}$ times its equal-slices measure, for some $c<k^2/n$. 
Therefore, for such a set, $0\leq\tilde\nu(A)-\nu(A)=c\tilde\nu(A)\leq k^2/n$.
If $A$ consists only of degenerate sequences, then $0\leq\nu(A)-\tilde\nu(A)
=\nu(A)\leq k^2/n$. The result follows, since if one takes a union of sets of
the two different kinds, then the differences cancel out rather than reinforcing
each other.
\end{proof}

For later use, we slightly generalize Lemma \ref{degenerate}.

\begin{lemma} \label{knottoosmall}
Let $x$ be chosen randomly from $[k]^n$ using the equal-slices distribution.
Then the probability that fewer than $m$ coordinates of $x$ are equal to k is at
most $mk/n$.
\end{lemma}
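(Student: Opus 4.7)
The plan is to adapt the ``pegs and holes'' argument from Lemma \ref{degenerate} essentially verbatim, replacing the event $\{a_k=0\}$ with $\{a_k<m\}$. Recall that choosing the slice uniformly amounts to choosing a random subset $P=\{p_1<\dots<p_{k-1}\}$ of $\{1,2,\dots,n+k-1\}$ of size $k-1$, and then $a_k$ equals the number of integers strictly between $p_{k-1}$ and $n+k$, i.e.\ $a_k=(n+k-1)-p_{k-1}$.

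First, I would translate the event. Since $a_k<m$ is equivalent to $p_{k-1}\geq n+k-m$, this in turn is equivalent to $P$ having nonempty intersection with the top $m$ elements $\{n+k-m,\dots,n+k-1\}$ of the ground set (because the largest element of $P$ lies in this range iff any element of $P$ does). So the event that fewer than $m$ coordinates of $x$ are equal to $k$ is exactly the event that $P$ contains one of these $m$ specific ``high'' pegs.

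Next, I would apply the union bound. For any single $j\in\{n+k-m,\dots,n+k-1\}$, the probability that $j\in P$ is $(k-1)/(n+k-1)$, since $P$ is a uniformly random $(k-1)$-subset. Hence
\begin{equation*}
\Pr[a_k<m]\ \leq\ \sum_{j=n+k-m}^{n+k-1}\Pr[j\in P]\ =\ \frac{m(k-1)}{n+k-1}.
\end{equation*}
A quick check shows $m(k-1)/(n+k-1)\leq mk/n$ (cross-multiplying gives $-n\leq k^2-k$, which holds), yielding the claimed bound.

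There is no real obstacle here, as the argument is essentially the observation that Lemma \ref{degenerate} bounds the probability that the largest peg is exactly at position $n+k-1$, and the present lemma just needs the probability that the largest peg lies anywhere in the top $m$ positions, which loses only a factor of $m$ via the union bound.
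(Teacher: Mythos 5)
Your proof is correct and follows essentially the same route as the paper's: both translate the event $\{a_k<m\}$ into the event $\{p_{k-1}\geq n+k-m\}$ in the pegs picture and bound its probability by summing $m$ terms each at most $(k-1)/(n+k-1)\leq k/n$. The only cosmetic difference is that you use a union bound over the events $\{j\in P\}$ while the paper sums the (disjoint) events $\{p_{k-1}=n+k-s\}$ and bounds each binomial ratio; the resulting estimates coincide.
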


\begin{proof}
Let $P$ be as in the proof of Lemma \ref{degenerate}. This time we are
interested in the probability that $p_{k-1}\geq n+k-m$. The number with 
$p_{k-1}=n+k-s$ is $\binom{n+k-s-1}{k-2}$, which is at most $\binom{n+k-2}{k-2}$,
which as we noted in the proof of Lemma \ref{degenerate} is at most
$\frac kn\binom{n+k-1}{k-1}$. The result follows.
\end{proof}

\begin{corollary} \label{nottooimbalanced}
Let $x$ be chosen randomly from $[k]^n$ using the equal-slices distribution.
Then the probability that there exists $j\in[k]$ such that fewer than $m$ 
coordinates of $x$ are equal to j is at most $mk^2/n$.
\end{corollary}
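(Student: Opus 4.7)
The plan is to derive this corollary directly from Lemma \ref{knottoosmall} via a symmetry observation followed by a union bound. Lemma \ref{knottoosmall} already gives the desired tail estimate for the specific coordinate value $j=k$: the probability that fewer than $m$ coordinates of $x$ equal $k$ is at most $mk/n$.

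First I would observe that the equal-slices measure on $[k]^n$ is invariant under permutations of the alphabet $[k]$. Indeed, in the two-stage definition of the measure, the tuple $(a_1,\dots,a_k)$ is drawn uniformly from all non-negative compositions of $n$ into $k$ parts, which is a symmetric distribution in the indices; and conditional on $(a_1,\dots,a_k)$, the sequence $x$ is uniform over all placements with $|X_j|=a_j$. Applying the permutation that swaps $j$ with $k$ therefore pushes the distribution to itself, so for every $j \in [k]$ the probability that fewer than $m$ coordinates of $x$ equal $j$ is identical to the probability computed in Lemma \ref{knottoosmall}, namely at most $mk/n$.

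Then I would finish by a union bound: the event ``there exists $j \in [k]$ with $|\{i : x_i = j\}| < m$'' is the union over the $k$ choices of $j$ of events each having probability at most $mk/n$. Hence its probability is at most $k \cdot (mk/n) = mk^2/n$, as required.

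The argument is essentially a one-liner, so there is no real obstacle; the only thing worth being careful about is the symmetry step, which is immediate from the definition of equal-slices measure but must be invoked explicitly because Lemma \ref{knottoosmall} as stated only concerns the symbol $k$.
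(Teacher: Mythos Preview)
Your proof is correct and is exactly the argument the paper has in mind: the paper's own proof simply reads ``This follows immediately from Lemma \ref{knottoosmall},'' leaving the symmetry and union-bound steps implicit, and you have spelled them out.
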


\begin{proof}
This follows immediately from Lemma \ref{knottoosmall}.
\end{proof}

Now let us return to our discussion of the non-degenerate equal-slices
measure. The next result tells us that it has a beautiful property. Let us
use the expression $\tilde\nu$-random to mean ``random and chosen
according to the non-degenerate equal-slices measure". Then the 
property is that a $\tilde\nu$-random point in a $\tilde\nu$-random 
subspace with no fixed coordinates is a $\tilde\nu$-random point. This 
result will enable us to carry out clean averaging arguments when we 
are using equal-slices measure.

We have not said what we mean by a $\tilde\nu$-random subspace 
with no fixed coordinates, but the definition is a straightforward modification 
of our earlier definition of the equal-slices density of a set of combinatorial lines.
First, a $d$-dimensional subspace with no fixed coordinates is simply
a subspace obtained by partitioning $[n]$ into $d$ non-empty sets 
$X_1,\dots,X_d$ and taking the set of all sequences $x\in[k]^n$ that 
are constant on each $X_i$. For brevity, let us call these \textit{special}
subspaces.

As we mentioned earlier, just as a combinatorial line in $[k]^n$ can be associated with a
point in $[k+1]^n$, so a $d$-dimensional combinatorial subspace
in $[k]^n$ can be associated with a point in $[k+d]^n$. If the subspace
is special, then it will in fact be associated with a point in $[d]^n$.

In the reverse direction, if $x\in[k+d]^n$, then the corresponding $d$-dimensional
subspace is the set of all points $y$ such that $y_i=j$ whenever
$j\in[k]$ and $x_i=j$, and $y$ is constant on all sets of the form
$X_j=\{i:x_i=j\}$ when $j>k$. Thus, the wildcard sets are the $d$ sets
$X_{k+1},\dots,X_{k+d}$. In the case of special subspaces, we 
take instead $x$ to belong to $[d]^n$ and the wildcard sets 
are $X_1,\dots,X_d$. 

Therefore, when we talk about the equal-slices measure or
non-degenerate equal-slices measure of a set of special 
$d$-dimensional subspaces, we are associating with each subspace
a point in $[d]^n$ and taking the corresponding measure there.
(A small detail is that for this to work we need the wildcard sets
in the combinatorial subspace to form a sequence rather than
just a set. In other words, if we permute the ``basis" then we
are considering the result as a different subspace, even though
it consists of the same points. Alternatively, one could regard
the correspondence as being $d!$-to-one.)

\begin{lemma} \label{equalslicestoequalslices}
Let $n$, $k$ and $d$ be positive integers with $n\geq k+d$. Suppose
that a point $x\in[k]^n$ is chosen randomly by first choosing a 
$\tilde\nu$-random special $d$-dimensional subspace $V$ of $[k]^n$ and
then choosing a $\tilde\nu$-random point in $V$. Then the resulting
distribution is the non-degenerate equal-slices measure on $[k]^n$.
\end{lemma}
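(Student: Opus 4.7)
The plan is to compute the probability mass function of the two-step process explicitly and compare with $\tilde\nu$. Fix any non-degenerate $x \in [k]^n$ with profile $(b_1,\dots,b_k)$ (so $b_\ell = |x^{-1}(\ell)| \geq 1$); I aim to show the two-step probability of $x$ equals $\tilde\nu(\{x\}) = 1/\bigl[\binom{n-1}{k-1}\binom{n}{b_1,\dots,b_k}\bigr]$.

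First I would parameterize. A special $d$-dimensional subspace $V$ is the same data as an ordered partition of $[n]$ into $d$ non-empty sets, equivalently a non-degenerate $\sigma \in [d]^n$. A $\tilde\nu$-random point of $V \cong [k]^d$ is then a non-degenerate surjection $y \colon [d] \to [k]$ chosen by $\tilde\nu$, and the output point is $x = y \circ \sigma$. Using the explicit formulas $P[\sigma] = 1/\bigl[\binom{n-1}{d-1}\binom{n}{a_1,\dots,a_d}\bigr]$ and $P[y] = 1/\bigl[\binom{d-1}{k-1}\binom{d}{c_1,\dots,c_k}\bigr]$, where $(a_j)$ and $(c_\ell)$ are the profiles of $\sigma$ and $y$, the two-step probability of $x$ becomes
\begin{equation*}
P[x] \;=\; \sum_y P[y] \sum_{\sigma:\, y \circ \sigma = x} P[\sigma].
\end{equation*}

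Next I would collapse the inner sum. For fixed $y$ with profile $(c_\ell)$, the $\sigma$'s with $y\circ\sigma = x$ biject with choices, for each $\ell$, of an ordered partition of $x^{-1}(\ell)$ into parts labeled by $y^{-1}(\ell)$. The crucial observation is that the number of such $\sigma$'s with a given profile $(a_j)$ is $\prod_\ell \binom{b_\ell}{(a_j)_{j \in y^{-1}(\ell)}}$, and the ratio of this to $\binom{n}{a_1,\dots,a_d}$ telescopes (via $\prod_j a_j!$ cancellation) to the $(a_j)$-independent constant $1/\binom{n}{b_1,\dots,b_k}$. The remaining count over positive integer tuples $(a_j)$ compatible with $(b_\ell)$ and $(c_\ell)$ is $\prod_\ell \binom{b_\ell - 1}{c_\ell - 1}$, the product of numbers of compositions.

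Finally I would sum over $y$: for each profile $(c_\ell)$, the total $y$-probability is $1/\binom{d-1}{k-1}$, and then $\sum_{c_\ell \geq 1,\, \sum c_\ell = d} \prod_\ell \binom{b_\ell - 1}{c_\ell - 1}$ is (after substituting $c_\ell' = c_\ell - 1$) a Vandermonde convolution evaluating to $\binom{n-k}{d-k}$. Assembling everything and invoking the elementary identity $\binom{n-1}{d-1}\binom{d-1}{k-1} = \binom{n-1}{k-1}\binom{n-k}{d-k}$ recovers exactly $\tilde\nu(\{x\})$. The main obstacle is purely bookkeeping: choosing the right parameterization of $\sigma$'s (by profile relative to $y$) so that the multinomial ratio cancels, and then recognizing the residual as Vandermonde. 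Once the setup is right, the arithmetic is routine.
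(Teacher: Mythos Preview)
Your computation is correct and complete: the multinomial cancellation, the composition count $\prod_\ell\binom{b_\ell-1}{c_\ell-1}$, the Vandermonde convolution giving $\binom{n-k}{d-k}$, and the binomial identity $\binom{n-1}{d-1}\binom{d-1}{k-1}=\binom{n-1}{k-1}\binom{n-k}{d-k}$ all check out, and you correctly observe that $y\circ\sigma$ is automatically surjective so the output is always non-degenerate. (For the computation to make sense you are implicitly using $d\geq k$ and $n\geq d$ so that both $\tilde\nu$ measures are defined.)

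The paper, however, takes a quite different route. Rather than computing point masses, it uses the ``circle'' description of $\tilde\nu$: place $q_1,\dots,q_n$ around a circle in random order, then drop $r_1,\dots,r_d$ into distinct gaps to define the subspace; to pick the point inside, randomly permute the resulting $d$ blocks and drop $s_1,\dots,s_k$ into distinct gaps between blocks. The key step is a symmetry argument showing that the block permutation is redundant (equivalence classes under block permutation all have size $d!$), after which the whole two-step procedure collapses to: place the $q_i$, drop $r_1,\dots,r_d$, relabel a random $k$ of them as $s_1,\dots,s_k$ --- which is just the circle description of $\tilde\nu$ on $[k]^n$. So the paper's proof is a bijective coupling argument with essentially no computation, while yours is a direct algebraic verification. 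Your approach has the virtue of being entirely mechanical and easy to check line by line; the paper's approach explains \emph{why} the result holds (the nesting of the circle constructions) and would generalize more transparently to iterated compositions of subspaces.
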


\begin{proof}
To prove this we use the second method of defining the non-degenerate
equal-slices measure. That is, we choose a random subspace as
follows. First, we place $n$ points $q_1,\dots,q_n$ in a random order
around a circle. Next, we choose $d$ points $r_1,\dots,r_d$ and place
them in random gaps between the $q_i$, with no two of the $r_h$
occupying the same gap. Then the wildcard set $X_h$ will consist
of all $h$ such that $r_h$ is the first of the points $r_1,\dots,r_d$ if
you go clockwise round the circle from $q_i$. Let us call the
set of points $q_i$ with this property, together with $r_h$, the 
$h$th \textit{block}.

How do we then choose a random point $x$ in this subspace? We
can think of it as follows. We take the $d$ blocks and randomly 
permute them. We then randomly place $k$ points $s_1,\dots,s_k$
in gaps between blocks (with no two $s_j$ in the same gap). 
Then $x_i=j$ if $s_j$ is the first of the points $s_1,\dots,s_k$ if
you go clockwise round from $q_i$ (after the blocks have been
permuted). 

Now consider a second way of choosing a random point in 
$[k]^n$. We proceed exactly as above, except that this time we
do not bother to permute the blocks. We claim that this
gives rise to exactly the same distribution.

To see this, let us call two valid arrangements of the points
$q_1,\dots,q_n$ and $r_1,\dots,r_d$ \textit{equivalent} if
one is obtained from the other by a permutation of the blocks.
Then all the equivalence classes have size $d!$, so randomly
choosing an arrangement is the same as randomly choosing
an arrangement and then randomly changing it to an equivalent
arrangement.

Now the second way of choosing a random sequence amounts to
choosing the random points $q_1,\dots,q_n$ and $r_1,\dots,r_d$,
randomly choosing $k$ of the points $r_1,\dots,r_d$ and calling
them $s_1,\dots,s_k$ (in a random order) and finally using the 
points $q_1,\dots,q_n,s_1,\dots,s_k$ to define a point in $[k]^n$
in the usual way. But this is precisely the non-degenerate 
equal-slices measure on $[k]^n$.
\end{proof}

\subsection{A probabilistic version of the density Hales-Jewett theorem}

With the help of Corollary \ref{nondegeneracy} and 
Lemma \ref{equalslicestoequalslices}, it is straightforward to prove
that a probabilistic version of DHJ$_k$ follows from an
``equal-slices version". Let us begin by stating the equal-slices 
version.

\begin{theorem} \label{equalslicesdhj}
For every $\delta>0$ and every positive integer $k$ there exists
$n$ such that every set $A\subset[k]^n$ of equal-slices density
at least $\delta$ contains a combinatorial line.
\end{theorem}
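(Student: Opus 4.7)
The plan is to induct on $k$ and use a density-increment strategy of the kind sketched in \S\ref{densityincrement}. The case $k=1$ is trivial, and $k=2$ reduces to the equal-slices Sperner estimate already implicit in Theorem \ref{probabilisticsperner}. So fix $k \geq 3$ and assume DHJ$_{k-1}$ in its equal-slices form, and hence (by Proposition \ref{prop:mdhj-from-dhj}) MDHJ$_{k-1}$ as well. Let $A \subseteq [k]^n$ have equal-slices density $\delta$ and contain no combinatorial line. I will exhibit a combinatorial subspace $V$ of $[k]^n$, of dimension still very large, on which $A$ has equal-slices density at least $\delta + c(\delta)$ for some explicit $c(\delta) > 0$. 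Lemma \ref{equalslicestoequalslices} is the key engine: it says that the equal-slices measure on a random special subspace, composed with the equal-slices measure on $[k]^{\dim V}$, agrees with the equal-slices measure on $[k]^n$, so the increased density on $V$ transports to an equal-slices-dense subset of an honest $[k]^m$, and the argument iterates. After $O(1/c(\delta))$ iterations the density would exceed $1$, which is impossible.

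Producing the density increment breaks into the two stages described in \S\ref{densityincrement}. First, one finds a ``structured'' set $T \subseteq [k]^n$ on which $A$ has equal-slices density at least $\delta + c(\delta)$. The natural candidate is an \emph{$ij$-insensitive} set, that is, a set whose membership does not change when one freely swaps a coordinate value $i$ for $j$ at any position. Such a set is essentially a subset of a $[k-1]^n$-style structure in disguise. The absence of combinatorial lines in $A$ should force this correlation: arguing in the spirit of Theorem \ref{probabilisticsperner}, one parametrises an equal-slices-random line by picking two distinct markers along a random permutation, writes the line-count of $A$ as a sum of squares via Cauchy--Schwarz, and uses the symmetry of the equal-slices measure to read off the correlation of $A$ with an insensitive set. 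Second, one partitions $T$, viewed inside its ambient $[k-1]^n$-like structure, into combinatorial subspaces using MDHJ$_{k-1}$ applied with a suitable dimension parameter; an averaging argument then selects one such subspace inside which $A$ retains the increased density.

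The main obstacle is unambiguously the first stage: turning ``no combinatorial line'' into correlation with an insensitive set while keeping all losses polynomial in $\delta$ and losing only an acceptable number of coordinates. Two features of the equal-slices measure are what make this tractable, and both are essential. The permutation symmetry of equal-slices measure gives a clean Cauchy--Schwarz step, and Lemma \ref{equalslicestoequalslices} ensures that the insensitive structure, once located, can be rephrased as a genuine density statement on a lower-rank $[k-1]^n$ object to which the inductive hypothesis applies. The remark preceding Theorem \ref{probabilisticsperner} explains why one cannot hope to run this argument under the uniform measure: typical uniform lines are concentrated on a narrow band of slices, so the uniform density of $A$ does not control the uniform density of lines in $A$. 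The price of passing through equal-slices measure at each iteration, together with the invocation of MDHJ$_{k-1}$ (whose bounds degrade one level in the Ackermann hierarchy each time $k$ increases by $1$), is exactly what produces the tower-type bound for $k=3$ and the Ackermann-hierarchy dependence for general $k$ advertised in Theorem \ref{thm:our-dhj}.
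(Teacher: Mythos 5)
The paper's own proof of Theorem~\ref{equalslicesdhj} is vastly simpler than what you propose: it is Corollary~\ref{dhjimpliesedhj}, a one-step deduction from the uniform-measure theorem DHJ$_k$ (the paper's main result, proved separately) via the measure-transfer Lemma~\ref{equalslicestouniform}, which exhibits a single combinatorial subspace of dimension DHJ$(k,\delta/2)$ inside which the \emph{uniform} density of $A$ is at least $\delta/2$, whereupon DHJ$_k$ applies directly. What you have sketched instead is a re-derivation of the entire density-increment engine, which is the proof of DHJ$_k$ itself, not the proof of this corollary.

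Beyond the mismatch of scope, your proposal has a concrete gap. You claim Lemma~\ref{equalslicestoequalslices} is ``the key engine'' that keeps the iteration in equal-slices measure: that the equal-slices density increment found on the subspace $V$ can be transported to an honest equal-slices density on $[k]^{\dim V}$ and iterated. But Lemma~\ref{equalslicestoequalslices} applies only to \emph{special} subspaces, meaning those with no fixed coordinates (a partition of the whole ground set $[n]$ into $d$ wildcard blocks). The subspaces produced by the MDHJ$_{k-1}$ partitioning step (Lemma~\ref{partitioninglemma}) are generic $d$-dimensional subspaces of $[k]^n$ with $n-d$ (roughly) fixed coordinates, to which the lemma does not apply. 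As the paper itself warns in the sketch of \S5.3, ``equal-slices measure does not behave well when you restrict to combinatorial subspaces.'' This is precisely why the actual argument (Lemmas~\ref{restrict}, \ref{firstmainlemma}, and the discussion in \S\ref{sec:theproof}) produces a \emph{uniform}-measure density increment on the partition pieces, and then uses the transfer results (Corollary~\ref{uniformtoequalslices}, Lemma~\ref{equalslicestouniform}) to move back and forth between the two measures at each stage. An all-equal-slices iteration of the kind you describe cannot be made to close without those transfers, and once you insert them you are back to proving DHJ$_k$ by the paper's route rather than giving an alternative proof of the corollary.
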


We shall show later that Theorem \ref{equalslicesdhj} follows from
DHJ$_k$ itself. For now let us assume it and deduce a probabilistic
version. We shall write EDHJ$(k,\delta)$ for the smallest integer $m$
such that every subset $A\subset[k]^m$ of equal-slices density
at least $\delta$ contains a combinatorial line.

\begin{theorem} \label{probabilisticdhj}
Let $\delta>0$ and let $k$ be an integer greater than or equal to $2$.
Then there exists $\theta=$PDHJ$(k,\delta)$ such that for every 
$n\geq\max\{m,4k^2/\delta\}$ and every $A\subset[k]^n$ of
equal-slices density at least $\delta$ the equal-slices density of
the set of combinatorial lines in $A$ is at least $\theta$. Moreover,
if $m=$EDHJ$(k,\delta/4)$ then we can take $\theta=(\delta/9)(k+1)^{-m}$. 
\end{theorem}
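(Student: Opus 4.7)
\emph{Proof plan.} The idea is to combine Corollary~\ref{nondegeneracy} (which compares $\nu$ and $\tilde\nu$) with Lemma~\ref{equalslicestoequalslices} (the subspace-averaging identity), using the hypothesis EDHJ inside a typical $m$-dimensional subspace, where $m = \mathrm{EDHJ}(k,\delta/4)$. The rough plan is: pass from $\nu$ to $\tilde\nu$, average $\tilde\nu_V(A\cap V)$ over $\tilde\nu$-random special $m$-dimensional subspaces $V \subseteq [k]^n$ to locate a typical ``good'' subspace, apply EDHJ inside it to extract a combinatorial line, and then reapply the same averaging identity, this time to $[k+1]^n$, to turn ``at least one line per good $V$'' into a density of lines in $A$.

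\emph{Key steps.} First, Corollary~\ref{nondegeneracy} together with $n \geq 4k^2/\delta$ gives $\tilde\nu(A) \geq 3\delta/4$. Letting $V$ be a $\tilde\nu$-random special $m$-dimensional subspace of $[k]^n$, Lemma~\ref{equalslicestoequalslices} yields $\mathbb{E}_V[\tilde\nu_V(A\cap V)] = \tilde\nu(A) \geq 3\delta/4$, and Markov's inequality forces at least a $\delta/4$ $\tilde\nu$-fraction of $V$'s to be ``good'' in the sense that $\tilde\nu_V(A\cap V) \geq \delta/2$. For any good $V$, a second appeal to Corollary~\ref{nondegeneracy} inside $V \cong [k]^m$ gives $\nu_V(A\cap V) \geq \delta/2 - k^2/m \geq \delta/4$, so Theorem~\ref{equalslicesdhj} produces a combinatorial line $\ell \subseteq A\cap V$, corresponding to some $y_\ell \in [k+1]^m$.

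The same partition defining $V$ also determines a special $m$-dimensional subspace of $[k+1]^n$, and Lemma~\ref{equalslicestoequalslices} applied to $[k+1]^n$ gives the identity
\[
\tilde\nu_{[k+1]^n}(\text{lines in } A) = \mathbb{E}_V\bigl[\tilde\nu_{[k+1]^m}\bigl(\{y : \phi_V(y) \text{ is a line of } A\}\bigr)\bigr].
\]
For each good $V$ the integrand is at least $\tilde\nu_{[k+1]^m}(\{y_\ell\})$, which (using the crude bound $\binom{m}{a_1,\dots,a_{k+1}} \leq (k+1)^m$) is of order $(k+1)^{-m}$. A final application of Corollary~\ref{nondegeneracy} to $[k+1]^n$ transfers back to the $\nu_{[k+1]^n}$-statement at negligible cost, and bookkeeping the constants produces the stated $\theta = (\delta/9)(k+1)^{-m}$.

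\emph{Main obstacle.} The one genuine subtlety is ensuring $y_\ell$ is fully non-degenerate in $[k+1]^m$, since otherwise $\tilde\nu_{[k+1]^m}(\{y_\ell\}) = 0$ and the argument collapses. Non-degeneracy of $y_\ell$ amounts to $\ell$ using every value $j \in [k]$ among its fixed coordinates, equivalently to all $k$ vertices of $\ell$ being fully non-degenerate as elements of $[k]^m$. The fix is to apply EDHJ not to $A \cap V$ itself but to its intersection with the fully non-degenerate points of $[k]^m$; Corollary~\ref{nottooimbalanced} controls the density loss and shows it is absorbable into the slack between $\delta/2$ and $\delta/4$. Everything else is routine constant-tracking.
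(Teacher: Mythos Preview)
Your proposal is correct and follows essentially the same route as the paper: pass from $\nu$ to $\tilde\nu$ via Corollary~\ref{nondegeneracy}, average over $\tilde\nu$-random special $m$-dimensional subspaces using Lemma~\ref{equalslicestoequalslices}, locate good $V$'s by Markov, apply EDHJ inside each (after discarding degenerate points so that the resulting line is non-degenerate as a point of $[k+1]^m$), and then re-average via Lemma~\ref{equalslicestoequalslices} on $[k+1]^n$ before transferring back to $\nu$. The only cosmetic difference is in the fix for the non-degeneracy obstacle: the paper simply notes that degenerate strings have $\tilde\nu$-measure zero, so deleting them from $A\cap V$ is free in $\tilde\nu$-density, making your appeal to Corollary~\ref{nottooimbalanced} unnecessary.
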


\begin{proof} (Assuming Theorem \ref{equalslicesdhj}.) By Corollary
\ref{nondegeneracy} the non-degenerate equal-slices density
$\tilde\nu(A)$ of $A$ is at least $\delta-k^2/n$. Since
$n\geq 4k^2/\delta$, this is at least $3\delta/4$. 

Let $V$ be a random $m$-dimensional
special subspace of $[k]^n$, chosen according to the non-degenerate equal-slices 
measure. Then Lemma \ref{equalslicestoequalslices} implies that
the expected non-degenerate equal-slices density of $A$ inside
$V$ is also at least $3\delta/4$, from which it follows that with probability
at least $\delta/4$ this density is at least $\delta/2$. 

Let $V$ be a subspace inside which $A$ has non-degenerate equal-slices
density at least $\delta/2$. Remove from $A\cap V$ all degenerate
strings. The resulting set $A'\cap V$ still has density at least $\delta/2$.
By Corollary \ref{nondegeneracy} again, this implies that the equal-slices
density of $A'$ inside $V$ is at least $\delta/4$.   

But by our choice of $m$ this means that with probability at least
$\delta/4$ the set $A'\cap V$ contains a combinatorial line. Moreover,
since $A'\cap V$ contains no degenerate strings, this line must have
fixed coordinates of every single value. 

The number of such lines is at most $(k+1)^m$. Therefore, if you choose a 
random special subspace and inside it you choose a line according
to the non-degenerate equal-slices measure, then with probability
at least $(\delta/4)(k+1)^{-m}$ it will be a line in $A$.

But by Lemma \ref{equalslicestoequalslices} the way we have just chosen 
this line was according to the non-degenerate equal-slices measure. By the 
proof of Corollary \ref{nondegeneracy}, the equal-slices probability is at least
$(\delta/4)(k+1)^{-m}(1-(k+1)^2/n)$. By our assumption that
$n\geq 4k^2/\delta$ (and that $k\geq 2$), this is at least 
$(\delta/9)(k+1)^{-m}$.  
\end{proof}

\section{A modification of an argument of Ajtai and Szemer\'edi} \label{corners}

After Szemer\'edi proved his theorem on arithmetic progressions, it was
natural to try to prove the multidimensional version, which states that 
for every finite subset $H$ of $\mathbb{Z}^d$ and every $\delta>0$ there
exists $N$ such that every subset $A$ of $[N]^d$ of size at least $\delta N^d$
contains a subset of the form $aH+b$ with $a>0$. A full proof of this result
had to wait for the ergodic approach of Furstenberg: the result is due to
Furstenberg and Katznelson \cite{FK78}. However, Ajtai and Szemer\'edi
managed to prove the first genuinely multidimensional case of the theorem, 
where $H$ is the set $\{(0,0),(1,0),(0,1)\}$, by means of a clever deduction
from Szemer\'edi's theorem itself. Their argument is based on a density-increment
strategy, but it is not organized in quite the way that was described in
\S \ref{densityincrement}. However, it is possible to reorganize the
steps so that it follows that general outline very closely: in this section 
we briefly sketch this slight modification of their argument because it 
provides a template for our proof of the density Hales-Jewett theorem.

Let $\delta>0$, let $N$ be a large integer, and let $A$ be a subset of
$[N]^2$ of density at least $\delta$. Our aim is to show that $A$ contains
a triple of the form $\{(x,y),(x+d,y),(x,y+d)\}$ with $d>0$. We shall call
such configurations \textit{corners}. The theorem of Ajtai and Szemer\'edi
is the following.

\begin{theorem} \label{cornerstheorem}
For every $\delta>0$ there exists $N$ such that every subset $A\subset[N]^2$
of density at least $\delta$ contains a triple $\{(x,y),(x+d,y),(x,y+d)\}$ with $d>0$.
\end{theorem}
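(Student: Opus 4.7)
The plan is to prove Theorem \ref{cornerstheorem} by a density-increment iteration, following the outline of \S\ref{densityincrement} as closely as possible, since (as the introduction to this section advertises) the argument is meant to serve as a template for the main DHJ proof. Suppose $A \subset [N]^2$ has density $\delta$ and contains no corner. I will produce a sub-grid --- a Cartesian product $P \times Q$ of two arithmetic progressions in $[N]$ with a \emph{common} difference, which is affinely isomorphic to $[N']^2$ via a map that sends corners to corners --- on which $A$ has density at least $\delta + c(\delta)$. Iterating at most $O(1/c(\delta))$ times forces the density above $1$, a contradiction, provided $N$ was large enough to begin with.

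The increment is produced in two stages. In the first stage I identify an intermediate ``simple'' set in the sense of \S\ref{densityincrement}: a Cartesian product $X \times Y \subset [N]^2$ with $|X|, |Y| \geq \alpha(\delta) N$, on which $A$ has density at least $\delta + c_1(\delta)$. The guiding intuition is that a ``quasirandom'' set of density $\delta$ would contain on the order of $\delta^3 N^3$ corners by a straightforward count, so a corner-free set must fail to be quasirandom --- concretely, it must correlate with some product of marginals. Making this precise is essentially a Cauchy--Schwarz / pigeonhole argument over the distributions of densities of rows and columns of $A$: either $A$'s row densities are already sufficiently non-uniform to exhibit a dense column-set $Y$, or, after restricting to the balanced portion, a direct count forces many horizontal ``potential corner bases'', which by hypothesis cannot be completed upward, producing an excess of density on a Cartesian product of rich rows and rich columns.

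In the second stage I apply Szemer\'edi's theorem to the large sets $X$ and $Y \subset [N]$. Since both have density at least $\alpha(\delta)$, they contain long arithmetic progressions, and with only polynomial loss one can partition (nearly all of) $X \times Y$ into sub-grids of the form $P \times Q$ with $P \subset X$, $Q \subset Y$ arithmetic progressions of a common difference and comparable length. An averaging argument then isolates a single $P \times Q$ on which $A$ retains the density boost (up to a negligible loss absorbed into $c(\delta)$), and the induction step closes.

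The main obstacle is Stage 1: cleanly converting the combinatorial hypothesis ``no corners'' into a structural statement about correlation with a Cartesian product. Everything downstream (the passage to APs of common difference, the averaging, the iteration count) is routine modulo Szemer\'edi's theorem, and it is precisely the invocation of Szemer\'edi in Stage 2 that propagates the tower-type bounds into $c(\delta)$, in line with the Ackermann-type dependencies flagged in the introduction. For the template role this section plays for DHJ, the crucial feature to emphasize is the two-step architecture: ``no structure $\Rightarrow$ density boost on a simple set $\Rightarrow$ density boost on an affine subcopy of the original object''.
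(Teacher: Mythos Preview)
Your overall architecture---density increment via correlation with a Cartesian product, then Szemer\'edi to pass to a grid---matches the paper, and your Stage~2 is essentially the paper's Steps~4--6. The gap is in Stage~1.

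You describe Stage~1 as a Cauchy--Schwarz/quasirandomness argument on row and column densities: either the row densities are non-uniform, or ``many horizontal potential corner bases \dots\ cannot be completed upward, producing an excess of density on a Cartesian product of rich rows and rich columns.'' This last clause is where the argument is not a proof. If $(x,y)$ and $(x+d,y)$ are both in $A$, the excluded point is $(x,y+d)$; as $(x,d)$ ranges over pairs in a fixed row, the excluded points do not form a Cartesian product in any obvious way, and no amount of row/column balancing repairs this. Corners involve \emph{three} directions (horizontal, vertical, and the anti-diagonal $x+y=\text{const}$), and a two-direction Cauchy--Schwarz does not by itself decouple them into a $12$-set.

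The paper's Stage~1 is different and is the point of the whole section. One first finds a dense \emph{anti-diagonal} $\{(x,y):x+y=t\}$ carrying at least $\delta N/2$ points of $A$. Listing these as $(x_1,y_1),\dots,(x_{2m},y_{2m})$ with $x_1<\dots<x_{2m}$, set $X=\{x_1,\dots,x_m\}$ and $Y=\{y_{m+1},\dots,y_{2m}\}$. Then $X\times Y$ is \emph{disjoint} from $A$, because any $(x_i,y_j)\in A$ with $i\le m<j$ would complete a corner with $(x_i,y_i)$ and $(x_j,y_j)$. Now $A$ is absent from a $12$-set of density $\gtrsim\delta^2$, so averaging over the three pieces $X\times Y^c$, $X^c\times Y$, $X^c\times Y^c$ of the complement gives a $12$-set on which $A$ has density at least $\delta+c\delta^3$.

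This diagonal trick is not incidental: it is exactly the step that the paper transports to DHJ (the ``diagonal'' becomes ``fix the $3$-set $Z$ and look at the $\{1,2\}$-valued coordinates''), so a proposal that replaces it by a vague quasirandomness argument misses precisely the template feature the section is meant to exhibit.
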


Before we sketch the proof, we make the general remark that there 
are three privileged directions, horizontal, vertical and parallel to
the line $x+y=0$, which correspond to the three lines that are defined
by pairs of points from the set $\{(0,0),(1,0),(0,1)\}$. Indeed, one could
argue that the formulation of the problem is an unnatural one, and that
instead of the grid $[N]^2$ one should consider a triangular portion of
a triangular lattice, so that there is a symmetry between the three
directions. We shall not do this, but when we come to relate the argument
of this section to the proof of DHJ, it will help to bear this point in mind.

We shall regard certain subsets of $[N]^2$ as ``simple" or ``somewhat 
structured". We define a \textit{1-set} to be a subset of the form $X\times[N]$.
We call such sets 1-sets because whether or not a point $(x,y)$ belongs to
$X\times[N]$ depends only on its first coordinate $x$. A more symmetrical,
and therefore preferable, explanation is this. We represent our points 
not by pairs $(x,y)$ with $x,y\in[N]$ but as \textit{triples} $(x,y,z)$ such
that $x,y\in[N]$ and $x+y+z=2N+1$. (We have chosen $2N$ so that $z$ 
lies between $1$ and $2N-1$, but all we care about is that $x+y+z$ should
be constant.) It is still true that whether or not the point represented by a 
triple $(x,y,z)$ belongs to $X\times[N]$ depends only on $x$. In other
words, if $(x,y,z)$ belongs to a $1$-set, then so does $(x,y+u,z-u)$ for
every $u$. Another way of looking at this, which turns out to correspond
more closely to what we shall do when we prove DHJ, is think of 
a 1-set as a \textit{23-insensitive} set, meaning that membership of
the set is unaffected by changes to the second and third coordinates.

Another special kind of set is one of the form $X\times Y$. This is
the intersection of the 1-set $X\times[N]$ and the 2-set $[N]\times Y$.
In this section we shall call it a 12-set (which is not to be confused
with a 12-insensitive set, which we are calling a 3-set).

Now let us sketch the argument that gives us corners. The basic
idea is a \textit{density increment strategy}, which has been used
to prove many density theorems. (A few examples can be found in
\cite{Rot53}, \cite{Sze75}, \cite{Gow01}, \cite{Shk06a}, \cite{Shk06b},
and \cite{LM08}, but this is by no means an exhaustive list.) We shall
show that if $A$ does not contain a corner, then there is some subset
of $[N]^2$ that looks like $[m]^2$, and inside that subset $A$ has
an increased density. We can iterate this argument until eventually
we reach a contradiction when the relative density of $A$ inside some
subset becomes greater than 1. 

\subsection{Finding a dense diagonal}

The first step is to find a set of the form $\{(x,y):x+y=t\}$ that contains
a reasonable number of points of $A$. Since there are $2N-1$ such 
sets and $A$ has size at least $\delta N^2$, at least one such set
contains at least $\delta N/2$ points of $A$. 

\subsection{A dense 12-set that is disjoint from $A$}

Suppose that we have found $t$ such that the number of points of
$A$ in the diagonal $\{(x,y):x+y=t\}$ is at least $\delta N/2$. Let us
write these points as $(x_1,y_1),\dots,(x_{2m},y_{2m})$ with 
$x_1<\dots<x_{2m}$. If the number of points of $A$ on the diagonal
is odd, we just omit one of them. Let $X=\{x_1,\dots,x_m\}$ and
let $Y=\{y_{m+1},\dots,y_{2m}\}$. Then no point of $X\times Y$ 
can belong to $A$, since if $(x_i,y_j)\in A$ then the three points
$(x_i,y_j),(x_j,y_j)$ and $(x_i,y_i)$ all belong to $A$, and they 
form a corner since $x_j-x_i=y_i-y_j>0$. The size of $X\times Y$
is $m^2$, and $m\geq\lfloor\delta N/4\rfloor$, so (ignoring the
integer part) $X\times Y$ has density at least $\delta^2/16$ or so.

\subsection{A dense 12-set that correlates with $A$}

If $A$ is disjoint from a dense 12-set $X\times Y$ then it must make 
up for this with an increased density in the complement of $X\times Y$.
However, the complement of $X\times Y$ splits up into the three 12-sets
$X\times Y^c$, $X^c\times Y$ and $X^c\times Y^c$. A simple averaging
argument shows that in at least one of these three 12-sets the relative
density of $A$ is at least $\delta+\delta^3/48$. Thus, we have sets
$U$ and $V$ such that the density of $A$ inside the 12-set $U\times V$ is
at least $\delta+\delta^3/48$. Moreover, a very crude argument shows
that the $U\times V$ must have density at least $\delta^3/48$ inside 
$[N]^2$.

\subsection{A dense 1-set can be almost entirely partitioned into large grids}

As mentioned earlier, our eventual aim is to find a subset of $[N]^2$ of a 
similar type, inside which $A$ has increased density. The subsets that
will interest us are \textit{grids}, which are sets of the form $P\times Q$,
where $P$ is an arithmetic progression and $Q$ is a translate of $P$. 

Given a dense 1-set $X\times[N]$, we can partition almost all of it into grids as
follows. Suppose that the density of $X$ is $\theta$ and let $\epsilon$ be some
positive constant that is much smaller than $\theta$ (but independent of $N$).
Since $X$ has density at least $\epsilon$, by Szemer\'edi's theorem it contains
an arithmetic progression $P_1$ of length at least $m$, where $m$ tends to infinity 
with $N$. If the set $X\setminus P_1$ still has density at least $\epsilon$, then it contains 
an arithmetic progression of length $m$. Indeed, we can partition $X$ into 
sets $P_0,P_1,\dots,P_r$, where $P_1,\dots,P_r$ are arithmetic progressions
of length at least $m$ and $P_0$ is a residual set of density less than $\epsilon$. 

For each $i$, we can then straightforwardly partition almost all of $P_i\times[N]$ 
into sets of the form $P_i\times Q_{ij}$, where each $Q_{ij}$ is a translate of
$P_i$. (It helps if each $P_i$ has diameter at most $\epsilon N$, but
it is easy to ensure that this is the case.) We can therefore partition all but an
arbitrarily small proportion of $X\times[N]$ into grids of size tending to infinity
with $N$.

\subsection{A dense 12-set can be almost entirely partitioned into large grids}

It is easy to deduce from the previous step a similar statement about 12-sets.
Indeed, let $X$ and $Y$ be dense sets, and begin by partitioning almost all of
$X\times[N]$ into large grids $P_i\times Q_i$. (We have changed the indexing 
of these grids.) The intersection of $X\times Y$ with any of these grids $P_i\times Q_i$
is $P_i\times(Y\cap Q_i)$, since $P_i\subset X$. Therefore,
if $Y\cap Q_i$ has positive density inside $Q_i$, we can use the previous step
to partition almost all of $P_i\times (Y\cap Q_i)$ into subgrids, still with size
tending to infinity. By a simple averaging argument, the proportion of points 
in $X\times Y$ that are contained in grids $P_i\times Q_i$ inside which $Y$ is sparse
is small. So by this means we have partitioned almost all of $X\times Y$ into
grids with sizes that tend to infinity. 

\subsection{A density increment on a large grid}

By Step 3, we have a dense 12-set $X\times Y$ inside which the density of $A$
is at least $\delta+\delta^3/48$. By Step 5 we can partition almost all of $X\times Y$
into large grids. If we choose ``almost" appropriately, we can ensure that the
density of that part of $A$ that lies in these large grids is at least $\delta+\delta^3/100$.
But then by averaging we can find a large grid $P\times Q$ such that the density
of $A$ inside $P\times Q$ is at least $\delta+\delta^3/100$. This is exactly what
we need for our density-increment strategy, so the proof is complete.

\section{A detailed sketch of a proof of DHJ$_3$}

In this section, we shall explain in some detail how our proof works in the case 
$k=3$. As mentioned in the previous section, the structure of our proof is closely 
modelled on the structure of the argument of Ajtai and Szemer\'edi (in 
the slightly modified form in which we have presented it). However, to make
that clear, we need to explain what the counterparts are of concepts such 
as ``grid", ``12-set" and the like. So let us begin by discussing a dictionary
that will guide us in our proof.

Everything flows from the following simple thought: whereas a typical point
in $[N]^2$ can be thought of as a triple $(x,y,z)$ such that $x+y+z=2N+1$,
a typical point in $[3]^n$ can be thought of as a triple of disjoint sets
$(X,Y,Z)$ such that $X\cup Y\cup Z=[n]$: to turn such a triple into a sequence
$(x_1,\dots,x_n)$ let $x_i=1$ if $i\in X$, 2 if $i\in Y$ and 3 if $i\in Z$.

A corner in $[N]^2$ can be defined symmetrically as a triple of points of the 
form $\{(x+u,y,z),(x,y+u,z),(x,y,z+u)\}$ such that $x+y+z+u=2N+1$ and $u\ne 0$. 
This translates very nicely: a combinatorial line is a triple of points of the form 
$\{(X\cup U,Y,Z),(X,Y\cup U,Z),(X,Y,Z\cup U)\}$ such that $X,Y,Z$ and $U$
partition $[n]$ and $U\ne\emptyset$.

A diagonal in $[N]^2$ is a set of the form $D_t=\{(x,y,z):x+y=t\}$. It therefore
makes sense to define a ``diagonal" in $[3]^n$ to be a set of the form
$\{(X,Y,Z):X\cup Y=T\}$ for some subset $T\subset[n]$. In other words,
it is the collection of all triples $(X,Y,Z)$ that partition $[n]$, but now 
$Z$ is a fixed set (equal to the complement of $T$ above).

Recall that a 1-set in $[N]^2$ is a set of the form $X\times[N]$, or in symmetric
notation a set of the form $\{(x,y,z):x\in X\}$. The obvious generalization of 
this notion to $[3]^n$ is a set of the form $\{(X,Y,Z):X\in\mathcal{X}\}$ for
some collection $\mathcal{X}$ of subsets of $[n]$. A subset $S$ of $[3]^n$
is a 1-set if and only if it is \textit{23-insensitive} in the following sense:
if $(X,Y,Z)\in S$, then $(X,Y',Z')\in S$ whenever $Y'\cup Z'=Y\cup Z$.
Equivalently, if a sequence $x\in [3]^n$ belongs to $S$, then so do all 
sequences that can be formed from $x$ by changing some $2$s to $3$s 
and/or some $3$s to $2$s.

The natural definition of a 12-set is now clear: as in the case of subsets of
$[N]^2$, it should be the intersection of a 1-set with a 2-set.

We should also mention that the notion of Cartesian product has an 
analogue. The Cartesian product of $X$ and $Y$ is the intersection of
the 1-set $X\times[N]$ with the 2-set $[N]\times Y$. So if we are given
two collections $\mathcal{X}$ and $\mathcal{Y}$ of subsets of $[n]$,
then the analogue of their Cartesian product ought to be the 12-set
$\{(X,Y,Z):X\in\mathcal{X},Y\in\mathcal{Y},X\cap Y=\emptyset\}$. 
Since $X$ and $Y$ determine $Z$, we can think of this as a set of pairs, and then the
resemblance with a true Cartesian product is that much closer: it
is (equivalent to) the set of all pairs $(X,Y)$ such that $X\in\mathcal{X}$,
$Y\in\mathcal{Y}$ \textit{and $X$ and $Y$ are disjoint}. We
shall call this the \textit{disjoint product} of $\mathcal{X}$ and $\mathcal{Y}$
and write it as $\mathcal{X}\boxtimes\mathcal{Y}$.

There is one concept that has a non-obvious (though still natural) 
translation from the $[N]^2$ world to the $[3]^n$ world, namely that
of a grid. At first sight, it might seem extremely unlikely that the 
Ajtai-Szemer\'edi can be generalized to give a proof of DHJ$_3$. After all, their proof 
could be regarded as the beginnings of a sort of induction: they deduce
the first non-trivial case of the two-dimensional theorem from the 
full one-dimensional theorem (namely Szemer\'edi's theorem).
If one is attempting to prove DHJ$_3$, the obvious candidate for 
a statement ``one level down" is DHJ$_2$, but that is a much less
deep statement than Szemer\'edi's theorem. So it seems that our only
hope will be if Ajtai and Szemer\'edi did not after all need a tool as
powerful as Szemer\'edi's theorem.

One of the key ideas of our proof is that this is indeed the case, though
the result we need is not DHJ$_2$ but its multidimensional version
MDHJ$_2$ proved in the last section. The appropriate replacement
of the notion of a long arithmetic progression in $[N]$ is a combinatorial
subspace of $[2]^n$. We then have to decide what the analogue of
a grid is. Given the concepts so far, it should be something like the
disjoint product of two ``parallel" combinatorial subspaces of
$[2]^n$, and we would like that to give us a combinatorial subspace
of $[3]^n$ (since we want the analogue of a grid to be a structure 
that resembles $[3]^n$). All this can be done. A $d$-dimensional
combinatorial subspace of $[2]^n$ is defined by taking disjoint sets 
$X_0,X_1,\dots,X_d$ and defining $U$ to be the set of all unions
$X_0\cup\bigcup_{i\in A}X_i$ such that $A\subset[d]$. It is natural
to define two such subspaces to be parallel if they are defined by
sequences of sets $(X_0,X_1,\dots,X_d)$ and $(Y_0,Y_1,\dots,Y_d)$
such that $X_i=Y_i$ for every $i\geq 1$, and also, since we want to 
take a disjoint product, to add the condition that $X_0$ and $Y_0$ 
should be disjoint. If we do that, then a typical point in the 
disjoint product is a pair $(X,Y)$ such that $X=X_0\cup\bigcup_{i\in A}X_i$
and $Y=Y_0\cup\bigcup_{i\in B}X_i$ such that $A\cap B=\emptyset$.
If we set $Z=[n]\setminus(X\cup Y)$, we see easily that this is
precisely a $d$-dimensional combinatorial subspace of $[3]^n$: $X_0$ and $Y_0$
are the sets where the fixed coordinates are 1 and 2, respectively,
and the wildcard sets are $X_1,\dots,X_d$.

With these concepts in mind, let us now give an overview of the
proof of DHJ$_3$. (To generalize this discussion to DHJ$_k$ is
straightforward: the Ajtai-Szemer\'edi argument can be used to
deduce a ``$k$-dimensional corners" theorem from the
$(k-1)$-dimensional Szemer\'edi theorem, and that provides a 
template for our deduction of DHJ$_{k+1}$ from MDHJ$_k$,
which itself can be deduced from PDHJ$_k$, which follows
from DHJ$_k$.)

\subsection{Finding a dense diagonal}

Recall that we are defining a diagonal in $[3]^n$ to be a set
of the form $\{(X,Y,Z):X\cup Y=T\}$. Equivalently, one fixes a
set $Z$ and defines the associated diagonal to be the set of
all sequences in $[3]^n$ that take the value $3$ in $Z$ and
1 or 2 everywhere else. 

Obviously the diagonals form a partition of $[3]^n$, so if 
$A\subset[3]^n$ is a set of density $\delta>0$ then by averaging 
we can find a diagonal inside which $A$ still has density $\delta$. 
We can also ensure that this diagonal is not too small by throwing
away the very small fraction of $[3]^n$ that is contained in small
diagonals.

It is not completely obvious at this stage what probability measure 
we want to take on $[3]^n$, but note that the argument so far is 
general enough to apply to any measure.

\subsection{A dense 12-set that is disjoint from $A$}

What should we do next? In the equivalent stage of the corners
argument we were assuming that $A$ contained no corners. Then 
every pair of points of $A$ in our dense diagonal implied that a
third point (the bottom of the corner of which those two points
formed the diagonal) did not belong to $A$. Moreover, the set
of points that we showed did not belong to $A$ formed a 
dense 12-set. So now we would like to do something similar.

At first, the situation looks very promising, since if $(X,Y,Z)$
and $(X',Y',Z)$ are two points with $X\subset X'$, both belonging
to the diagonal determined by the set $Z$,
then we can set $U=X'\setminus X$ and write these two points 
as $(X,Y\cup U,Z)$ and $(X\cup U,Y,Z)$. Then the point
$(X,Y,Z\cup U)$ cannot lie in $A$, since otherwise the three
points would form a combinatorial line in $A$. 

So what can we say about the set of all forbidden points? These
are all points of the form $(X,Y,Z\cup U)$ such that both
$(X\cup U,Y,Z)$ and $(X,Y\cup U,Z)$ belong to $A$. Now
$Z$ is a fixed set (that defines the particular diagonal we are
talking about), so if we are presented with a point $(X,Y,Z\cup U)$
then we can work out what $U$ is. Let $\mathcal{X}$ be the 
set of all $X\subset [n]\setminus Z$ such that 
$(X,[n]\setminus(X\cup Z),Z)\in A$. Then the set of all
$(X,Y,Z\cup U)$ such that $(X,Y\cup U,Z)\in A$ is precisely
the set of all $(X,Y,Z\cup U)$ such that $X\in\mathcal{X}$.
This would be a 1-set if we were not insisting that every
point took the value 3 in the set $Z$. However, the set of
all such points forms a subspace of $[3]^n$ (of dimension
$n-|Z|$), and inside that subspace we have a 1-set. 
Similarly, the set of all $(X,Y,Z\cup U)$ such that 
$(X\cup U,Y,Z)\in A$ is a 2-set inside the same subspace:
this time we define $\mathcal{Y}$ to be the set of all 
$Y$ such that $([n]\setminus(Y\cup Z),Y,Z)\in A$ and 
take the set of all points $(X,Y,Z\cup U)$ such that
$Y\in\mathcal{Y}$. 

Thus, the good news is that we have found a 12-set
that is disjoint from $A$, but the bad news is that this
12-set is in a subspace of $[3]^n$ rather than in the
whole space. 

\subsection{A dense 12-set that correlates with $A$}

In the proof of the result about corners, we used a simple
averaging argument at this stage: if there is a dense 
12-set that is disjoint from $A$ then one of three other
12-sets must have an unexpectedly large intersection
with $A$. However, we cannot argue as straightforwardly
here, since the 12-set we have found is \textit{not} dense.

There are in fact two problems here. The first is the obvious
one that we have restricted to a subspace, the density of
which will be very small. To see this, note that for almost
all points $(X,Y,Z)$ in $[3]^n$ the sets $X$, $Y$ and $Z$
have size very close to $n/3$. Therefore, it may well be
that $A$ consists solely of such points, in which
case when we pass to the subspace that takes the value
3 on some fixed $Z$ we will lose approximately $n/3$
dimensions. 

The second problem is that even when we do restrict
to such a subspace we find that $A$ may well have
tiny density, since almost all triples in such a subspace
will be of the form $(X,Y,Z\cup U)$ with $X$, $Y$ and
$U$ all of approximately the same size, and it may
well be that no such triples belong to $A$, since then
$X$, $Y$ and $Z\cup U$ do \textit{not} all have approximately
the same size. 

To get round these problems, we do two things. First,
we do not use the uniform measure on $[3]^n$ but instead
the equal-slices measure. This deals with the second 
problem, since for an equal-slices random triple $(X,Y,Z)$ 
it is no longer the case that the sets $X$, $Y$ and 
$Z$ almost always have approximately the same
size. Second, we argue that we may assume that the 
restriction of $A$ to almost all subspaces has density
at least $\delta-\eta$ for some very small $\eta$. This 
observation is standard in proofs of density theorems: 
roughly speaking, if $A$ often has smaller density than
this, then somewhere it must have substantially larger 
density (by averaging), and then we have completed
the iteration step in a particularly simple way. But if
$A$ almost always has density at least $\delta-\eta$,
then when we use an averaging argument to find
a diagonal that contains many points of $A$, we
can also ask for $A$ to have density at least $\delta-\eta$
inside the subspace we are forced to drop down to.

Once all these arguments have been made precise,
the conclusion is that there is a subspace $V$ of $[3]^n$
of reasonably large dimension such that the density of
$A$ inside $V$ is at least $\delta-\eta$, and a dense 12-set 
inside that subspace that is disjoint from $A$. Then a 
simple averaging argument similar to the one in the
corners proof gives us a dense 12-set in that subspace
inside which the relative density of $A$ is at least $\delta+c(\delta)$.
(For this we must make sure we choose $\eta$ sufficiently 
small for the small density decrease to be more than
compensated for by the subsequent density increase.)

Thus, although the statement and proof of this step are
directly modelled on the corresponding step for the corners
proof, there are some important differences: we show that $A$
correlates \textit{locally} (that is, in some subspace of 
density that tends to zero) with a 12-set, whereas in the
corners proof a global correlation is found. We do not know
whether a dense subset of $[3]^n$ that contains no 
combinatorial line must correlate globally with a 12-set.
(Strictly speaking, we \textit{do} know, since we have
proved that every dense subset of $[3]^n$ contains a
combinatorial line. However, one can obtain a better
formulation of the question by replacing the assumption
that the set contains no lines by the assumption that it
contains few lines.) A second difference is that although
we start with a set $A$ that is equal-slices dense, the
local correlation that the proof ends up giving is with
respect to the uniform measure. (There is a general
principle operating here, which is that equal-slices
measure does not behave well when you restrict
to combinatorial subspaces.)

\subsection{A dense 1-set can be almost entirely partitioned into large combinatorial subspaces}
\label{partitioning}

Bearing in mind our dictionary, the next stage of the proof should be
to partition almost all of a dense 1-set into combinatorial subspaces
of dimension tending to infinity.

Let us recall what a 1-set, or a 23-insensitive set, is. It is a set $A\subset[3]^n$
with the property that if $x\in A$, $y\in[3]^n$ and $\{i:x_i=1\}=\{i:y_i=1\}$, then $y\in A$.
Equivalently, using set-theoretic notation, it is a set of triples of the
form $\{(X,Y,Z):X\in\mathcal{X}\}$ for some collection $\mathcal{X}$ of
subsets of $[n]$.

At this stage of the corners proof, one starts with a 1-set $X\times [N]$,
applies Szemer\'edi's theorem over and over again to remove arithmetic 
progressions $P_i$ from $X$ until it is no longer dense, and then partitions
the sets $P_i\times[N]$ into sets of the form $P_i\times Q_{ij}$, where the
$Q_{ij}$ are translates of $P_i$. 

If we follow the proof of the corners theorem, then we should expect
an argument along the following lines. We start with the 1-set
$\{(X,Y,Z):X\in\mathcal{X}\}$. We then partition almost all of $\mathcal{X}$, 
which can be thought of as a subset of $[2]^n$, into large combinatorial 
subspaces using repeated applications of MDHJ$_2$. For each one
of these subspaces $U$, we then partition the disjoint product 
$U\boxtimes[3]^n$ into combinatorial subspaces.

Unfortunately, this last step does not work, which leads us to the second
point where our argument is more complicated than that of Ajtai and Szemer\'edi,
and the second place where we use localization to get us out of trouble.
The difficulty is this. If $U$ is the $d$-dimensional subspace defined by 
the sets $(X_0,X_1,\dots,X_d)$, then $U\boxtimes[3]^n$ consists of 
all triples $(X,Y,Z)$ of disjoint sets such that $X$ is a union of $X_0$
with some of the sets $X_i$. A combinatorial subspace inside this set must 
have wildcard sets that are unions of the $X_i$ with $i\geq 1$, which means 
that it cannot contain any point $(X,Y,Z)$ such that $Y\cap X_i$ and 
$Z\cap X_i$ are non-empty for every $i$.  

This is a genuine difficulty, but we can get round it. The way we do so may at
first look a little dangerous, but it turns out to work. The argument proceeds
in five steps as follows. 

\begin{itemize}
\item Let $B$ be a 23-insensitive set of density $\eta$. Let $m$ be a positive
integer to be chosen later (for now it is sufficient to think of it as a number that
tends to infinity but is much much smaller than $n$), and choose a random 
element of $[3]^n$ by randomly
permuting the ground set $[n]$ and then taking a pair $(x,y)$, where $x$ is chosen 
uniformly from $[2]^m$ and $y$ is chosen uniformly from $[3]^{n-m}$. (Here we
are regarding $x$ as supported on the first $m$ elements of the randomly 
permuted ground set and $y$ as supported on the last $n-m$ elements.)
For sufficiently small $m$, the distribution of $(x,y)$ is approximately uniform,
so if for each $y$ we let $E_y=\{x:(x,y)\in B\}$, then $E_y$ has density at
least $\eta/3$ in $[2]^m$ for a set of $y$ of density at least $\eta/3$. (This is
not the main reason that we need $m$ to be small, so this step will be
true with a great deal of room to spare.)

\item For each such $y$ use MDHJ$_2$ to find a $d$-dimensional 
combinatorial subspace $U$ of $[2]^m$ that lives inside $E_y$, and
hence has the property that $(x,y)\in B$ for every $x\in U$. (Here, $d$ 
depends on $m$ and $\eta$.)

\item By the pigeonhole principle, we can find a subset $T$ of $[3]^{n-m}$ 
of density $\theta=\theta(m,d,\eta)$ and a combinatorial subspace
$U\subset[2]^m$ such that $U\times T\subset B$. Let us choose $T$
to be maximal: that is, $T$ is the set of all $y\in[3]^{n-m}$ such that
$U\times\{y\}\subset B$. Since $B$ is a 
23-insensitive set, it follows that if we allow the wildcard sets of
$U$ to take the value 3 as well, then all the resulting points will still belong
to $B$. That is, we have the same statement as above but now $U$
is a combinatorial subspace of $[3]^m$. This is the point of our
argument ``where the induction happens".

\item $U\times T$ is a union of combinatorial subspaces, and there are
quite a lot of them. It is tempting at this stage to remove them from $B$
and start again. But unfortunately there is no reason to suppose that
$B\setminus (U\times T)$ will be 23-insensitive. (We give an example
to illustrate this just after this proof outline.)
However, this turns out not to be too serious a problem, because for
every $x\in X$ the set $(B\setminus(U\times T))\cap(\{x\}\times[3]^{n-m})$
is a 23-insensitive subset of $\{x\}\times[3]^{n-m}$. In other words, we
can partition $B\setminus(U\times T)$ into locally 23-insensitive sets
and run the argument again.

\item Using this basic idea, we develop an iterative proof. Whenever
we are faced with a set of small density we regard it as part of our
``error set" and leave it alone. And from any set of large density we
remove a disjoint union of combinatorial subspaces and partition
the rest into locally 23-insensitive sets. If we are careful, we can
choose $m$ in such a way that the combinatorial subspaces have
dimension that tends to infinity with $n$, but the number of iterations
before there are no dense sets left is smaller than $n/m$, so we never 
``run out of dimensions". In this way we prove that a 23-insensitive
set can almost all be partitioned into combinatorial subspaces.
\end{itemize}

Here, as promised, is an example of a 23-insensitive set $B$ such
that removing $U\times T$ leaves us with a set that is no longer
23-insensitive. Let $m=2$ and $n=3$ and let $B$ be the 23-insensitive set 
$\{11, 22, 23, 32, 33\}\times\{2,3\}$. Then $B$ contains the set
$\{11,22,33\}\times\{1,2,3\}$, which is of the form $U\times T$ with
$U$ a subspace and $T$ 23-insensitive (and it is the only non-empty
subset of this form). If we remove this from $B$, we end up with the 
set $\{23,32\}\times\{2,3\}$, which is no longer 23-insensitive. It
is, however, 23-insensitive in the third coordinate.

\subsection{A dense 12-set can be almost entirely partitioned into large
combinatorial subspaces}

This stage of the argument is very similar to the corresponding stage
of the corners argument and needs little comment. One simply checks
that the intersection of a 13-insensitive set with a combinatorial subspace
is 13-insensitive inside that subspace (which is almost trivial). Then,
given an intersection of a 23-insensitive set and a 13-insensitive set,
one applies the result of the previous section to the 23-insensitive
set, partitioning almost all of it into subspaces, and then applies the
same argument to the 13-insensitive set inside each subspace.

\subsection{A density increment on a large combinatorial subspace}

Again, this stage of the argument is very similar to the corresponding 
stage of the corners argument. If $A$ has increased density on a 
(locally) 23-insensitive set, and if that set can be almost entirely
partitioned into combinatorial subspaces of dimension tending to
infinity, then by averaging we must be able to find one of these
combinatorial subspaces inside which $A$ has increased density.

We are not quite in a position to iterate at this point, because we 
started out with a set of equal-slices measure $\delta$ and ended 
up finding a combinatorial subspace on which the \textit{uniform}
density had gone up. However, it turns out to be quite easy to 
pass from that to a further subspace inside which $A$ has 
an equal-slices density increment, at which point we are done.

\section{Measure for measure}

As we have already mentioned, there are some arguments that
work better when we use product measures, and others when
we use equal-slices measures. This appears to be an unavoidable
situation, so we need a few results that will tell us that if we can
prove a statement in terms of one measure then we can deduce
a statement in terms of another. In this section, we shall collect
together a number of such results, so that later on in the paper
we can simply apply them when the need arises. The results we prove
are just technical calculations, so the reader may prefer to take 
them on trust. The statements we shall need later are Corollary
\ref{uniformtoequalslices}, Corollary \ref{uniformtosmallerequalslices}
and Lemma \ref{equalslicestouniform}.

We begin with a standard definition that will tell us when we regard 
two probability measures as being close.

\begin{definition}
Let $\mu$ and $\nu$ be two probability measures on a finite set $X$.
The \emph{total variation distance} $d(\mu,\nu)$ is defined to be
$\max_{A\subset X}|\mu(A)-\nu(A)|$. 
\end{definition}

In order to prove that we can switch from one probability measure to
another, we shall make use of the following very simple general
principle.

\begin{lemma}\label{measuretransfer}
Let $\mu$ and $\nu_1,\dots,\nu_m$ be probability measures, let
$a_1,\dots,a_m$ be positive real numbers that add up to 1, and
suppose that $d(\mu,\sum_{i=1}^ma_i\nu_i)\leq\eta$. Then for
every $\alpha\in[0,1]$ and every set $A$ such that $\mu(A)\geq\alpha$
there exists $i$ such that $\nu_i(A)\geq\alpha-\eta$.
\end{lemma}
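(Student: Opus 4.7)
The plan is to unfold the definition of total variation distance and then apply a one-line averaging argument. The key observation is that the hypothesis $d(\mu, \sum_{i=1}^m a_i \nu_i) \le \eta$ applied to the particular set $A$ yields
\[
\Bigl|\mu(A) - \sum_{i=1}^m a_i \nu_i(A)\Bigr| \le \eta,
\]
so in particular $\sum_{i=1}^m a_i \nu_i(A) \ge \mu(A) - \eta \ge \alpha - \eta$.

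Next, since the $a_i$ are positive reals summing to $1$, the quantity $\sum_i a_i \nu_i(A)$ is a convex combination of the values $\nu_i(A)$. Thus at least one of these values must be at least as large as the average, i.e.\ there exists $i$ with $\nu_i(A) \ge \alpha - \eta$, which is the claim.

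There is really no obstacle here; the whole content of the lemma is the trivial fact that if a weighted average of numbers is at least $\alpha - \eta$, then at least one of the numbers is at least $\alpha - \eta$. The only thing worth being slightly careful about is that we are using the form $d(\mu,\nu) = \max_{A \subset X} |\mu(A) - \nu(A)|$ of total variation distance, which directly controls the discrepancy on the set $A$ of interest. No separate estimate or combinatorial argument is needed.
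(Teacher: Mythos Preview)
Your proof is correct and is essentially identical to the paper's own argument: use the definition of total variation distance on the set $A$ to get $\sum_i a_i \nu_i(A) \ge \alpha - \eta$, then average.
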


\begin{proof}
From our assumptions it follows that $\sum_{i=1}^ma_i\nu_i(A)\geq\alpha-\eta$,
so by averaging it follows that there exists $i$ such that $\nu_i(A)\geq\alpha-\eta$.
\end{proof}

\subsection{From uniform measure to equal-slices measure}

Before we apply Lemma \ref{measuretransfer}, let us prove a simple but 
useful technical lemma.

\begin{lemma}\label{balanced}
Let $x$ be an element of $[k]^n$ chosen uniformly at random, and for each
$j\in[k]$ let $X_j=\{i:x_i=j\}$. Then with probability at least $1-2k\exp(-2n^{1/3})$ 
the sets $X_j$ all have size between $n/k-n^{2/3}$ and $n/k+n^{2/3}$.
\end{lemma}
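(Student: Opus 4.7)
The plan is to apply a standard concentration inequality (Hoeffding's inequality or a Chernoff bound) to each coordinate class separately and then take a union bound over the $k$ classes.

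First, I would fix $j \in [k]$ and write $|X_j| = \sum_{i=1}^n \mathbf{1}_{[x_i = j]}$. Since $x$ is uniform in $[k]^n$, the indicator variables $\mathbf{1}_{[x_i = j]}$ are independent Bernoulli random variables with parameter $1/k$, so $\mathbb{E}|X_j| = n/k$. Hoeffding's inequality for sums of independent $[0,1]$-valued random variables then gives
\begin{equation*}
\Pr\bigl(\,\bigl||X_j| - n/k\bigr| \geq t\,\bigr) \leq 2\exp(-2t^2/n)
\end{equation*}
for every $t > 0$. Setting $t = n^{2/3}$ yields $\Pr(||X_j| - n/k| \geq n^{2/3}) \leq 2\exp(-2n^{1/3})$.

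Next, I would apply the union bound over all $j \in [k]$ to conclude that the probability that some $X_j$ fails to lie in the interval $[n/k - n^{2/3}, n/k + n^{2/3}]$ is at most $2k \exp(-2n^{1/3})$. Taking complements gives the stated bound.

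There is no real obstacle here: the only substantive ingredient is Hoeffding's inequality (or, equivalently, a Chernoff bound with the right constants), which is a standard off-the-shelf tool. The scaling $t = n^{2/3}$ is chosen precisely so that $t^2/n = n^{1/3}$, matching the exponent in the claimed bound, and the factor of $2k$ comes from the union bound (with an extra factor of $2$ because Hoeffding gives a two-sided tail). No independence or structural subtlety is required beyond the fact that, under the uniform measure on $[k]^n$, the coordinates $x_1, \dots, x_n$ are mutually independent.
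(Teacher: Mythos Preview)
Your proof is correct and essentially identical to the paper's: the paper notes that $|X_j|$ is binomial with parameters $n$ and $1/k$, invokes the standard tail bound $2\exp(-2r^2/n)$ (attributing it to Azuma's inequality rather than Hoeffding's, which amounts to the same thing here), and then takes the union bound over $j\in[k]$.
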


\begin{proof}
The size of $X_j$ is binomial with parameters $n$ and $1/k$. Standard
bounds for the tail of the binomial distribution therefore tell us that the
probability that $|X_i|$ differs from $n/k$ by at least $r$ is at most
$2\exp(-2r^2/n)$. (This particular bound follows from Azuma's inequality.)
The result follows.
\end{proof}

As a first application of Lemma \ref{measuretransfer} we shall prove
that a set of uniform density $\delta$ has equal-slices density almost
as great on some combinatorial subspace. The actual result we shall
prove is, however, slightly more general. To set it up, we shall need a
little notation. 

Let $m<n$, let $\sigma$ be an injection from $[m]$ to $[n]$, let $J=\sigma([m])$ 
and let $\overline{J}$ be the complement of $J$. Then we can write each 
element of $[k]^n$ as a pair $(x,y)$ with $x\in[k]^J$ and $y\in [k]^{\overline{J}}$. 
An element of $[k]^J$ is a function from $J$ to $[k]$. Given an element
$x=(x_1,\dots,x_m)$ of $[k]^m$, let $\phi_\sigma(x)$ be the element 
of $[k]^J$ that takes $j\in J$ to $x_{\sigma^{-1}(j)}$. In other words,
$\phi_\sigma$ takes an element of $[k]^m$ and uses $\sigma$ to turn
it into an element of $[k]^J$ in the obvious way. Given $y\in[k]^{\overline{J}}$,
we also define a map $\phi_{\sigma,y}:[k]^m\rightarrow[k]^n$ by taking
$\phi_{\sigma,y}(x)$ to be $(\phi_\sigma(x),y)$. Thus, $\phi_{\sigma,y}$
is a bijection between $[k]^m$ and the combinatorial subspace 
$S_{J,y}=\{(x,y):x\in [k]^J\}$ (in which the wildcard sets are all
singletons $\{i\}$ such that $i\in J$). 

Now let $\nu$ be a probability measure on $[k]^m$. For each pair
$(\sigma,y)$ as above, we can define a probability measure $\nu_{\sigma,y}$
on $[k]^n$ by ``copying" $\nu$ in the obvious way. That is, given a subset 
$A\subset[k]^n$ we let $\nu_{\sigma,y}(A)=\nu(\phi_{\sigma,y}^{-1}(A))$. 

We now show that if $m$ is sufficiently small, then the average of all
the measures $\nu_{\sigma,y}$ is close to the uniform measure on $[k]^n$.

\begin{lemma} \label{averagemeasure}
Let $\eta>0$, let $k\geq 2$ be a positive integer, let $n\geq(16k/\eta)^{12}$,
let $m\leq n^{1/4}$, let $\nu$ be a probability measure
on $[k]^m$ and let $\mu$ be the uniform measure on $[k]^n$. Then 
$d(\mu,\mathbb{E}_{\sigma,y}\nu_{\sigma,y})\leq\eta$, where the
average is over all pairs $(\sigma,y)$ as defined above.
\end{lemma}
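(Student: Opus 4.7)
The plan is to split $[k]^n$ into a ``balanced'' part $B$ and its complement via Lemma~\ref{balanced}, show that on $B$ the averaged measure $\mu' := \mathbb{E}_{\sigma,y}\nu_{\sigma,y}$ agrees with the uniform measure $\mu$ pointwise up to a multiplicative factor $1 + O(kn^{-1/12})$, and show that both $\mu$ and $\mu'$ assign exponentially small mass to $B^c$. Assembling these via $2d(\mu,\mu') = \sum_z|\mu(\{z\}) - \mu'(\{z\})| \leq \sum_{z \in B}|\mu(\{z\}) - \mu'(\{z\})| + \mu(B^c) + \mu'(B^c)$ should give something bounded by $\eta$ once $n \geq (16k/\eta)^{12}$.

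For the pointwise calculation, fix $z \in [k]^n$. The preimage $\phi_{\sigma,y}^{-1}(\{z\})$ is empty unless $y = z|_{\overline{J}}$, an event of probability $k^{-(n-m)}$, in which case it consists of the single point $x^z_\sigma \in [k]^m$ given by $x^z_\sigma(i) = z_{\sigma(i)}$. Therefore
\[
\mu'(\{z\}) \;=\; k^{-(n-m)} \sum_{x \in [k]^m} \nu(\{x\})\, \Pr\nolimits_\sigma(x^z_\sigma = x),
\]
and for uniform injection $\sigma$, writing $s_j := |\{i : z_i = j\}|$ and $n_j(x) := |\{i : x_i = j\}|$, a direct count gives $\Pr_\sigma(x^z_\sigma = x) = \prod_j s_j(s_j-1)\cdots(s_j - n_j(x) + 1) \big/ n(n-1)\cdots(n-m+1)$. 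For $z \in B$ we have $|s_j - n/k| \leq n^{2/3}$, and since $m \leq n^{1/4}$ each of the $m$ numerator factors equals $(n/k)(1 + O(kn^{-1/3}))$ while the denominator is $n^m(1 + O(n^{-1/2}))$. Multiplying out and summing against $\nu$ (whose total mass is $1$) yields $\mu'(\{z\}) = k^{-n}(1 + O(mkn^{-1/3})) = k^{-n}(1 + O(kn^{-1/12}))$ for every $z \in B$.

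For the tail, Lemma~\ref{balanced} gives $\mu(B^c) \leq 2k\exp(-2n^{1/3})$. To bound $\mu'(B^c)$, observe that a $\mu'$-random point $\phi_{\sigma,y}(x)$ has $j$-count $n_j(x) + |\{i \in \overline{J} : y_i = j\}|$; the first term is at most $m \leq n^{1/4}$, and the second is binomial with parameters $n-m$ and $1/k$, so the same Azuma bound that proves Lemma~\ref{balanced} shows this count lies in $[n/k - n^{2/3}, n/k + n^{2/3}]$ for every $j$ with failure probability $2k\exp(-\Omega(n^{1/3}))$, uniformly in $x$. The only substantive step is the pointwise ratio computation on $B$: one must track the constants carefully enough to verify that the $O(kn^{-1/12})$ main term really does fall under $\eta$ given the hypothesis $n \geq (16k/\eta)^{12}$. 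Everything else is soft.
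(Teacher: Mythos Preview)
Your proposal is correct and follows essentially the same approach as the paper: split into balanced and unbalanced points via Lemma~\ref{balanced}, compute $\mu'(\{z\})$ pointwise on the balanced part to within a multiplicative $1+O(kn^{-1/12})$ of $k^{-n}$, and control the unbalanced remainder. The only cosmetic differences are that the paper first reduces to the case where $\nu$ is a Dirac mass (your direct computation with the formula $\Pr_\sigma(x^z_\sigma=x)=\prod_j (s_j)_{n_j(x)}/(n)_m$ and summation against $\nu$ is equivalent and arguably cleaner), and that the paper bounds $\mu'(B^c)$ by summing the already-established pointwise lower bound $\mu'(\{z\})\geq k^{-n}(1-\eta/4)$ over $z\in B$ to get $\mu'(B)\geq\mu(B)(1-\eta/4)$, rather than running a second Azuma argument as you do; their route saves a small amount of work, but both are valid.
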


\begin{proof}
We shall prove the result in the case where all of $\nu$ is concentrated
at a single point. Since all other probability measures are convex 
combinations of these ``delta measures" (and their copies are the same
convex combinations of the copies of the delta measures), the result will 
follow.

Let $u$, then, be an element of $[k]^m$ and for each $C\subset[k]^m$ let 
$\nu(C)=1$ if $u\in C$ and $0$ otherwise. For each injection $\sigma:[m]\rightarrow[n]$
and each $y\in[k]^{\overline{J}}$ (where $\overline{J}$ is again the complement
of $\sigma([m])$), the measure $\nu_{\sigma,y}$ is the delta measure at 
$\phi_{\sigma,y}(u)$. That is, $\nu_{\sigma,y}(A)=1$ if $\phi_{\sigma,y}(u)\in A$ 
and $\nu_{\sigma,y}(A)=0$ otherwise. 

What, then, is $\mathbb{E}_{\sigma,y}\nu_{\sigma,y}(A)$? To answer this, let us 
see what happens when $A$ is a singleton $\{z\}$. Then $\nu_{\sigma,y}(A)=1$
if and only if the restriction of $z$ to $J$ is $\phi_\sigma(u)$ and the restriction
of $z$ to $\overline{J}$ is $y$. So $\mathbb{E}_{\sigma,y}\nu_{\sigma,y}(A)$ is the probability,
for a randomly chosen pair $(\sigma,y)$, that $z_{\sigma(i)}=u_i$ for every 
$i\in[m]$ and the restriction of $z$ to $\overline{J}$ is $y$. 

For every $\sigma$, the probability of the second event given $\sigma$ is 
$k^{m-n}$, so it remains to calculate the probability that $z_{\sigma(i)}=u_i$
for every $i$. For each $j\in[k]$, let $X_j=\{i:z_i=j\}$ and let $n_j$ be the 
cardinality of $X_j$. Now let us choose the values $\sigma(1),\sigma(2),\dots,
\sigma(m)$ one at a time and estimate the conditional probability that
$\sigma(i)\in X_{u_i}$ given that $\sigma(h)\in X_{u_h}$ for every $h<i$.
If we set $p=\min_jn_j$ and $q=\max_jn_j$, then each conditional 
probability of this kind will be at most $q/(n-m)$ and at least $(p-m)/(n-m)$.

Lemma \ref{balanced} tells us that with probability at least $1-2k\exp(-2n^{1/3})$
we have the bounds $n/k-n^{2/3}\leq p$ and $q\leq n/k+n^{2/3}$. If those
bounds hold, then the probability that $\sigma(i)\in X_{u_i}$ for every
$i\in[m]$ lies between $(1/k-2n^{-1/3})^m$ and $(1/k+2n^{-1/3})^m$.
(Here we are using the inequality that $(n/k+n^{2/3})/(n-n^{1/4})\leq 1/k+2n^{-1/3}$,
which holds if $k\geq 2$ and $n\geq 8$.) Therefore, it lies between
$k^{-m}(1-\eta/4)$ and $k^{-m}(1+\eta/4)$. (This inequality
is valid if $n\geq (16k/\eta)^{12}$, as we are assuming.)

We have just shown that the value of the measure $\mathbb{E}_{\sigma,y}\nu_{\sigma,y}$
on a singleton $\{z\}$ is approximately equal to the value taken by the uniform
measure, provided that the singleton has roughly the same number of coordinates
of each value. 

Let $B$ be the set of all ``balanced" sequences $z$. That is, $B$ is the set
of $z$ such that the assumptions of the above argument are satisfied. Then 
$\mathbb{E}_{\sigma,y}\nu_{\sigma,y}(B)\geq(1-2k\exp(-2n^{1/3})(1-\eta/4)\geq 1-\eta/2$,
from which it follows that $\mathbb{E}_{\sigma,y}\nu_{\sigma,y}(B^c)\leq\eta/2$.
Therefore, if $A$ is any subset of $[k]^n$, we have that
\begin{equation*}
\mathbb{E}_{\sigma,y}\nu_{\sigma,y}(A)\leq\mu(A)(1+\eta/4)+\eta/2
\end{equation*}
and 
\begin{equation*}
\mathbb{E}_{\sigma,y}\nu_{\sigma,y}(A)\geq\mu(A)(1-\eta/4)-\eta/2.
\end{equation*}
Since $\mu(A)\leq 1$, it follows that $|\mu(A)-\mathbb{E}_{\sigma,y}\nu_{\sigma,y}(A)|\leq\eta$.

As commented at the beginning of the proof, the result for arbitrary $\nu$ follows
from this result, since we can write it as a convex combination of delta measures
and apply the triangle inequality.
\end{proof}

Armed with this result, we now prove two statements that will be helpful to us
later on. 

\begin{corollary} \label{uniformtoequalslices}
Let $A$ be a subset of $[k]^n$ of uniform density $\delta$, let $\eta>0$, let $m\leq n^{1/4}$ 
and suppose that $n\geq(16k/\eta)^{12}$. Let $J$ be a random subset of $[n]$ of size
$m$ and let $y$ be a random element of $[k]^{\overline{J}}$. Then the expected equal-slices
density of $A$ inside the combinatorial subspace $S_{J,y}$ is at least $\delta-\eta$. In particular,
there exist $J$ and $y$ such that the equal-slices density of $A$ inside $S_{J,y}$ 
is at least $\delta-\eta$. 
\end{corollary}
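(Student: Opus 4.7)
The plan is to invoke Lemma \ref{averagemeasure} directly, specialised to the case where $\nu$ is the equal-slices measure on $[k]^m$. The hypotheses of that lemma match the hypotheses of the corollary ($n \geq (16k/\eta)^{12}$ and $m \leq n^{1/4}$), so we get immediately that the averaged measure $\mathbb{E}_{\sigma,y}\nu_{\sigma,y}$ has total variation distance at most $\eta$ from the uniform measure $\mu$ on $[k]^n$, where the average runs over injections $\sigma:[m]\to[n]$ and over $y\in[k]^{\overline{J}}$ with $J=\sigma([m])$.

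The key observation is then just to unpack what $\nu_{\sigma,y}(A)$ means. By definition $\nu_{\sigma,y}(A)=\nu(\phi_{\sigma,y}^{-1}(A))$, and since $\phi_{\sigma,y}$ is a bijection between $[k]^m$ and the combinatorial subspace $S_{J,y}\subset[k]^n$, this quantity is precisely the equal-slices density of $A\cap S_{J,y}$ inside $S_{J,y}$. Applying Lemma \ref{measuretransfer} (or, more directly, the definition of total variation distance) to the bound $\mu(A)\geq\delta$ gives
\begin{equation*}
\mathbb{E}_{\sigma,y}\nu_{\sigma,y}(A)\geq \mu(A)-\eta\geq \delta-\eta,
\end{equation*}
which is exactly the assertion that the expected equal-slices density of $A$ inside $S_{J,y}$ (when we sample $(\sigma,y)$) is at least $\delta-\eta$. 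The ``in particular'' clause is then an immediate consequence of the fact that an expectation of size at least $\delta-\eta$ is attained at some point of the sample space.

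The only point needing a brief comment is the reparametrisation from $\sigma$ to $J$. Since the equal-slices measure on $[k]^m$ is invariant under permutations of the coordinates, the equal-slices density of $A$ inside $S_{J,y}$ depends only on the set $J$ and on $y$, not on the particular bijection $\sigma:[m]\to J$ chosen to identify them. Hence averaging uniformly over injections $\sigma$ is equivalent to averaging uniformly over size-$m$ subsets $J\subset[n]$, which is the formulation used in the statement of the corollary. There is no real obstacle here; the corollary is essentially a rewording of the preceding lemma in the special case of equal-slices $\nu$, and the hardest part is simply keeping the identifications $[k]^m \leftrightarrow [k]^J \leftrightarrow S_{J,y}$ straight.
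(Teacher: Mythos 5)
Your proof is correct and takes essentially the same approach as the paper's: specialise Lemma \ref{averagemeasure} to $\nu$ being equal-slices measure on $[k]^m$, use total variation distance to bound $\mathbb{E}_{\sigma,y}\nu_{\sigma,y}(A)$ from below, and then pick out a good pair $(\sigma,y)$. The paper is actually slightly terser and silently elides the $\sigma$-versus-$J$ reparametrisation that you spell out via permutation-invariance of equal-slices measure; that extra remark is correct and harmless.
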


\begin{proof}
Let $\nu$ be the equal-slices measure on $[k]^m$ and apply Lemma \ref{averagemeasure}.
It implies that $\mathbb{E}_{\sigma,y}\nu_{\sigma,y}(A)\geq\delta-\eta$, from which
it follows that there exists a pair $(\sigma,y)$ such that $\nu_{\sigma,y}(A)\geq\delta-\eta$.
But $\nu_{\sigma,y}$ is the equal-slices measure on the combinatorial subspace
$S_{J,y}$, where $J=\sigma([m])$, which is $m$-dimensional.
\end{proof}

For the next lemma we need some notation. Given a subset $J\subset [n]$ of size
$m$ and a sequence $y\in[k]^{\overline{J}}$, let us write $S'_{J,y}$ for the 
set of all sequences in $S_{J,y}$ that never take the value $k$ in $J$. Thus, $S'_{J,y}$
is a copy of $[k-1]^m$. By the equal-slices density on $S'_{J,y}$ we mean the
image of the equal-slices density on $[k-1]^m$ (where this is considered as a
set in itself and not as a subset of $[k]^m$). 

\begin{corollary} \label{uniformtosmallerequalslices}
Let $A$ be a subset of $[k]^n$ of uniform density $\delta$, let $\eta>0$, let $m\leq n^{1/4}$ 
and suppose that $n\geq(16k/\eta)^{12}$. Let $J$ be a random subset of $[n]$ of size
$m$ and let $y$ be a random element of $[k]^{\overline{J}}$. Then the expected equal-slices
density of $A$ inside the set $S'_{J,y}$ is at least $\delta-\eta$. In particular,
there exist $J$ and $y$ such that the equal-slices density of $A$ inside $S'_{J,y}$ 
is at least $\delta-\eta$. 
\end{corollary}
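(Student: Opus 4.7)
The plan is to mirror the proof of Corollary~\ref{uniformtoequalslices} almost verbatim, the only change being the choice of the probability measure $\nu$ on $[k]^m$ to which Lemma~\ref{averagemeasure} is applied. Specifically, I would take $\nu$ to be the equal-slices measure on $[k-1]^m$, regarded as a probability measure on $[k]^m$ that is supported on the subset $\{x\in[k]^m:x_i\ne k\text{ for all }i\in[m]\}$ and assigns zero mass to every sequence of $[k]^m$ that takes the value $k$ anywhere. This is a perfectly valid probability measure on $[k]^m$, so Lemma~\ref{averagemeasure} applies (using the hypothesis $n\geq(16k/\eta)^{12}$ and $m\leq n^{1/4}$) and yields $d(\mu,\mathbb{E}_{\sigma,y}\nu_{\sigma,y})\leq\eta$, where $\mu$ is the uniform measure on $[k]^n$ and the average is over injections $\sigma\colon[m]\to[n]$ and $y\in[k]^{\overline{J}}$ with $J=\sigma([m])$.

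Next, since $A$ has uniform density $\delta$, Lemma~\ref{measuretransfer} immediately gives $\mathbb{E}_{\sigma,y}\nu_{\sigma,y}(A)\geq\delta-\eta$, and hence in particular there is at least one pair $(\sigma,y)$ with $\nu_{\sigma,y}(A)\geq\delta-\eta$. It remains only to identify the measure $\nu_{\sigma,y}$ with the equal-slices measure on $S'_{J,y}$ defined in the paragraph preceding the corollary. By construction, $\nu_{\sigma,y}=\nu\circ\phi_{\sigma,y}^{-1}$ is the pushforward under the bijection $\phi_{\sigma,y}\colon[k]^m\to S_{J,y}$ of a measure supported on the copy of $[k-1]^m$ inside $[k]^m$, and $\phi_{\sigma,y}$ sends that copy bijectively onto $S'_{J,y}$ while transporting the equal-slices measure on $[k-1]^m$ to the equal-slices measure on $S'_{J,y}$ (in the sense of the paragraph preceding the corollary, which treats $S'_{J,y}$ as a copy of $[k-1]^m$ in its own right). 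Thus $\nu_{\sigma,y}(A)$ is exactly the equal-slices density of $A$ inside $S'_{J,y}$, giving both the ``expected density'' statement and the ``there exist $J$ and $y$'' statement.

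There is really no obstacle here; the statement is nothing more than Corollary~\ref{uniformtoequalslices} with the measure on $[k]^m$ swapped out for one supported on a combinatorial subcube. The only point that needs a line of care is the bookkeeping verifying that pushing the equal-slices measure on $[k-1]^m$ forward through $\phi_{\sigma,y}$ coincides with the intended equal-slices measure on $S'_{J,y}$, which is immediate from how $\phi_{\sigma,y}$ acts coordinatewise. Observe also that Lemma~\ref{averagemeasure} is genuinely independent of which specific measure $\nu$ on $[k]^m$ one chooses (its proof reduces to delta measures and then averages), so nothing about its hypotheses needs to be reverified.
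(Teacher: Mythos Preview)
Your proposal is correct and matches the paper's own proof essentially line for line: the paper too defines the measure $\nu'$ on $[k]^m$ to be the equal-slices measure on $[k-1]^m$ (supported on the sequences avoiding the value~$k$), applies Lemma~\ref{averagemeasure}, and then reads off that $\nu'_{\sigma,y}$ is the equal-slices measure on $S'_{J,y}$. One small quibble: the expectation bound $\mathbb{E}_{\sigma,y}\nu_{\sigma,y}(A)\geq\delta-\eta$ follows directly from the definition of total variation distance (this is the first line of the \emph{proof} of Lemma~\ref{measuretransfer}, not its stated conclusion), so citing Lemma~\ref{measuretransfer} there is slightly off, but the mathematics is fine.
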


\begin{proof}
Let $\nu'$ be the measure on $[k]^m$ defined by taking $\nu'(A)$ to be the 
equal-slices measure of $A\cap[k-1]^m$ (considered as a subset of $[k-1]^m$).
In other words, $\nu'(A)$ is the probability that $x\in A$ if you choose a random
$(k-1)$-tuple $(r_1,\dots,r_{k-1})$ of positive integers that add up to $m$ and
then let $x$ be a random element of $[k-1]^m$ with $r_j$ $j$s for each $j$.

Applying Lemma \ref{averagemeasure}, we find that 
$\mathbb{E}_{\sigma,y}\nu'_{\sigma,y}(A)\geq\delta-\eta$, from which
it follows that there exists a pair $(\sigma,y)$ such that $\nu'_{\sigma,y}(A)\geq\delta-\eta$.
But $\nu'_{\sigma,y}$ is the equal-slices measure on the set $S'_{J,y}$, where $J=\sigma([m])$.
\end{proof}

\subsection{From equal-slices measure to uniform measure}

We would now like to go in the other direction, passing from a set of 
equal-slices density $\delta$ to a subspace inside which the uniform density 
is at least $\delta-\eta$ for some small $\eta$. As before, we need to use
a result that says that a typical sequence $x$ is not too imbalanced. Since 
we are choosing $x$ from the equal-slices measure, the conclusion we can
hope for is much weaker than the conclusion of Lemma \ref{balanced}:
the result we use is Lemma \ref{nottooimbalanced}, which tells us
that with high probability every value will be taken a reasonable number
of times.

The result we prove in this subsection states that if $A$ has equal-slices
density $\delta$, then there is
a distribution on the $m$-dimensional subspaces of $[k]^n$ such
that if you choose one at random then the expected uniform density of
$A$ in that subspace is at least $\delta-\beta$.

\begin{lemma} \label{equalslicestouniform}
Let $\delta,\beta>0$, let $m,n$ and $k$ be positive integers, and 
suppose that $m\leq\min\{\beta n/8k,\beta n/2k^2\}$.
Let $A$ be a subset of $[k]^n$ of equal-slices density $\delta$. Let
$J$ be a random subset of $[n]$ of size $[m]$, let $x$ be chosen 
uniformly at random from $[k]^J$ and let $y$ be chosen randomly, 
according to equal-slices measure, from $[k]^{\overline{J}}$
(with this choice made independently of $x$). Then the probability 
that $(x,y)\in A$ is between $\delta-\beta$ and $\delta+\beta$.
\end{lemma}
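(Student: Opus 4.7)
Let $\mu$ denote the distribution on $[k]^n$ given by the sampling in the lemma---uniform $J\subset[n]$ of size $m$, uniform $x\in[k]^J$, equal-slices $y\in[k]^{\overline J}$---and let $\nu$ denote the equal-slices measure on $[k]^n$. The conclusion $\mathbb{P}((x,y)\in A)\in[\delta-\beta,\delta+\beta]$ for every $A$ with $\nu(A)=\delta$ is equivalent to $d(\mu,\nu)\le\beta$, so I would reduce to proving this total-variation bound. The plan is to exploit the fact that both $\mu$ and $\nu$ are invariant under permutations of the coordinate set $[n]$, reducing the problem to a short calculation on the count vector $a=(a_1,\dots,a_k)$, where $a_j=|\{i:z_i=j\}|$.

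Each ingredient in the construction of $\mu$---uniform $J$, uniform $x$, equal-slices $y$---is manifestly permutation-symmetric, and $\nu$ is symmetric by definition. Hence $\mu(z)$ and $\nu(z)$ depend only on $a(z)$, and conditional on $a$ both measures are uniform over the $\binom{n}{a_1,\dots,a_k}$ strings with those counts. It follows that $d(\mu,\nu)$ equals the total-variation distance between the laws of $a$ under the two measures. Under $\nu$, $a$ is uniform over the $\binom{n+k-1}{k-1}$ compositions of $n$. Under $\mu$, $a_j=x_j+b_j$ where $(x_j)$ is multinomial$(m;1/k,\dots,1/k)$ and $(b_j)$ is an independent uniform composition of $n-m$.

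Call $a$ \emph{balanced} if $a_j\ge m$ for every $j$. For balanced $a$, every outcome $(x_j)$ with $\sum_j x_j=m$ automatically satisfies $x_j\le a_j$, and the multinomial identity $\sum\binom{m}{x_1,\dots,x_k}=k^m$ gives
\[
\mathbb{P}_\mu(a)=\binom{n-m+k-1}{k-1}^{-1},
\]
independent of $a$. Thus on balanced compositions the ratio $R=\mathbb{P}_\mu(a)/\mathbb{P}_\nu(a)=\prod_{i=1}^{k-1}(n+i)/(n-m+i)$ satisfies $R-1\le 4km/n$, which by $m\le\beta n/(8k)$ is $\le\beta/2$. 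The balanced contribution to $d(\mu,\nu)$ is therefore $\tfrac12(R-1)\mathbb{P}_\nu(\text{bal})\le\beta/4$.

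For the unbalanced contribution, Corollary~\ref{nottooimbalanced} applied in $[k]^n$ bounds $\mathbb{P}_\nu(\text{unbal})\le mk^2/n$; since $a_j<m$ forces $b_j<m$, the same corollary applied in $[k]^{n-m}$ gives $\mathbb{P}_\mu(\text{unbal})\le mk^2/(n-m)\le 2mk^2/n$. Both are absorbed by the hypothesis $m\le\beta n/(2k^2)$, making the unbalanced contribution $\tfrac12(\mathbb{P}_\mu(\text{unbal})+\mathbb{P}_\nu(\text{unbal}))\le 3mk^2/(2n)\le3\beta/4$. Summing yields $d(\mu,\nu)\le\beta$. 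The one slightly delicate point---and the main obstacle---is the symmetry-reduction step that lets us ignore the sequence structure entirely and compare only the count distributions; once that is clean, the remaining estimates are routine combinatorics plus a direct invocation of Corollary~\ref{nottooimbalanced}.
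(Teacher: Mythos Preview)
Your argument is correct and lands on exactly the same quantitative comparison as the paper: on ``balanced'' points (those with at least $m$ coordinates of each value) the two measures differ by the constant ratio $r_{n,k,m}=\prod_{i=1}^{k-1}(n+i)/(n-m+i)$, and the unbalanced part is controlled by Corollary~\ref{nottooimbalanced}. The packaging differs slightly: the paper works with individual singletons $\{z\}$ and establishes the ratio $r_{n,k,m}$ by an induction on $m$ (peeling off one coordinate at a time), whereas you first reduce by permutation-invariance to the count vector and then read off $\mathbb{P}_\mu(a)=\binom{n-m+k-1}{k-1}^{-1}$ in one line from the multinomial identity. Your route is a touch cleaner, but the underlying decomposition and the estimates are the same. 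One small point you leave implicit: the inequalities $R-1\le 4km/n$ and $mk^2/(n-m)\le 2mk^2/n$ use $m\le n/2$, which is not stated directly in the hypotheses; it follows once you note that the conclusion is trivial for $\beta\ge 1$, so one may assume $\beta\le 1$ and hence $m\le n/(8k)$.
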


\begin{proof}
Let $z$ be an element of $[k]^n$. We shall estimate the probability
that $(x,y)=z$, when $x$ and $y$ are chosen as in the statement 
of the theorem, and compare that with the equal-slices probability
of the singleton $\{z\}$. To do this, let us define $u_j$, for each 
$j\in [k]$, to be the number of $i$ such that $x_i=j$. Let us also
assume that $u_j\geq 1$ for every $j$.

We start by considering the case $m=1$. In other words, we first
pick a random $i$ and randomly choose some $j\in[k]$. Then we
randomly choose $y$ from equal-slices measure on $[k]^{[n]\setminus\{i\}}$.
And then we would like to know the probability that $j=z_i$ and
$y_h=z_h$ for every $h\ne i$. 

The probability that $j=z_i$ is $1/k$, since we chose $j$ uniformly.
Now let us suppose that $z_i$ is in fact equal to 1. Then the 
probability that $y_h=z_h$ for every $h\ne i$ is the equal-slices
measure of a singleton that consists of a sequence in $[k-1]^{n-1}$ 
with $u_1-1$ 1s and $u_j$ $j$s for every $j>1$. That measure is
equal to
\begin{equation*}
\binom{n+k-2}{k-1}^{-1}\binom{n-1}{u_1-1,u_2,\dots,u_k}^{-1}
\end{equation*}
(It is here that we are assuming that $u_1\ne 0$.) For comparison, 
the equal-slices measure of $\{z\}$ in $[k]^n$ is
\begin{equation*}
\binom{n+k-1}{k-1}^{-1}\binom{n}{u_1,u_2,\dots,u_k}^{-1}.
\end{equation*}
The first measure divided by the second equals $(n+k-1)/u_1$.

It follows that the probability that $(x,y)=z$ given that
$z_i=1$ is $(n+k-1)/ku_1$. Therefore, by the law of total
probability, the probability that $(x,y)=z$ is
\begin{equation*}
\sum_{j=1}^k\frac 1k\frac{u_j}n\frac{n+k-1}{u_j}=\frac{n+k-1}n
\end{equation*}
times the equal-slices probability of $z$.

Now let us consider the more general case where $|J|=m$.
Again we shall look at the probability that $(x,y)=z$, but this
time we shall assume that $u_j\geq m$ for every $j$. We claim
that the probability of getting $z$ is 
\begin{equation*}
r_{n,k,m}=\frac{(n+k-1)(n+k-2)\dots(n+k-m)}{n(n-1)\dots(n-m+1)}
\end{equation*}
times the equal-slices measure of $\{z\}$. This follows easily
from what we have just done and induction. Indeed, by induction
we know that if we choose a random set $J'$ of size $m-1$ and
choose $x$ uniformly from $[k]^{J'}$ and $y$ using equal-slices
from $[k]^{\overline{J'}}$, then the probability that $(x,y)=z$ is
$r_{n,k,m-1}$ times the equal-slices measure of $\{z\}$. If we
now change the way we choose $y$ by uniformly picking one
coordinate and using equal-slices to pick the rest, then by the
case $m=1$ we multiply this probability by a further $(n+k-m)/(n-m+1)$,
which gives us $r_{n,k,m}$ times the equal-slices measure of $\{z\}$.

Now $r_{n,k,m}$ is at least 1 and at most $(1+k/(n-m))^m$. Given 
our assumption about $m$, this is at most $1+\beta/2$. Thus, for
every $z$ with at least $m$ coordinates of each value, the 
probability that $(x,y)=z$ lies between $\nu(\{z\})$ and 
$(1+\beta/2)\nu(\{z\})$, where $\nu$ is equal-slices measure.
By Lemma \ref{nottooimbalanced}, the equal-slices probability 
that $z$ does not have at least $m$ coordinates of each value 
is at most $mk^2/n$, which by assumption is at most $\beta/2$.

Now let $A$ be any subset of $[k]^n$ of density $\delta$ and
let $B$ be the set of all sequences such that for some $j$ there
are fewer than $m$ coordinates equal to $j$. Then if we choose
$(x,y)$ randomly in the manner stated, the probability that it
belongs to $A$ is at most $(1+\beta/2)\nu(A)+\beta/2$, since
the probability that it belongs to $B$ is at most the equal-slices
measure of $B$ (as we see by looking at $B^c$). The probability
is also at least $\nu(A)-\beta/2$, for similar reasons.  This proves 
the lemma.
\end{proof}

We now show that DHJ implies the equal-slices version of DHJ
(which we stated earlier as Theorem \ref{equalslicesdhj}).

\begin{corollary} \label{dhjimpliesedhj}
Let $k$ be a positive integer and suppose that DHJ$_k$ is true. 
Let $\delta>0$ and let $n\geq (16k^2/\delta)$DHJ$(k,\delta/2)$. Then
every subset of $[k]^n$ of equal-slices density at least $\delta$
contains a combinatorial line.
\end{corollary}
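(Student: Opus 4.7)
The plan is to use Lemma \ref{equalslicestouniform} to transfer equal-slices density on $[k]^n$ to uniform density on a random combinatorial subspace, and then apply the assumed DHJ$_k$ inside that subspace. Set $m = \mathrm{DHJ}(k,\delta/2)$ and take $\beta = \delta/2$ in Lemma \ref{equalslicestouniform}. The hypothesis $n \geq (16k^2/\delta)\,\mathrm{DHJ}(k,\delta/2)$ gives $m \leq \delta n/(16k^2)$, which is at most both $\beta n/(8k) = \delta n/(16k)$ and $\beta n/(2k^2) = \delta n/(4k^2)$; so the parameter conditions of Lemma \ref{equalslicestouniform} are satisfied.

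Next, I would apply Lemma \ref{equalslicestouniform} to the given set $A$ of equal-slices density at least $\delta$. The lemma says that if one chooses a random subset $J \subset [n]$ of size $m$, a uniformly random $x \in [k]^J$, and an independent equal-slices $y \in [k]^{\overline{J}}$, then
\[
\Pr[(x,y) \in A] \geq \delta - \beta = \delta/2.
\]
Reading this probability as the expectation, over random $(J,y)$, of the uniform density of $A$ inside the $m$-dimensional combinatorial subspace $S_{J,y}$, averaging produces a particular pair $(J,y)$ for which the uniform density of $A \cap S_{J,y}$ inside $S_{J,y}$ is at least $\delta/2$.

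Finally, since $S_{J,y}$ is naturally isomorphic as a combinatorial subspace to $[k]^m$ with $m = \mathrm{DHJ}(k,\delta/2)$, applying DHJ$_k$ inside $S_{J,y}$ produces a combinatorial line contained in $A \cap S_{J,y}$. Such a line is automatically a combinatorial line of the ambient cube $[k]^n$, which is exactly what we want.

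There is no real obstacle here beyond checking the constants: the conceptual work has already been done in Lemma \ref{equalslicestouniform}, and the rest is bookkeeping together with a single averaging step and an appeal to the hypothesis DHJ$_k$. The only place one has to be a little careful is the translation from ``probability that $(x,y) \in A$'' to ``expected uniform density on $S_{J,y}$'', which is immediate because, once $J$ and $y$ are fixed, $x$ is uniform on $[k]^J$ and so $(x,y)$ is uniform on $S_{J,y}$.
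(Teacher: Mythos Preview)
Your proof is correct and follows exactly the same approach as the paper: apply Lemma~\ref{equalslicestouniform} with $\beta=\delta/2$ and $m=\mathrm{DHJ}(k,\delta/2)$, average to find a subspace $S_{J,y}$ of dimension $m$ on which $A$ has uniform density at least $\delta/2$, and then invoke DHJ$_k$ there. The paper's own proof is a two-sentence summary of precisely this argument; you have simply spelled out the parameter check and the averaging step that the paper leaves implicit.
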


\begin{proof}
By Lemma \ref{equalslicestouniform} there exists a combinatorial
subspace $V$ of dimension not less than DHJ$(k,\delta/2)$ such that 
the uniform density of $A$ in $V$ is at least $\delta/2$. The result
follows.
\end{proof}

\subsection{From uniform measure on $[k]^n$ to uniform measure on $[k-1]^m$}

We need one more result of a similar kind. This time it says that if we
choose a random set $J\subset[n]$ of size $m$ and choose $y$
uniformly at random from $[k]^{\overline{J}}$ and $x$ uniformly at
random from $[k-1]^J$, then the distribution of $(x,y)$ is approximately
uniform. This can be proved as another almost immediate corollary
of Lemma \ref{averagemeasure}. However, we shall give a direct 
proof instead, since this case is an easy one and the proof is short.

\begin{lemma}\label{uniformtouniform}
Let $\eta>0$ and let $m$ and $n$ be positive integers with $m\leq n^{1/4}$
and $n\geq (12/\eta)^{12}$. Let
$J$ be a random subset of $[n]$ of size $m$, let $y$ be a random
element of $[k]^{\overline{J}}$ and let $x$ be a random element of
$[k-1]^J$ (in both cases chosen uniformly). Then the total variation
distance between the resulting distribution on $(x,y)$ and the uniform 
distribution on $[k]^n$ is at most $\eta$.
\end{lemma}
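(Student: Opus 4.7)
The plan is to compute $p_z := \Pr[(x,y) = z]$ directly for each fixed $z \in [k]^n$ and compare it with the uniform probability $k^{-n}$. Writing $T_z := \{i : z_i = k\}$ and $t_z = |T_z|$, the event $(x,y) = z$ can occur only if $J \cap T_z = \emptyset$ (so that $z|_J \in [k-1]^J$ is a valid value for $x$); given this, conditioning on $J$ gives $\Pr[x = z|_J \mid J] = (k-1)^{-m}$ and $\Pr[y = z|_{\overline J} \mid J] = k^{-(n-m)}$. Averaging over $J$, one obtains
\[
p_z = \frac{\binom{n - t_z}{m}}{\binom{n}{m}} \cdot (k-1)^{-m} \cdot k^{-(n-m)}, \qquad \frac{p_z}{k^{-n}} = \left(\frac{k}{k-1}\right)^m \prod_{i=0}^{m-1} \frac{n - t_z - i}{n - i}.
\]

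The next step is to show that for "balanced" $z$, meaning $|t_z - n/k| \leq n^{2/3}$, this density ratio lies within $\eta/4$ of $1$. The key algebraic identity is
\[
\frac{k}{k-1} \cdot \frac{n - t_z - i}{n - i} = 1 - \frac{k(t_z - n/k) + i}{(k-1)(n - i)},
\]
which exhibits the cancellation between the factor $(k/(k-1))^m$ and the product: each of the $m$ factors is of the form $1 + \varepsilon_i$ with $|\varepsilon_i| \leq (k n^{2/3} + m)/((k-1)(n - m))$, and since $k/(k-1) \leq 2$ and $m \leq n^{1/4}$, this is $O(n^{-1/3})$. Taking logarithms (using $|\log(1+\varepsilon)| \leq 2|\varepsilon|$ for small $\varepsilon$), the total multiplicative error is at most $O(m \cdot n^{-1/3}) = O(n^{-1/12})$, which is $\leq \eta/4$ for $n \geq (12/\eta)^{12}$.

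Finally, I would handle the unbalanced $z$ using concentration. Lemma \ref{balanced} already gives $\mu(B^c) \leq 2k \exp(-2 n^{1/3})$, where $B$ denotes the set of balanced $z$ and $\mu$ is uniform measure; and since $t_{(x,y)}$ is just the number of $k$'s appearing in the independent uniform $y \in [k]^{\overline J}$, it is $\mathrm{Bin}(n-m, 1/k)$, so an essentially identical Azuma bound controls $\rho(B^c)$. Combining via the inequality
\[
|\rho(A) - \mu(A)| \;\leq\; \sum_{z \in A \cap B} |p_z - k^{-n}| + \rho(B^c) + \mu(B^c) \;\leq\; \eta/4 + o(1) \;\leq\; \eta,
\]
valid for every $A \subset [k]^n$, yields the claimed total variation bound.

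The main obstacle is the balanced-case calculation: we need the cancellation between $(k/(k-1))^m$ and the binomial ratio to be sharp enough that the final hypothesis contains no $k$-dependence, whereas a direct invocation of Lemma \ref{averagemeasure} would carry the $k$-factor through. The rest is standard bookkeeping once the probability formula for $p_z$ has been written down.
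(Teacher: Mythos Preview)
Your proof is correct and follows essentially the same route as the paper: compute $p_z$ explicitly, show it is $(1 \pm O(n^{-1/12}))k^{-n}$ whenever $|t_z - n/k| \le n^{2/3}$, and absorb the unbalanced $z$ via concentration. The only minor difference is that the paper bounds $\rho(B^c)$ indirectly---using the balanced-case lower bound $p_z \ge (1-\eta/3)k^{-n}$ to deduce $\rho(B) \ge (1-\eta/3)\mu(B) \ge (1-\eta/3)^2$---rather than via a second direct application of Azuma to $\mathrm{Bin}(n-m,1/k)$ as you do.
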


\begin{proof}
Let $z$ be an element of $[k]^n$, let $X$ be the set of coordinates $i$
such that $z_i=k$ and let $r$ be the cardinality of $X$. By the proof of
Lemma \ref{balanced}, the probability that $r$ lies between 
$n/k-n^{2/3}$ and $n/k+n^{2/3}$ is at least $1-2\exp(-2n^{1/3})$,
which is at least $1-\eta/3$. Let us assume 
that $z$ has this property. Now choose $J$ and let us calculate the 
probability that $(x,y)=z$ conditional on this choice of $J$. 

If $J\cap X\ne\emptyset$, then the probability is zero. If, however, 
if $J\cap X=\emptyset$ then it is $(k-1)^{-m}k^{-(m-n)}$. The probability
that $J\cap X=\emptyset$ is $\binom{n-r}m$, which lies between 
$(1-1/k-n^{-1/3}-m/n)^m$ and $(1-1/k+n^{-1/3})^m$. A simple calculation
shows that it therefore lies between $(1-1/k)^m(1-4n^{-1/12})$ and
$(1-1/k)^m(1+4n^{-1/12})$. Therefore, the probability that $(x,y)=z$
lies between $k^{-n}(1\pm \eta/3)$.

Let $B$ be the set of all $z$ such that $r$ does \textit{not} lie between
$n/k-n^{2/3}$ and $n/k+n^{2/3}$. Then the probability that $(x,y)\in B$
is at most $1-(1-\eta/3)^2\leq2\eta/3$. Therefore, if $A$ is any subset 
of $[k]^n$ and $\delta$ is the density of $A$, the probability that 
$(x,y)\in A$ lies between $(\delta-\eta/3)(1-\eta/3)$ and 
$\delta(1+\eta/3)+2\eta/3$, which proves the lemma.
\end{proof}

\section{A dense set with no combinatorial line correlates locally with an intersection of insensitive sets} 

In this section we shall carry out the first three stages of the proof of DHJ$_k$
(corresponding to the first three stages of the sketch proofs given earlier
of the corners theorem and DHJ$_3$).

\subsection{Finding a dense diagonal}

Let $A$ be a subset of $[k]^n$ of density $\delta$. The aim of this subsection is 
to find a combinatorial subspace $V$ of $[k]^n$ with two properties. First, the 
density of $A$ inside $V$ is not much smaller than $\delta$, and second, there
are many points of $A$ in $V$ for which the variable coordinates take values
in $[k-1]$. The densities in both cases are with respect to equal-slices measure.
The second statement corresponds to the title of this subsection: this step is
analogous to finding a dense diagonal in the corners proof. However, that
proof gave us a dense structured set that was disjoint from $A$. Here, what
we get is a structured set that is dense in a subspace. This will not help us
at all unless $A$ still has density almost $\delta$ (or better) in that subspace.
Thus, there is slightly more to this step than there was in the corners proof.

\begin{lemma}\label{densediagonal}
Let $A\subset[k]^n$ be a set of uniform density $\delta$, let $0<\eta\leq\delta/4$, 
let $m\leq n^{1/4}$ and suppose that $n\geq(16k/\eta)^{12}$. Then there 
exists a pair $(J,y)$, where $J$ is a subset of $[n]$ of size $m$ and 
$y\in[k]^{\overline{J}}$, such that one of the following two possibilities
holds:

(i) the equal-slices density of $A$ in the subspace $S_{J,y}$ is at least 
$\delta+\eta$;

(ii) the equal-slices density of $A$ in $S_{J,y}$ is at least $\delta-4\eta\delta^{-1}$
and the equal-slices density of $A$ in $S'_{J,y}$ is at least $\delta/4$.
\end{lemma}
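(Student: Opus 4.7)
The plan is to assume conclusion (i) fails for every pair $(J,y)$ and then locate a pair satisfying conclusion (ii) by a first-moment and union-bound argument. Let $\alpha(J,y)$ denote the equal-slices density of $A$ inside $S_{J,y}$, and let $\beta(J,y)$ denote the equal-slices density of $A$ inside $S'_{J,y}$, where $J$ is a uniformly random subset of $[n]$ of size $m$ and $y$ is uniform on $[k]^{\overline J}$. The hypotheses $n \geq (16k/\eta)^{12}$ and $m \leq n^{1/4}$ are exactly what is needed to invoke Corollary~\ref{uniformtoequalslices} and Corollary~\ref{uniformtosmallerequalslices}, and these supply for free the averaged lower bounds $\mathbb{E}\,\alpha(J,y) \geq \delta - \eta$ and $\mathbb{E}\,\beta(J,y) \geq \delta - \eta$.

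Assume now that (i) fails, so $\alpha(J,y) < \delta + \eta$ for every pair $(J,y)$. Let $p$ be the probability that $\alpha(J,y) < \delta - 4\eta/\delta$. Bounding $\alpha$ by $\delta - 4\eta/\delta$ on this event and by $\delta + \eta$ on its complement gives $\mathbb{E}\,\alpha \leq p(\delta - 4\eta/\delta) + (1-p)(\delta + \eta)$, and combining this with $\mathbb{E}\,\alpha \geq \delta - \eta$ rearranges to $p \leq 2\delta/(\delta + 4)$, which is strictly less than $\delta/2$ whenever $\delta > 0$. For $\beta$ the only upper bound available is $\beta \leq 1$, but the analogous averaging estimate with threshold $\delta/4$ gives $\Pr[\beta(J,y) \geq \delta/4] \geq (3\delta/4 - \eta)/(1 - \delta/4)$, and this is at least $\delta/2$ thanks to the hypothesis $\eta \leq \delta/4$.

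A union bound therefore shows that both $\alpha(J,y) \geq \delta - 4\eta/\delta$ and $\beta(J,y) \geq \delta/4$ hold simultaneously with probability at least $\delta/2 - 2\delta/(\delta + 4) > 0$, so at least one such pair $(J,y)$ exists, which is precisely conclusion (ii). The whole argument is essentially numerical once the two corollaries are in hand; the one thing that has to be checked is that the thresholds $\delta - 4\eta/\delta$ (for $\alpha$) and $\delta/4$ (for $\beta$), together with the upper-tail constraint $\alpha < \delta + \eta$ coming from the failure of (i), are balanced so that the first-moment bound on the $\alpha$-tail and the trivial averaging bound on the $\beta$-tail leave positive room in the intersection. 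The constants appearing in the statement of the lemma are exactly what make this balance work, and the hypothesis $\eta \leq \delta/4$ is used precisely to make the $\beta$-side of the union bound beat the $\alpha$-side.
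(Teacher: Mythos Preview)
Your proof is correct and follows essentially the same approach as the paper: both invoke Corollaries~\ref{uniformtoequalslices} and~\ref{uniformtosmallerequalslices} to get the two first-moment lower bounds $\mathbb{E}\,\alpha \geq \delta - \eta$ and $\mathbb{E}\,\beta \geq \delta - \eta$, then assume (i) fails and use Markov-type averaging to bound $\Pr[\alpha < \delta - 4\eta/\delta]$ below $\delta/2$ and $\Pr[\beta \geq \delta/4]$ above $\delta/2$, so the intersection has positive probability. Your bound $p \leq 2\delta/(\delta+4)$ is slightly sharper than the paper's bare $p < \delta/2$, but the arguments are otherwise identical.
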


\begin{proof}
By Corollary \ref{uniformtoequalslices}, if we choose $J$ and $y$ randomly
then the expected equal-slices density of $A$ in $S_{J,y}$ is at least 
$\delta-\eta$. If the density is never more than $\delta+\eta$, then the
probability that it is less than $\delta-4\eta\delta^{-1}$ is less than $\delta/2$,
since otherwise the average would be at most
\begin{equation*}
(1-\delta/2)(\delta+\eta)+(\delta/2)(\delta-4\eta\delta^{-1})=\delta+(1-\delta/2)\eta-2\eta<\delta-\eta,
\end{equation*}
a contradiction.

By Corollary \ref{uniformtosmallerequalslices} the average density of $A$ in 
a random set $S'_{J,y}$ is at least $\delta-\eta$. Therefore, the probability that 
$A$ has density less than $\delta/4$ in $S'_{J,y}$ is at most $1-\delta/2$, since 
otherwise the average would be at most
\begin{equation*}
\delta/2+(1-\delta/2)(\delta/4)<3\delta/4\leq\delta-\eta,
\end{equation*}
another contradiction.

It follows that if (i) does not hold then with positive probability (ii) holds.
\end{proof}

What Lemma \ref{densediagonal} tells us is that either we can pass to a 
subspace and get a density increment of $\eta$, in which case we can move
to the next stage of the iteration (after passing to a further subspace to convert
this density increment into a uniform density increment), or we find a ``dense
diagonal" in a subspace in which $A$ has not lost a significant amount of 
density.

\subsection{A ``simple" locally dense set that is almost disjoint from $A$}

Let us suppose that the second possible conclusion of Lemma \ref{densediagonal}
holds (for an $\eta$ that we are free to choose later). Then we have a combinatorial
subspace $V$ of $m$ dimensions and $A$ contains many points in $V$ for which
the variable coordinates are all in $[k-1]$. For simplicity, and without loss of 
generality, let us assume that $V=[k]^m$, and let us write $A$ for $A\cap V$.
So we are given that $A$ has equal-slices density at least $\delta-\gamma$
(where $\gamma=4\eta\delta^{-1}$) and inside $[k-1]^m$ has equal-slices
density at least $\delta/4$. Let us write $B$ for $A\cap [k-1]^m$. Finally, 
if $x\in[k]^m$ and $i,j\in[k]$, let us write $x^{i\rightarrow j}$ for the sequence
that turns all the $i$s of $x$ into $j$s.

\begin{lemma}\label{disjointsimpleset}
Let $A$ be a subset of $[k]^m$ that contains no combinatorial line, and let
$B=A\cap[k-1]^m$. For each $j\leq k-1$ let $C_j$ be the set $\{x\in[k]^m:x^{k\rightarrow j}\in B\}$.
Then $C_j$ is a $jk$-insensitive set, and $A\cap C_1\cap\dots\cap C_{k-1}\subset [k-1]^m$.
\end{lemma}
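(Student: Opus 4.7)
The proof is quite direct — in fact I expect no serious obstacle, because the lemma is essentially unpacking the definitions of combinatorial line, of $C_j$, and of ``$jk$-insensitive''. There are two independent claims to verify: insensitivity of each $C_j$, and the inclusion of the intersection in $[k-1]^m$.

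For the insensitivity of $C_j$, the plan is to fix $j\in[k-1]$, take any $x\in C_j$, and let $y$ be obtained from $x$ by changing some coordinates where $x$ equals $j$ into $k$, and/or changing some coordinates where $x$ equals $k$ into $j$. The key observation is simply that $x^{k\to j}$ and $y^{k\to j}$ agree coordinatewise: on every coordinate $i$ where $x_i\in\{j,k\}$ or $y_i\in\{j,k\}$ both sequences are set to $j$, and on every other coordinate $x_i=y_i$ is untouched by the two operations. Hence $y^{k\to j}=x^{k\to j}\in B$, so $y\in C_j$. This is the whole argument for the first claim.

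For the inclusion $A\cap C_1\cap\dots\cap C_{k-1}\subset[k-1]^m$, I would argue by contradiction. Suppose $x\in A\cap C_1\cap\dots\cap C_{k-1}$ but $x\notin[k-1]^m$, and set $U=\{i:x_i=k\}$, which is non-empty. I claim that the $k$ points $x^{k\to 1},x^{k\to 2},\dots,x^{k\to k-1},x$ (the last of these being $x=x^{k\to k}$) form a combinatorial line in $[k]^m$ with wildcard set $U$: the fixed coordinates are exactly the coordinates outside $U$, on which all $k$ points agree with $x$, and on $U$ the $j$-th point of the line takes the constant value $j$. Since $x\in C_j$ gives $x^{k\to j}\in B\subset A$ for $j=1,\dots,k-1$, and $x\in A$, all $k$ points of this line lie in $A$, contradicting the hypothesis that $A$ contains no combinatorial line. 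Therefore no such $x$ exists, establishing the inclusion.

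Since both parts reduce to routine unpacking of the definitions and the non-emptiness of $U$ in the contradiction step, I do not anticipate any real obstacle; the only thing to be careful about is making sure ``combinatorial line'' is read with the (non-degenerate) requirement that the wildcard set be non-empty, which is exactly where the assumption $x\notin[k-1]^m$ is used.
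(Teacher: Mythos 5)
Your proof is correct and follows essentially the same approach as the paper's: the insensitivity of $C_j$ comes from the observation that membership depends only on $x^{k\to j}$, and the inclusion follows by noting that $x^{k\to 1},\dots,x^{k\to k-1},x$ form a combinatorial line (with non-empty wildcard set $U=\{i:x_i=k\}$) whose first $k-1$ points lie in $B\subset A$, so $x$ cannot also lie in $A$. Your version merely makes explicit a few steps the paper leaves implicit, such as the identity $x^{k\to j}=y^{k\to j}$ and the explicit identification of the line.
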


\begin{proof}
Since the condition for belonging to $C_j$ depends only on $x^{k\rightarrow j}$,
it is trivial that $C_j$ is $jk$-insensitive. 

Suppose now that $x\in C_1\cap\dots\cap C_{k-1}$ and that at least one coordinate
of $x$ takes the value $k$. Let $X$ be the set of coordinates where $x=k$. Then 
if you change all the coordinates in $X$ to $j$, you end up with a point that belongs
to $A$, since $x\in C_j$. Therefore, since $A$ contains no combinatorial line, it
follows that $x$ itself does not belong to $A$. 
\end{proof}

\begin{lemma} \label{densityofC}
Let $A$, $B$ and $C_1,\dots,C_{k-1}$ be the subsets of $[k]^m$ defined in Lemma \ref{disjointsimpleset} and let $C=C_1\cap\dots\cap C_{k-1}$.
Then for every $\delta>0$ there exists $\theta>0$ such that if $B$ has equal-slices density 
at least $\delta/4$ in $[k-1]^m$, then $C\setminus[k-1]^m$ has equal-slices density at 
least $\theta$ in $[k]^m$. 
\end{lemma}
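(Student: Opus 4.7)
The plan is to identify $C \setminus [k-1]^m$ with the set of non-degenerate combinatorial lines of $[k-1]^m$ that lie entirely in $B$, and then apply the probabilistic equal-slices version of DHJ for alphabet $k-1$ (available by the induction hypothesis on $k$ that is implicit in the overall argument).

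First I would unpack the definition. For any $x \in [k]^m$, write $K = \{i : x_i = k\}$. If $K = \emptyset$ then $x^{k \to j} = x$ for every $j$, so $x \in C$ iff $x \in B$; in particular $C \cap [k-1]^m = B$. If $K \neq \emptyset$, the $k-1$ points $x^{k \to 1}, \ldots, x^{k \to k-1}$ are precisely the points of the combinatorial line $\lambda_x$ in $[k-1]^m$ whose wildcard set is $K$ and whose fixed coordinates agree with $x$ off $K$. So $x \in C \setminus [k-1]^m$ if and only if $\lambda_x$ is a non-degenerate combinatorial line of $[k-1]^m$ contained in $B$.

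Next I would invoke the identification, used throughout Section 3, between combinatorial lines of $[k-1]^m$ and points of $[k]^m$ (where coordinate value $k$ is interpreted as ``wildcard''). Under this coding, the equal-slices measure on lines of $[k-1]^m$ is by definition the equal-slices measure on $[k]^m$, and the correspondence $x \leftrightarrow \lambda_x$ above is literally the identity. Hence the equal-slices density of $C \setminus [k-1]^m$ in $[k]^m$ equals the equal-slices measure, among all (possibly degenerate) lines of $[k-1]^m$, of those lines that are non-degenerate and contained in $B$.

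Now I would apply Theorem \ref{probabilisticdhj} with $k$ replaced by $k-1$ and $\delta$ replaced by $\delta/4$; this is legitimate because DHJ$_{k-1}$, and hence its probabilistic equal-slices consequence, is available by the inductive setup. Provided $m$ is taken sufficiently large in terms of $\delta$ and $k$ (specifically at least $\max\{\mathrm{EDHJ}(k-1,\delta/16),\, 16(k-1)^2/\delta\}$), this yields a positive constant $\theta_0 = \mathrm{PDHJ}(k-1, \delta/4)$ as a lower bound on the equal-slices density of all lines of $[k-1]^m$ contained in $B$. The degenerate lines in $B$ correspond exactly to the elements of $B$ viewed inside $[k]^m$ via the inclusion $[k-1]^m \hookrightarrow [k]^m$; by Lemma \ref{degenerate} their total equal-slices measure in $[k]^m$ is at most $(k-1)/(m+k-1)$. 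Enlarging $m$ if necessary so that this quantity is at most $\theta_0/2$, we conclude that $C \setminus [k-1]^m$ has equal-slices density at least $\theta := \theta_0/2 > 0$.

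The only step that requires genuine insight is the first one, namely recognizing that the condition ``$x^{k \to j} \in B$ for every $j \in [k-1]$'' says exactly that the combinatorial line coded by $x$ lies in $B$. Once this bijection is in hand, the rest is just a bookkeeping comparison of the equal-slices measures of degenerate and non-degenerate lines, together with the probabilistic DHJ bound for the smaller alphabet.
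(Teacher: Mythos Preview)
Your proof is correct and follows essentially the same route as the paper: identify $C\setminus[k-1]^m$ with the set of non-degenerate combinatorial lines of $[k-1]^m$ lying in $B$, then invoke the probabilistic version of DHJ$_{k-1}$. The only difference is that the paper applies PDHJ$_{k-1}$ directly to obtain $\theta=\mathrm{PDHJ}(k-1,\delta/4)$ as a lower bound on the density of non-degenerate lines in $B$ (the proof of Theorem~\ref{probabilisticdhj} in fact produces lines with fixed coordinates of every value, hence non-degenerate), whereas you more cautiously treat PDHJ as bounding possibly-degenerate lines and then subtract the degenerate contribution via Lemma~\ref{degenerate}, ending with $\theta=\theta_0/2$. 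This extra step is harmless and arguably tidier given the slight ambiguity in the statement of Theorem~\ref{probabilisticdhj}, but it costs you a factor of $2$ that the paper does not lose.
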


\begin{proof}
There is a one-to-one correspondence between combinatorial lines in
$B$ and points in $C\setminus[k-1]^m$. Moreover, this one-to-one
correspondence preserves equal-slices measure (for the trivial reason
that we defined the equal-slices measure on the set of combinatorial
lines in $[k-1]^m$ by treating them as points in $[k]^m$). By the
probabilistic version of DHJ$_{k-1}$ there exists $\theta=PDHJ(k-1,\delta/4)>0$
such that the equal-slices density of combinatorial lines in $B$ is
at least $\theta$. 
\end{proof}

From this lemma and Lemma \ref{degenerate} we see that 
$\nu(A\cap C)\leq(k/\theta m)\nu(C)$. (Recall that $\nu$ is the equal-slices
measure.) If $m$ is large enough, that will be 
significantly less than $\delta$. This is the sense in which $A$ is ``almost 
disjoint" from $C$.

\subsection{A ``simple" locally dense set that correlates with $A$}

\begin{lemma} \label{firstincrement}
Let $A$, $B$ and $C_1,\dots,C_{k-1}$ be the subsets of $[k]^m$ defined in Lemma \ref{disjointsimpleset}, let $C=C_1\cap\dots\cap C_{k-1}$, and suppose that
$C$ has density $\theta$. Let $0<\gamma\leq\delta/4$ and suppose also 
that $\nu(A)\geq\delta-\gamma$ and that $\nu(A\cap C)\leq(\delta/2)\nu(C)$. 
Then there exist sets $D_1,\dots,D_{k-1}$ such
that $D_i$ is $ik$-insensitive for each $i$ and such that 
$\nu(A\cap D)\geq(\delta-\gamma)\nu(D)+\delta\theta/4k$, where 
$D=D_1\cap\dots\cap D_{k-1}$.
\end{lemma}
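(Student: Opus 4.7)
The plan is to exploit the hypothesis $\nu(A\cap C)\le(\delta/2)\nu(C)$ together with $\nu(A)\ge\delta-\gamma$ to show that $A$ must make up its ``missing mass'' somewhere in $C^c=[k]^m\setminus C$, and then to decompose $C^c$ as a \emph{telescoping nested} disjoint union of only $k-1$ pieces (rather than the $2^{k-1}$ cells of the natural Boolean partition). This is the crucial trick that yields the required dependence $\delta\theta/4k$ on $k$ rather than something exponential.

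Concretely, define the ``shells''
\[
G_j \;=\; C_1\cap\cdots\cap C_{j-1}\cap C_j^c \qquad(j=1,\dots,k-1),
\]
so that a point of $[k]^m$ lies in $G_j$ precisely when $j$ is the smallest index for which it fails to belong to $C_j$. The $G_j$ are pairwise disjoint and their union is exactly $C^c$. Moreover each $C_i$ (and hence $C_i^c$) is $ik$-insensitive by Lemma~\ref{disjointsimpleset}, so for any fixed $j^\ast$ the shell $G_{j^\ast}$ can be expressed as $D_1\cap\cdots\cap D_{k-1}$ with $D_i=C_i$ for $i<j^\ast$, $D_{j^\ast}=C_{j^\ast}^c$, and $D_i=[k]^m$ for $i>j^\ast$, each of which is $ik$-insensitive.

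The averaging is now immediate. Summing over $j$,
\[
\sum_{j=1}^{k-1}\nu(A\cap G_j) \;=\; \nu(A\cap C^c) \;=\; \nu(A)-\nu(A\cap C) \;\ge\; (\delta-\gamma)-\tfrac{\delta}{2}\theta,
\]
while $\sum_{j}\nu(G_j)=1-\theta$. Subtracting $(\delta-\gamma)$ times the second identity from the first gives
\[
\sum_{j=1}^{k-1}\bigl[\nu(A\cap G_j)-(\delta-\gamma)\nu(G_j)\bigr] \;\ge\; \bigl(\tfrac{\delta}{2}-\gamma\bigr)\theta \;\ge\; \tfrac{\delta\theta}{4},
\]
where the last step uses the assumption $\gamma\le\delta/4$. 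By pigeonhole over the $k-1$ terms, some $j^\ast$ satisfies
\[
\nu(A\cap G_{j^\ast}) \;\ge\; (\delta-\gamma)\nu(G_{j^\ast}) + \frac{\delta\theta}{4(k-1)} \;\ge\; (\delta-\gamma)\nu(G_{j^\ast}) + \frac{\delta\theta}{4k},
\]
which, together with the insensitive representation of $G_{j^\ast}$ above, is exactly the conclusion of the lemma.

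The only genuine choice in the argument is the decomposition of $C^c$: the ``naive'' Boolean partition into $2^{k-1}-1$ nonempty cells $\bigcap_{i\in S}C_i^c\cap\bigcap_{i\notin S}C_i$ would give the strictly weaker bound $\delta\theta/4(2^{k-1}-1)$, which fails the target inequality once $k\ge 4$. The linear-sized telescoping into the $G_j$'s is what keeps the denominator at $k$; this is the step where a little care is needed, but once the right decomposition is chosen the rest is a one-line averaging.
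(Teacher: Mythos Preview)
Your proof is correct and follows essentially the same approach as the paper's: both partition $[k]^m$ into $C$ together with $k-1$ telescoping ``shells'' and then pigeonhole over these $k-1$ pieces to locate the density increment. Your sets $G_j=C_1\cap\cdots\cap C_{j-1}\cap C_j^c$ are exactly the paper's $D^{(j)}$ (the displayed formula in the paper carries extra factors $C_{j+1}^c\cap\cdots\cap C_{k-1}^c$, evidently a typo, since with them the sets would fail to cover $[k]^m$); your padding $D_i=[k]^m$ for $i>j^\ast$ is the right way to put $G_{j^\ast}$ into the required form $D_1\cap\cdots\cap D_{k-1}$.
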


\begin{proof}
We begin with the observation that
\begin{equation*}
[k]^m=\bigcup_{i=1}^k C_1\cap\dots\cap C_{i-1}\cap C_i^c\cap\dots\cap C_{k-1}^c,
\end{equation*}
and that this union is in fact a partition of $[k]^m$. For each $i$ let us
write $D^{(i)}$ for the set $C_1\cap\dots\cap C_{i-1}\cap C_i^c\cap\dots\cap C_{k-1}^c$.
Then $D^{(k)}=C$. From our assumptions, we know that 
\begin{align*}
\nu(A\cap(D^{(1)}\cup\dots\cup D^{(k-1)}))&\geq\delta-\gamma-(\delta/2)\nu(D^{(k)}\\
&=(\delta-\gamma)(1-\nu(D^{(k)}))+(\delta/2-\gamma)\nu(D^{(k)})\\
&\geq(\delta-\gamma)(1-\nu(D^{(k)}))+\delta\theta/4.\\
\end{align*}
Since $1-\nu(D^{(k)})=\nu(D^{(1)}\cup\dots\cup D^{(k-1)})$, it follows by averaging
that there exists $i$ such that $\nu(A\cap D^{(i)})\geq\delta-\gamma+\delta\theta/4(k-1)$.
Since both $C_i$ and $C_i^c$ are $ik$-insensitive, this proves the lemma.
\end{proof}

Now for the next part of our argument we need to use the uniform measure. 
In order to do this, we must use our measure-transfer results again. Basically,
all we do is randomly restrict to a small subspace $V$ with the uniform measure
on it and apply Lemma \ref{equalslicestouniform}, but that is not quite the whole
story since we want \textit{two} things to happen: that the relative density of
$A\cap D\cap V$ inside $D\cap V$ is still bigger than $\delta$, and also that
the relative density of $D\cap V$ inside $V$ is not too small.

\begin{lemma} \label{restrict}
Let $\beta>0$ and let $k,r$ and $m$ be positive integers such that
$r\leq\min\{\beta m/8k,\allowbreak 
\beta m/2k^2\}$. Let $A$ and $D$ be subsets of $[k]^m$ such that 
$\nu(A\cap D)\geq (\delta-\gamma)\nu(D)+3\beta$.
Then there exists a combinatorial subspace $V$ of $[k]^m$ of dimension
$r$ such that $\mu_V(D\cap V)\geq\gamma\mu(V)$ and
$\mu_V(A\cap D\cap V)\geq(\delta-\gamma)\mu_V(D\cap V)+\beta$,
where $\mu_V$ is the uniform probability measure on $V$.
\end{lemma}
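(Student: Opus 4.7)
The plan is to apply Lemma~\ref{equalslicestouniform} twice to transfer both $\nu(A \cap D)$ and $\nu(D)$ to typical uniform densities on a random $r$-dimensional subspace, and then combine the two bounds by linearity of expectation. The hypothesis $r \leq \min\{\beta m/8k, \beta m/2k^2\}$ is exactly what is needed to invoke Lemma~\ref{equalslicestouniform} with error parameter $\beta$ (with the roles of $n$ and $m$ in that lemma played by $m$ and $r$ here). Pick a random $J \subset [m]$ of size $r$, a uniform $x \in [k]^J$, and an equal-slices-random $y \in [k]^{\overline{J}}$; then $(x,y)$ has the property that for any set $E \subseteq [k]^m$, $\Pr[(x,y) \in E]$ lies within $\beta$ of $\nu(E)$. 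Since this probability equals $\mathbb{E}_{J,y}\mu_{S_{J,y}}(E \cap S_{J,y})$, we obtain
\[
\mathbb{E}_{J,y}\,\mu_{S_{J,y}}(A \cap D \cap S_{J,y}) \;\geq\; \nu(A\cap D) - \beta \;\geq\; (\delta-\gamma)\nu(D) + 2\beta,
\]
and
\[
\mathbb{E}_{J,y}\,\mu_{S_{J,y}}(D \cap S_{J,y}) \;\leq\; \nu(D) + \beta.
\]

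Combining these two estimates by linearity of expectation gives
\[
\mathbb{E}_{J,y}\bigl[\mu_{S_{J,y}}(A\cap D\cap S_{J,y}) - (\delta-\gamma)\mu_{S_{J,y}}(D\cap S_{J,y})\bigr] \;\geq\; 2\beta - (\delta-\gamma)\beta \;\geq\; \beta,
\]
where the last step uses $\delta - \gamma \leq 1$. By averaging, there exists a choice $(J,y)$ for which the bracketed quantity is at least $\beta$; setting $V = S_{J,y}$, which is an $r$-dimensional combinatorial subspace of $[k]^m$, gives $\mu_V(A\cap D\cap V) \geq (\delta-\gamma)\mu_V(D\cap V) + \beta$. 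The lower bound on $\mu_V(D\cap V)$ then follows either from a second averaging step (discarding the low-probability event where $\mu_V(D\cap V)$ is much smaller than $\nu(D)$, which has small mass by Markov applied to the upper bound above), or directly from $\mu_V(A\cap D\cap V) \leq \mu_V(D\cap V)$, which together with the density-increment inequality forces $\mu_V(D\cap V) \geq \beta/(1-\delta+\gamma)$.

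The main obstacle is really bookkeeping: one has to be careful that the $3\beta$ gap in the hypothesis is enough to absorb one loss of $\beta$ from each application of Lemma~\ref{equalslicestouniform} plus the $(\delta-\gamma)\beta$ ``multiplicative'' error coming from the weighted combination, leaving at least $\beta$ for the final density increment. There is no real content beyond the measure-transfer machinery already set up; the point of the lemma is precisely to package the switch from the equal-slices setting (in which Lemmas~\ref{densediagonal}--\ref{firstincrement} naturally live) to the uniform setting (in which the subspace-partitioning arguments of the next section are carried out), while simultaneously ensuring that the auxiliary insensitive set $D$ does not shrink to the point of uselessness when we restrict to $V$.
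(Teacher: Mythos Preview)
Your argument is correct and is essentially identical to the paper's own proof: the paper also chooses $V=S_{J,y}$ for random $(J,y)$, applies Lemma~\ref{equalslicestouniform} to both $A\cap D$ and $D$, combines the two estimates linearly to get $\mathbb{E}[\mu_V(A\cap D\cap V)-(\delta-\gamma)\mu_V(D\cap V)]\geq\beta$, and then averages. The paper likewise obtains the lower bound on $\mu_V(D\cap V)$ as an immediate consequence of the density-increment inequality together with $\mu_V(A\cap D\cap V)\leq\mu_V(D\cap V)$, just as in your second suggested derivation.
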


\begin{proof}
Let us choose $V$ by randomly choosing a set $J\subset [m]$ of 
size $r$, randomly choosing $y\in[k]^{\overline{J}}$ using
equal-slices measure, and taking the subspace $S_{J,y}$. 
By Lemma \ref{equalslicestouniform}, the expectation of
$\mu_V(A\cap D\cap V)-(\delta-\gamma)\mu_V(D\cap V)$ is at
least $\nu(A\cap D)-\beta-(\delta-\gamma)\nu(D)-\beta$, which 
is at least $\beta$ by our assumed lower bound for $\nu(A\cap D)$.
\end{proof}

Note that the conclusion of the lemma implies that $\mu_V(D\cap V)$
is at least $\eta$. 
\medskip

Let us now put together the results of this section.

\begin{lemma}\label{firstmainlemma}
Let $\delta>0$, let $k$ be a positive integer, let $\theta=$PDHJ$(k-1,\delta/4)$,
let $\eta=\delta^2\theta/96k$, let $\beta=\delta\theta/12k$ and let
$\gamma=4\delta^{-1}\eta=\delta\theta/24k=\beta/2$. Let $n$ be a positive integer, let 
$m=\lfloor n^{1/4}\rfloor$, let $r=\lfloor \beta m/8k^2\rfloor$ and suppose 
that $n\geq(16k/\eta)^{12}$. Let $A$ be a subset of $[k]^n$ of uniform density $\delta$. 
Then either $A$ contains a combinatorial line or there is an $r$-dimensional combinatorial 
subspace $W$ of $[k]^n$ and sets $D_1,\dots,D_{k-1}\subset W$ such that $D_j$ is $jk$-insensitive for
each $j$, and such that if we set $D$ to be $D_1\cap\dots\cap D_{k-1}$, then
$\mu_W(D)\geq\gamma$ and $\mu_W(A\cap D)\geq(\delta+\gamma)\mu_W(D)$.
\end{lemma}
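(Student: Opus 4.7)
The approach is to chain Lemmas~\ref{densediagonal}, \ref{disjointsimpleset}, \ref{densityofC}, \ref{firstincrement}, and \ref{restrict} in sequence, following the template laid out in the sketch of Section~5. First I apply Lemma~\ref{densediagonal} to $A$ with the stated $\eta$ and $m$, obtaining a pair $(J,y)$ satisfying either (i) the equal-slices density of $A$ on $S_{J,y}$ is at least $\delta+\eta$, or (ii) that density is at least $\delta-\gamma$ and moreover the equal-slices density of $A$ on $S'_{J,y}$ is at least $\delta/4$. I may assume $A$ contains no combinatorial line, since otherwise the conclusion holds trivially.

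Case~(ii) is the main case. Identifying $S_{J,y}$ with $[k]^m$ and setting $B=A\cap[k-1]^m$, Lemma~\ref{disjointsimpleset} supplies $jk$-insensitive sets $C_1,\dots,C_{k-1}\subset[k]^m$ whose intersection $C$ satisfies $A\cap C\subset[k-1]^m$. Because $\nu(B)\geq\delta/4$, Lemma~\ref{densityofC}---which is exactly where $\mathrm{PDHJ}_{k-1}$ enters, giving rise to $\theta=\mathrm{PDHJ}(k-1,\delta/4)$---yields $\nu(C)\geq\theta$. Combined with the bound $\nu(A\cap C)\leq\nu([k-1]^m)\leq k/m$ from Lemma~\ref{degenerate}, and the fact that $m=\lfloor n^{1/4}\rfloor$ is much larger than $2k/(\delta\theta)$ under the hypothesis $n\geq(16k/\eta)^{12}$, the inequality $\nu(A\cap C)\leq(\delta/2)\nu(C)$ required by Lemma~\ref{firstincrement} is easily met.

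Lemma~\ref{firstincrement} then produces $jk$-insensitive sets $D_1,\dots,D_{k-1}$ whose intersection $D$ satisfies
\[\nu(A\cap D)\geq(\delta-\gamma)\nu(D)+\delta\theta/(4k).\]
By the choice $3\beta=\delta\theta/(4k)$ this is precisely the hypothesis of Lemma~\ref{restrict}, while $r=\lfloor\beta m/(8k^2)\rfloor$ makes both $r\leq\beta m/(8k)$ and $r\leq\beta m/(2k^2)$ hold. Applying Lemma~\ref{restrict} inside $S_{J,y}$ yields an $r$-dimensional combinatorial subspace $W\subset S_{J,y}\subset[k]^n$ with $\mu_W(D\cap W)\geq\gamma$ and $\mu_W(A\cap D\cap W)\geq(\delta-\gamma)\mu_W(D\cap W)+\beta$. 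Since $\beta=2\gamma$ and $\mu_W(D\cap W)\leq 1$, the right-hand side is at least $(\delta+\gamma)\mu_W(D\cap W)$; relabelling the restrictions $D_j\cap W$ as the new $D_j$, each still $jk$-insensitive inside $W$, completes case~(ii).

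Case~(i) gives a strictly more favourable starting point: an equal-slices density jump of $\eta$ inside $S_{J,y}$. It can be absorbed either by passing to a sub-subspace $W$ on which $A$ has uniform density at least $\delta+\gamma$ and taking $D_1=\dots=D_{k-1}=W$ (each trivially $jk$-insensitive), or by iterating the lemma once inside $S_{J,y}$ at the slightly larger density $\delta+\eta$. The main obstacle is not conceptual but arithmetic: one must thread the parameters $\eta,\beta,\gamma,\theta,m,r$ so that the density output by each lemma matches the density hypothesis of the next, and in particular so that the final identity $\beta=2\gamma$ is what converts the ``$(\delta-\gamma)+\beta$'' increment from Lemma~\ref{restrict} into the required form $(\delta+\gamma)\mu_W(D\cap W)$.
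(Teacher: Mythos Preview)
Your proof follows exactly the same route as the paper's: apply Lemma~\ref{densediagonal}, then in case~(ii) chain Lemmas~\ref{disjointsimpleset}, \ref{densityofC}, \ref{firstincrement}, and \ref{restrict}, checking the parameter arithmetic at each handoff. The verification that $\nu(A\cap C)\leq(\delta/2)\nu(C)$ via $A\cap C\subset[k-1]^m$ and Lemma~\ref{degenerate}, that $3\beta=\delta\theta/4k$, and that $\beta=2\gamma$ converts $(\delta-\gamma)\mu_W(D)+\beta$ into $(\delta+\gamma)\mu_W(D)$, are all exactly what the paper does.

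One point deserves a flag, though it is a flaw you share with the paper rather than one you introduced. In case~(i) you assert that one can pass to a sub-subspace $W$ with \emph{uniform} density at least $\delta+\gamma$. But case~(i) only gives equal-slices density $\delta+\eta$, and since $\eta=(\delta/4)\gamma<\gamma$, converting to uniform measure via Lemma~\ref{equalslicestouniform} with the stated $r$ costs roughly $\beta=2\gamma$, leaving you well short of $\delta+\gamma$. The paper's own proof is equally cavalier here (``in which case we are done, since we can pass to a random $r$-dimensional subspace and on average have the same density increment''). Your second suggestion, iterating inside $S_{J,y}$, is the morally correct fix: one keeps reapplying Lemma~\ref{densediagonal} until case~(ii) occurs, which it must since the density cannot exceed~$1$; but then the subspace dimension and the parameters shift. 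For the purposes of the overall density-increment strategy in \S\ref{sec:theproof} this is harmless, since any positive increment (even of size $\eta/2$) on an $r$-dimensional subspace suffices to drive the iteration. As a standalone proof of the lemma exactly as stated, however, neither your first option nor the paper's parenthetical quite closes the gap.
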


\begin{proof}
Let $m=\lfloor n^{1/4}\rfloor$. Then, by Lemma \ref{densediagonal}, 
either there is 
an $m$-dimensional subspace $V$ such that $\mu_V(A)\geq\delta+\eta$,
in which case we are done (since we can pass to a random $r$-dimensional
subspace of $V$ and on average we will have the same density increment)
or there is an $m$-dimensional subspace $V$
such that the equal-slices density of $A$ in $V$ is at least $\delta-4\eta\delta^{-1}$
and the equal-slices density of $A$ in $V'$ is at least $\delta/4$, where
$V'$ is the set of points in $V$ with no variable coordinate equal to $k$.

Let $B=A\cap V'$. Then Lemma \ref{densityofC} gives us a $\theta>0$ and
sets $C_1,\dots,C_{k-1}$ such that $C_i$ is $ik$-insensitive, the intersection
$C=C_1\cap\dots\cap C_{k-1}$ is such that $C\setminus V'$ has equal-slices
density at least $\theta$, and $C\setminus V'$ is disjoint from $A$. The value
of $\theta$ can be taken to be PDHJ$(k-1,\delta/4)$. 

Let $\gamma=4\eta\delta^{-1}=\beta/2$. It is easily checked that $k/\theta m\leq\delta/2$
and that $\delta\theta/4k\geq 2\gamma$.
Therefore, Lemma \ref{firstincrement} tells us that we can find sets $D_1,\dots,D_{k-1}$
such that $D_i$ is $ik$-insensitive, and such that if $D=D_1\cap\dots\cap D_{k-1}$,
then $\nu(A\cap D)\geq(\delta-\gamma)\nu(D)+\delta\theta/4k$. 

Finally, Lemma \ref{restrict} with $\beta=\delta\theta/12k$ gives us an 
$r$-dimensional subspace $W$ of $V$ such that 
$\mu_W(A\cap D\cap W)\geq(\delta-\gamma)(D\cap W)+\beta$. 
This implies that $\mu_W(A\cap D\cap W)\geq(\delta+\gamma)\mu_W(D\cap W)$
and that $\mu_W(D\cap W)\geq\gamma$, as claimed.
\end{proof} 

\section{Almost partitioning low-complexity sets into subspaces}

We have completed one of the two main stages of the proof, which corresponds
to the first three steps of the proof we sketched of the corners theorem (and also
to the first three steps of out sketch proof of DHJ$_3$). In this section we shall
carry out a task that corresponds to the next two steps. So far, we have obtained
a density increment on a dense subset $D$ of a subspace $W$. This helps us,
because $D$ is an intersection of $ik$-insensitive sets, and therefore has
low complexity, in a certain useful sense. Our job now is to show that low-complexity
sets can be almost completely partitioned into combinatorial subspaces with 
dimension tending to infinity. To prove this, we shall follow the scheme of
argument presented in Section \ref{partitioning}. (That argument was
presented for the case $k=3$, but it can be straightforwardly generalized.)

\subsection{A $1k$-insensitive set can be almost entirely partitioned into large subspaces}

We begin by proving the result for $1k$-insensitive sets, and hence for $jk$-insensitive
sets whenever $j<k$. It will then be straightforward to deduce the result for intersections
of such sets.

\begin{lemma} \label{partitioninglemma}
Let $\eta>0$, and let $d$, $m$ and $n$ be positive integers with
$m\geq$MDHJ$_{k-1}(d,\eta)$ and $n\geq \eta^{-1}m(k+d)^m$. 
Let $D$ be a $1k$-insensitive subset of $[k]^n$. Then there are disjoint combinatorial subspaces 
$V_1,\dots,V_N$, each of which has dimension $d$ and is a subset of $D$, 
such that $\mu(V_1\cup\dots\cup V_N)\geq\mu(D)-3\eta$.
\end{lemma}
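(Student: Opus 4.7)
The plan is to carry out the iterative extraction scheme outlined in \S\ref{partitioning}. A single extraction step takes a $1k$-insensitive set $E \subset [k]^R$ of uniform density at least $\eta$ (relative to $[k]^R$), picks a subset $J \subset R$ of size $m$, and produces a \emph{slab} $U^* \times T \subset E$, where $U^* \subset [k]^J$ is a $d$-dimensional combinatorial subspace and $T \subset [k]^{R \setminus J}$. To find such a slab, I would use Lemma \ref{uniformtouniform} to argue that, when $y$ is drawn uniformly from $[k]^{R\setminus J}$, the expected density of the fiber $\tilde E_y := \{x \in [k-1]^J : (x, y) \in E\}$ in $[k-1]^J$ is close to the uniform density of $E$ in $[k]^R$, namely at least $\eta$ up to a small total-variation loss. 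Hence for a positive-measure set of $y$'s the density of $\tilde E_y$ in $[k-1]^J$ is close to $\eta$, and the hypothesis $m \geq$ MDHJ$_{k-1}(d,\eta)$ (absorbing small constants into rounding) lets us apply MDHJ$_{k-1}$ to produce a $d$-dimensional combinatorial subspace $U_y \subset \tilde E_y$. Using the $1k$-insensitivity of $E$, the extension $U_y^* \subset [k]^J$ obtained by also allowing each wildcard to take value $k$ still satisfies $U_y^* \times \{y\} \subset E$. A pigeonhole argument over the at most $(k+d)^m$ possible $d$-dimensional subspaces of $[k]^J$ then yields a single $U^*$ and a set $T := \{y : U^* \times \{y\} \subset E\}$ of density $\theta = \Omega(\eta/(k+d)^m)$ in $[k]^{R \setminus J}$. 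The slab $U^* \times T$ decomposes into $|T|$ disjoint $d$-dimensional combinatorial subspaces of $[k]^R$, each contained in $E$; and crucially $T$ is itself $1k$-insensitive because $E$ is.

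With this single-step extraction in hand, I would iterate by building a tree of ``cells''. The root is $(\emptyset, D)$; an internal node $(x, E)$ with $E \subset [k]^R$ of local density at least $\eta$ undergoes one extraction using $m$ fresh coordinates $J \subset R$ and spawns $k^m$ children, one for each $x' \in [k]^J$. The $x'$-child carries the fiber $(E \setminus (U^* \times T))_{x'}$, which equals $E_{x'} \setminus T$ when $x' \in U^*$ and $E_{x'}$ otherwise. Although the residue may fail to be globally $1k$-insensitive --- as illustrated by the example following the sketch in \S\ref{partitioning} --- each fiber is $1k$-insensitive inside $[k]^{R \setminus J}$, since it is either a $1k$-insensitive set or the difference of two such. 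A cell whose local density has fallen below $\eta$ becomes a leaf and its global mass is added to the error set. The extracted subspaces collected at all the internal nodes form the desired family $V_1, \dots, V_N$.

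The chief obstacle is the accounting: one must show that the total mass of leaves does not exceed $3\eta$. Each extraction at a depth-$t$ cell contributes extracted global mass at least $\theta \cdot k^{-tm}$, and along any root-to-leaf path the local density decreases on average (across children) by at least $\theta$ per step, so each path has length $O(1/\theta) = O((k+d)^m/\eta)$. The hypothesis $n \geq \eta^{-1}m(k+d)^m$ ensures that there are enough fresh coordinates available to accommodate these paths without anyone ``running out of dimensions''. A careful depth-by-depth bookkeeping, balancing the number of cells at each depth, the extracted mass per cell, and the below-$\eta$ local density of each leaf, then yields the desired error estimate. I expect this quantitative bookkeeping to be the most delicate step of the proof: the naive bound of $\eta$ per depth, summed over depths, would give $\eta\cdot n/m$ and is far too weak, so the tighter bound must exploit the fact that the total extracted mass is capped by $\mu(D)\le 1$ in order to limit the effective number of internal tree nodes.
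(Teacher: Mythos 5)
The single-step extraction you describe — average of $1k$-insensitive fibers over $[k-1]^J$, MDHJ$_{k-1}$ to find a $d$-dimensional subspace of $[k-1]^J$, extend to $[k]^J$ by $1k$-insensitivity, pigeonhole over the at most $(k+d)^m$ subspaces — is exactly the step in the paper's proof, and the observation that the residue is locally but not globally $1k$-insensitive is the same one the paper makes. The gap is in the iteration.

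Your tree iterates by doing the pigeonhole \emph{within each cell separately}: an active cell $(x,E)$ chooses its own $U^*$ and $T$, spawns $k^m$ children, and the accounting is then supposed to control the total leaf mass. Two things go wrong. First, the claim that ``along any root-to-leaf path the local density decreases on average (across children) by at least $\theta$ per step, so each path has length $O(1/\theta)$'' is false: a child $x'$ inherits density $\mu(E_{x'})-\mathbf{1}[x'\in U^*]\mu(T)$, so only the $k^{d-m}$-fraction of children lying inside $U^*$ see any drop, the averaged drop over children is $k^{d-m}\mu(T)$ rather than $\mu(T)$, and worst-case paths (which always branch to a child outside $U^*$) never lose any density and can run for the full depth $n/m$. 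Second, because extraction at depth $t$ is only proportional to the ambient measure $a_t$ of the active cells, rather than to a constant, the naive bookkeeping does not give the termination you need within $n/m = \eta^{-1}(k+d)^m$ levels; working this out carefully costs you at least an extra $\log(1/\eta)$-type factor in the required $n$, so your scheme does not prove the lemma with the hypothesis $n\geq\eta^{-1}m(k+d)^m$ as stated.

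The paper avoids the tree entirely. It keeps a single set $D_r\subset[k]^n$, writes its elements as $(x,y,z)$ with $x\in[k]^{rm}$ (the ``used'' prefix), $y\in[k]^m$ (the fresh block) and $z\in[k]^{n-(r+1)m}$, and at step $r$ performs \emph{one} pigeonhole across all pairs $(x,z)$ simultaneously: since the density of pairs $(x,z)$ for which $\{y\in[k-1]^m:(x,y,z)\in D_r\}$ is dense is at least $\eta$, some single $U\subset[k]^m$ satisfies $\{x\}\times U\times\{z\}\subset D_r$ for a density-$\eta(k+d)^{-m}$ set $T$ of pairs. Removing $T\times U$ reduces $\mu(D_r)$ by a fixed amount at every step, independently of how that mass is distributed across prefixes; the process stops once $\mu(D_r)$ drops below $3\eta$, after at most $\eta^{-1}(k+d)^m$ iterations. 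This global pigeonhole (rather than a per-fiber one) is the key structural idea that makes the iteration count match the hypothesis on $n$, and it is the idea your proposal is missing.
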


\begin{proof}
Let us write a typical element of $[k]^n$ as $(x,y)$, where
$x\in[k]^m$ and $y\in[k]^{n-m}$. For each $y$ let us write $D_y$ for the 
set $\{x\in[k]^m:(x,y)\in D\}$ and $E_y$ for the set $\{x\in[k-1]^m:(x,y)\in D\}=E_y\cap[k-1]^m$. 
Then by Lemma \ref{uniformtouniform} the average density of the sets $E_y$
is at least $\gamma-\eta\geq 2\eta$. It follows that the density of $y$ such that 
$E_y$ has density at least $\eta$ (in $[k-1]^m$) is at least $\eta$. 

If $E_y$ has density at least $\eta$, then by
our assumption about $m$ it follows that it contains a $d$-dimensional 
combinatorial subspace $U'_y$ (where this means a subspace of $[k-1]^m$).
Since $D$ is $1k$-insensitive, and therefore so is $D_y$, it follows that $D_y$
contains a $d$-dimensional combinatorial subspace $U_y$ (where this 
means a subspace of $[k]^m$). 

The number of possible $d$-dimensional subspaces of $[k]^m$ is at most
$(k+d)^m$ (since we have to decide for each coordinate $i\in[m]$ whether
to give it a fixed value in $[k]$ or to put it into one of the $d$ wildcard sets),
so by the pigeonhole principle there must exist a subspace $U\subset [k]^m$ such that the set 
$T=\{y\in[k]^{n-m}:U\times\{y\}\subset D\}$ has density at least $\eta(k+d)^{-m}$.
Since $D$ is $1k$-insensitive, it follows that $T$ is also $1k$-insensitive.

The set $U\times T$ is a subset of $D$ of density at least $\eta(k+d)^{-m}$,
and it is a union of the $d$-dimensional subspaces $U\times\{y\}$ with
$y\in T$. We now remove $U\times T$ from $D$. 

The resulting set $D_1=D\setminus(U\times T)$ is not necessarily $1k$-insensitive,
but for every $x\in[k]^m$ the set $\{y:(x,y)\in D_1\}$ \textit{is} $1k$-insensitive:
this follows immediately from the fact that both $D$ and $T$ are $1k$-insensitive.
Thus, we can at least partition $[k]^n$ into subspaces inside each of which $D_1$
is $1k$-insensitive.

This gives us the basis for an inductive argument. The inductive hypothesis is
that $D_r$ is a set of density at least $2\eta$ such that for every $x\in[k]^{rm}$
the set $\{y\in[k]^{n-rm}:(x,y)\in D_r\}$ is $1k$-insensitive, and that $D\setminus D_r$
is a union of $d$-dimensional subspaces of density at least $\eta(k+d)^{-m}$. We 
have essentially just given the proof
of the inductive step, but we need to generalize the argument very slightly.

To do this, let us write a typical element of $D_r$ as $(x,y,z)$ with $x\in[k]^{rm}$,
$y\in[k]^m$ and $z\in[k]^{n-(r+1)m}$. For each $x\in[k]^{rm}$ let $(D_r)_x$ be
$\{(y,z)\in[k]^{n-rm}:(x,y,z)\in D_r\}$ and for each pair $(x,z)$, let $(E_r)_{x,z}$ be
the set $\{y\in[k-1]^m:(x,y,z)\in D_r\}$. Then the average density of the sets
$(D_r)_x$ is the density of $D_r$, which is at least $3\eta$. It follows from 
Lemma \ref{uniformtouniform} that the average density of the sets $(E_r)_{x,z}$
is at least $2\eta$, provided that $n-rm\geq(12/\eta)^{12}$. Therefore, the
density of pairs $(x,z)$ such that $(E_r)_{x,z}$ has density at least $\eta$ is
at least $\eta$. 

If $(E_r)_{x,z}$ has density at least $\eta$, then it contains a $d$-dimensional
combinatorial subspace $U'_{x,z}$, where this is a subspace of $[k-1]^m$.
Since $(D_r)_x$ is $1k$-insensitive, it follows that it also contains a 
$d$-dimensional combinatorial subspace $U_{x,z}$, where this time we
mean a subspace of $[k]^m$. By the pigeonhole principle there is a 
$d$-dimensional subspace $U\subset[k]^m$ such that the set
$T=\{(x,z)\in[k]^{rm}\times[k]^{n-(r+1)m}:\{x\}\times U\times\{z\}\subset D_r\}$ 
has density at least $\eta(k+d)^{-m}$.

Let $D_{r+1}=D_r\setminus T\times U$ (where we interpret $T\times U$ to
mean $\{(x,y,z):(x,z)\in T, y\in U\}$). Then $T\times U$ is a union of 
$d$-dimensional subspaces of density at least $\eta(k+d)^{-m}$, and 
for every $(x,y)$ the set $\{z\in[k]^{n-(r+1)m}:(x,y,z)\in D_{r+1}\}$ is 
$1k$-insensitive.

Clearly we cannot iterate this process more than $\eta^{-1}(k+d)^m$
times. Therefore, since $n\geq \eta^{-1}m(k+d)^m$, it follows that we
can write $D$ as a disjoint union of $d$-dimensional combinatorial 
subspaces and a residual set of density at most $3\eta$, as claimed.
\end{proof}

\subsection{An intersection of $jk$-insensitive sets can be almost 
entirely partitioned into large subspaces}

The main result of this subsection is a very straightforward consequence
of Lemma \ref{partitioninglemma}. Let $F$ be the function that bounds
$n$ in terms of $d$ in that lemma (and also $\eta$ and $k$, which we
shall regard as fixed): that is, $F(d)=\lceil\eta^{-1}m(k+d)^m\rceil$, where
$m=$MDHJ$_{k-1}(d,\eta)$. Let $F^{(k-1)}(d)$ denote the result
of applying $F$ to $d\ $ $k-1$ times.

\begin{lemma} \label{mainpartitionlemma}
Let $\eta>0$, and let $d$ and $n$ be positive integers such that
$n\geq F^{(k-1)}(d)$. For each
$j\in[k-1]$ let $D_j$ be a $jk$-insensitive subset of $[k]^n$ and let
$D=D_1\cap\dots\cap D_{k-1}$. Then there are disjoint combinatorial subspaces 
$V_1,\dots,V_N$, each of which has dimension $d$ and is a subset of $D$, 
such that $\mu(V_1\cup\dots\cup V_N)\geq\mu(D)-3(k-1)\eta$.
\end{lemma}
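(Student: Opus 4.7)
The natural approach is a short induction on the number of insensitive sets being intersected, peeling off one $D_j$ at a time by repeatedly invoking Lemma \ref{partitioninglemma}. The base case --- a single $jk$-insensitive set --- is Lemma \ref{partitioninglemma} itself: although that lemma is stated for $1k$-insensitive sets, its proof is symmetric in the two values and goes through unchanged for any $jk$-insensitive set with $j < k$, and the error $3\eta$ matches $3(k-1)\eta$ in the trivial $k=2$ case.

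For the inductive step I would assume the statement for any intersection of $k-2$ insensitive sets (of types drawn from $\{jk : j < k\}$) in ambient dimension $F^{(k-2)}(d)$. Given $D_1,\dots,D_{k-1}$ in $[k]^n$ with $n \geq F^{(k-1)}(d) = F(F^{(k-2)}(d))$, the first step is to apply Lemma \ref{partitioninglemma} to $D_1$ alone with target dimension $F^{(k-2)}(d)$, obtaining disjoint combinatorial subspaces $U_1,\dots,U_M \subseteq D_1$ of dimension $F^{(k-2)}(d)$ with $\mu(D_1 \setminus \bigcup_\ell U_\ell) \leq 3\eta$.

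The key structural remark, which I would record as a short preliminary, is that insensitivity is preserved under restriction to a combinatorial subspace. Concretely, if $\phi : [k]^{d'} \to U$ is the canonical isomorphism onto a $d'$-dimensional subspace $U \subseteq [k]^n$ and $E \subseteq [k]^n$ is $jk$-insensitive, then $\phi^{-1}(E \cap U) \subseteq [k]^{d'}$ is also $jk$-insensitive: swapping some $j$'s and $k$'s in $z \in [k]^{d'}$ corresponds to swapping some $j$'s and $k$'s on the wildcard coordinates of $\phi(z)$, which remains in $E$ by assumption. Consequently, inside each $U_\ell \cong [k]^{F^{(k-2)}(d)}$ the restrictions of $D_2,\dots,D_{k-1}$ form an intersection of $k-2$ insensitive sets (of types $2k,3k,\dots,(k-1)k$), to which the induction hypothesis applies.

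This yields, within each $U_\ell$, disjoint $d$-dimensional subspaces whose union covers all but a $3(k-2)\eta$ fraction (with respect to $\mu_{U_\ell}$) of $D_2 \cap \dots \cap D_{k-1} \cap U_\ell$; and these subspaces automatically lie in $D$ because $U_\ell \subseteq D_1$. Adding the two losses --- $\mu(D \setminus \bigcup_\ell U_\ell) \leq \mu(D_1 \setminus \bigcup_\ell U_\ell) \leq 3\eta$ from the first step, and $3(k-2)\eta$ weighted-summed over $\ell$ from the induction --- gives the advertised total of $3(k-1)\eta$. The main obstacle is really just the bookkeeping around the recursion $F^{(k-1)}$ and the restriction-preserves-insensitivity check; there is no new idea beyond iterating Lemma \ref{partitioninglemma}.
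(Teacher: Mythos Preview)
Your proposal is correct and follows essentially the same route as the paper: induction on the number of insensitive sets in the intersection, peeling off one set via Lemma~\ref{partitioninglemma} at target dimension $F^{(k-2)}(d)$, observing that insensitivity is preserved under restriction to a combinatorial subspace, and then applying the inductive hypothesis inside each subspace with the errors adding to $3(k-1)\eta$. The only cosmetic difference is that the paper peels off the last set $D_j$ rather than $D_1$, which is immaterial.
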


\begin{proof}
We prove the result by induction on the number of insensitive sets
in the intersection (which is not quite the same as proving it by induction
on $k$). That is, we prove by induction that if $n\geq F^{(j)}(d)$ then
the conclusion of the lemma holds for $D^{(j)}=D_1\cap\dots\cap D_j$
and with an error of at most $3j\eta$ instead of $3(k-1)\eta$.

Lemma \ref{partitioninglemma} does the case $j=1$. In general, if
we have the result for $j-1$, then let $n\geq F^{(j)}(d)=F(F^{(j-1}(d))$.
Then by Lemma \ref{partitioninglemma} we can partition $D_j$
into combinatorial subspaces $V_1,\dots,V_N$ of dimension
$F^{(j-1)}(d)$ together with a residual set of density at most
$3\eta$. The intersection of any $D_h$ with any $V_i$ is $hk$-insensitive,
and $V_i\subset D_j$, so
\begin{equation*}
D^{(j)}\cap V_i=D^{(j-1)}\cap V_i=(D_1\cap V_i)\cap\dots\cap(D_{j-1}\cap V_i)
\end{equation*}
is an intersection of insensitive sets to which we can apply the inductive
hypothesis.

That allows us to partition each $V_i$ into combinatorial subspaces
$V_{is}$ of dimension $d$ together with a residual set of relative
density (in $V_i$) at most $3(j-1)\eta$. The union of these new residual sets
has density at most $3(j-1)\eta$ in $[k]^n$ (since the subspaces $V_i$
are disjoint), so we have partitioned $D^{(j)}$ into a union of 
$d$-dimensional combinatorial subspaces together with a residual
set of density at most $3j\eta$. This completes the inductive step.
\end{proof}

\section{Completing the proof}

At this stage our argument is essentially finished. In this section we shall
spell out why our lemmas show that DHJ$_k$ follows from DHJ$_{k-1}$.
We shall begin with a qualitative argument. After that, we shall informally
discuss how the bounds we obtain for DHJ$_k$ depend on those that
we obtain for DHJ$_{k-1}$. Finally, we shall exploit the fact that we have
good bounds when $k=2$ to give a more careful analysis of the bounds
we obtain for DHJ$_3$, which turn out to be of tower type.

\subsection{Proof that DHJ$_{k-1}$ implies DHJ$_k$} \label{sec:theproof}

Let $A\subset[k]^n$ be a set of density $\delta$. Our aim will be to find
a combinatorial subspace $V$ of dimension tending to infinity with $n$ 
such that the relative density of $A\cap V$ in $V$ is at least $\delta+c$,
where $c$ depends only on $\delta$ and $k$. If we can do that, then
we will be able to apply a simple iterative argument to complete the proof.

Lemma \ref{firstmainlemma} says that either $A$ contains a combinatorial
line or we can find an $r$-dimensional subspace $W$ and subsets 
$D_1,\dots,D_{k-1}$ of $W$ such that if $D=D_1\cap\dots\cap D_{k-1}$ 
then $\mu_W(D)$ (the density of $D$ inside $W$) is at least $\gamma$ 
and $\mu_W(A\cap D)\geq(\delta+\gamma)\mu(D)$. Here, $r$ tends to
infinity with $n$ for given $\delta$ and $k$ (and increases as $\delta$
increases), and $\gamma$ is a
parameter that depends on $\delta$ and $k$ only. To be precise,
if we let $\theta=$PDHJ$(k-1,\delta/4)$, then we can take $\gamma=\delta\theta/24k$
and $r=\lfloor \delta\theta\lfloor n^{1/4}\rfloor/96k^3\rfloor$. Thus, this
step depends on the fact that DHJ$_{k-1}$ implies PDHJ$_{k-1}$.

Now apply Lemma \ref{mainpartitionlemma} with $[k]^n$ replaced by the $r$-dimensional 
subspace $W$ and with $\eta=\gamma^2/6(k-1)$. Then we can find disjoint 
combinatorial subspaces $V_1,\dots,V_N$ of $W$ such that each has 
dimension equal to the largest $d$ for which $r\geq F^{(k-1)}(d)$,
each is a subset of $D$, and $\mu_W(V_1\cup\dots\cup V_N)\geq\mu_W(D)-\gamma^2/2$.
Here $d$ depends on $\eta$ and $k$ as well as $r$ (the dependence was
suppressed in our notation for the function $F$) and tends to infinity
as $r$ tends to infinity. The function $F$ is defined in terms of the 
function MDHJ$_{k-1}$, so this step depends on the fact that
DHJ$_{k-1}$ implies MDHJ$_{k-1}$.

It follows that 
\begin{align*}
\mu_W(A\cap(V_1\cup\dots\cup V_M))&\geq(\delta+\gamma)\mu(D)-\gamma^2/2\\
&\geq(\delta+\gamma/2)\mu(D)\\
&\geq(\delta+\gamma/2)\mu_W(V_1\cup\dots\cup V_M).\\
\end{align*}
Therefore, by averaging there must be some $i$ such that 
$\mu_W(A\cap V_i)\geq(\delta+\gamma/2)\mu(V_i)$.

Since $d$, the dimension of $W_i$ tends to infinity with $r$ and $r$ tends to 
infinity with $n$, and since $\gamma$ depends on $\delta$ and $k$ only,
we have found our desired density increment on a subspace. We may now
repeat the argument. Either $A\cap V_i$ contains a combinatorial line, or
we can pass to a further subspace (with dimension tending to infinity with
$d$ and hence with $n$) inside which the relative density is at least
$\delta+\gamma$. (In fact, we can do slightly better, since we have now
replaced $\delta$ by $\delta+\gamma/2$ so the density increment at this
second stage will be better than $\gamma/2$.) Since the density of $A$
inside any subspace is always at most 1, there can be at most $2/\gamma$
iterations of this procedure before we eventually find a combinatorial line.
Since this number of iterations depends only on $\delta$ and $k$, if
the original $n$ is large enough, $A$ must have contained a combinatorial 
line.

Since DHJ$_1$ is trivial and DHJ$_2$ follows from Sperner's theorem,
the proof of the general case of DHJ is complete.

\subsection{What bound comes out of the above argument?}

Let us briefly consider how the bound that we obtain for DHJ$_k$ relates
to the bound that we obtain for DHJ$_{k-1}$. 

We note first that EDHJ$(k-1,\delta)$ is bounded above by $(16k^2/\delta)$DHJ$(k,\delta/2)$,
by Corollary \ref{dhjimpliesedhj} (but all we really care about for the purposes of this discussion is
that the two functions are of broadly similar type). Next, recall from Theorem
\ref{equalslicesdhj} that if $A\subset[k-1]^n$ has equal-slices density at least 
$\delta$, then the equal-slices density of the set of combinatorial lines in $A$ 
is at least $(\delta/9)k^{-m}$, where $m=$EDHJ$(k-1,\delta/4)$. That is, 
PDHJ$(k-1,\delta)$ is exponentially small as a function of EDHJ$(k-1,\delta)$,
and hence as a function of DHJ$(k-1,\delta)$. In particular, if DHJ$(k-1,\delta)$
is already a tower-type function, then PDHJ$(k-1,\delta)$ behaves broadly 
like the reciprocal of DHJ$(k-1,\delta)$. It follows that the 
subspace we pass to in Lemma \ref{firstmainlemma} has dimension
broadly comparable to $n/$DHJ$(k-1,\delta)$. Equivalently, if we want
to pass to an $r$-dimensional subspace then we need $n$ to be at least
$r$DHJ$(k-1,\delta)$ or so.

The next step depends on MDHJ$_{k-1}$, and this is where things get
very expensive. The proof we gave of MDHJ$_{k-1}$ yields a bound that
is obtained as follows. Define $G_{k-1}(x)$ to be $\exp(DHJ(k-1,1/x))$.
Then MDHJ$(k-1,d,\delta)$ is bounded above by $G_{k-1}^{(d)}(1/\delta)$,
where $G_{k-1}^{(d)}$ is the $d$-fold iteration of $G_{k-1}$. The function
$F$ that comes into Lemma \ref{mainpartitionlemma} is broadly comparable
to MDHJ$(k-1,d,\delta)$ (again, assuming that MDHJ$(k-1,d,\delta)$ is
at least of tower type), so $F^{(k-1)}$ is something like $G_{k-1}^{(d(k-1))}$.

This function is so much bigger than the function $r\mapsto$DHJ$(k-1,\delta)$
that we can more or less ignore the former. Therefore, if we want to end up 
with a $d$-dimensional subspace after one round of the main iteration, we
need to start with $n$ being something like the $d(k-1)$-fold iteration of 
a function that has similar behaviour to the function $G_{k-1}$ defined
above, which is pretty similar to the function $d\mapsto$DHJ$(k-1,1/d)$.
We then have to run the whole iteration $2/\gamma$ times, where
$\gamma$ is broadly comparable to DHJ$(k-1,\delta)^{-1}$. So eventually
we need $n$ to be larger than $(k-1)d$DHJ$(k-1,\delta)$ iterations of the function
$d\mapsto$DHJ$(k-1,1/d)$, which is roughly DHJ$(k-1,\delta)$ iterations.

To rephrase slightly, if we let RDHJ$_{k-1}(s)=$DHJ$(k-1,1/s)$ (the ``R"
stands for ``reciprocal" here), then RDHJ$_k(s)$ is obtained by iterating
the function RDHJ$_{k-1}$ roughly RDHJ$_{k-1}(s)$ times.

This means that as $k$ increases by 1, the function RDHJ$_k$ goes up
by one level in the Ackermann hierarchy. (It is bigger than the corresponding
level of the Ackermann function, but not in an interesting way.)

\subsection{Bounds for DHJ$_3$}

When $k=3$, we can obtain much better bounds because in this case
we have reasonable bounds for MDHJ$_{k-1}$. Let us therefore do the
analysis a little more carefully.

First, note that Theorems \ref{probabilisticsperner} and \ref{multidimsperner}
tell us that we can take PDHJ$(2,\delta)$ to be $\delta^2/2$ and 
MDHJ$(2,d,\delta)$ to be $25\delta^{-2^d}$. Therefore, returning to 
the argument given in \S\ref{sec:theproof} and setting $k=3$, we can 
take $\theta$ to be $\delta^2/32$, $\gamma=\delta^3/2304$, and
$r=\lfloor\delta^3\lfloor n^{1/4}\rfloor/41472\rfloor$.

We apply Lemma \ref{mainpartitionlemma} with $\eta=\gamma^2/6(k-1)=\delta^6/12(2304)^2$,
which is at least $\delta^6/2^{27}$. Therefore, MDHJ$(2,d,\eta)$ is at most
$25(2^{27}\delta^{-6})^{2^d}$, and if $d\geq 10$, say, then $F(d)$ can be 
bounded above by $2\uparrow\delta^{-1}\uparrow2\uparrow2d$, where 
the symbol $\uparrow$ denotes exponentiation and $x\uparrow y\uparrow z$
means $x\uparrow(y\uparrow z)$. It follows that $F^{(2)}(d)$ is at most
$2\uparrow\delta^{-1}\uparrow 2\uparrow 2\uparrow\delta^{-1}\uparrow 2\uparrow 3d$.
(The final $3$ instead of $2$ is to (over)compensate for losing a factor of 2 earlier on in the tower.)

We may therefore take $d$ to be $\delta\log^{(6)}r$, where $\log^{(6)}$ is the
six-fold iterated logarithm. In fact, the factor of $\delta$
is unduly generous, so, bearing in mind our bound for $r$ in terms of $n$, it is 
safe to take $d$ to be $(\delta/2)\log^{(6)}n$. (Strictly speaking, we need to 
assume that $n$ is sufficiently large, but if we are generous later then this
requirement will be met by a huge margin.)

The number of iterations we need is certainly no more than $2304/\delta^3$,
but we can in fact do slightly better. It takes at most $2304/\delta^2$ iterations
for the density to increase from $\delta$ to $2\delta$. Therefore, the total number
of iterations is at most $2304\delta^{-2}(1+1/4+1/16+\dots)=3072\delta^{-2}$.
It follows that DHJ$(3,\delta)$ is bounded above by a tower of $2$s of height 
$20000\delta^{-2}$. (Since $20000>6\times 3072$, the dimension of the
space will still be vast when the iterations come to an end.) This proves
the estimate claimed in Theorem \ref{thm:our-dhj}.

\bibliographystyle{alpha}
\bibliography{dhj}

\end{document}